\newcommand{\RR}{\mathbf{R}}
\newcommand{\NN}{\mathbf{N}}
\newcommand{\cA}{\mathcal{A}}
\newcommand{\cH}{\mathcal{H}}
\newcommand{\cI}{\mathcal{I}}
\newcommand{\cL}{\mathcal{L}}
\newcommand{\cO}{\mathcal{O}}
\newcommand{\cQ}{\mathcal{Q}}
\newcommand{\cR}{\mathcal{R}}
\newcommand{\cU}{\mathcal{U}}
\DeclareMathOperator{\lspan}{span}
\DeclareMathOperator{\tr}{tr}
\DeclareMathOperator{\ind}{ind}
\DeclareMathOperator{\nul}{nul}
\DeclareMathOperator{\support}{spt}
\DeclareMathOperator{\divg}{div}
\DeclareMathOperator{\sing}{sing}
\DeclareMathOperator{\reg}{reg}
\DeclareMathOperator{\riem}{Rm}
\DeclareMathOperator{\ricc}{Ric}
\DeclareMathOperator{\sff}{\mathrm{I\!I}}
\newcommand{\indt}[1]{{\bf 1}_{#1}}
\newcommand{\restr}{\mathbin{\vrule height 1.6ex depth 0pt width
0.13ex\vrule height 0.13ex depth 0pt width 1.3ex}}
\newcommand{\eps}{\varepsilon}
\newcommand{\weaklyto}{\rightharpoonup}
\newcommand{\energyunit}{\mathfrak{e}_0}
\newcommand{\mfh}{\mathfrak{h}}
\newcommand{\mbh}{\mathbb{H}}
\newcommand{\mbi}{\mathbb{I}}
\newcommand{\mbj}{\mathbb{J}}
\newcommand{\mbk}{\mathbb{K}}
\newcommand{\mbl}{\mathbb{L}}
\theoremstyle{definition} \newtheorem{defi}{Definition}
\theoremstyle{definition} \newtheorem{rema}[defi]{Remark}
\theoremstyle{definition} \newtheorem{theo}[defi]{Theorem}
\theoremstyle{definition} 
\theoremstyle{definition} \newtheorem{coro}[defi]{Corollary}
\theoremstyle{definition} \newtheorem{lemm}[defi]{Lemma}
\theoremstyle{definition} 
\theoremstyle{definition} 
\theoremstyle{definition} 
\theoremstyle{definition} 
\theoremstyle{definition} 
\title{Variational aspects of phase transitions with prescribed mean curvature}
\author{Christos Mantoulidis}
\address{Rice University, Dept. of Mathematics, 6000 Main St, MS 136, Houston, TX 77005}
\email{christos.mantoulidis@rice.edu}
\date{}
\begin{document}

\begin{abstract}
	We study the spectrum of phase transitions with prescribed mean curvature in Riemannian manifolds. These phase transitions are solutions to an inhomogeneous semilinear elliptic PDE that give rise to diffuse objects (varifolds) that limit to hypersurfaces, possibly with singularities, whose mean curvature is determined by the ``prescribed mean curvature'' function and the limiting multiplicity. We establish upper bounds for the eigenvalues of the diffuse problem, as well as the more subtle lower bounds when the diffuse problem converges with multiplicity one. For the latter, we also establish asymptotics that are sharp to order $o(\eps^2)$ and $C^{2,\alpha}$ estimates on multiplicity-one phase transition layers.
\end{abstract}

\maketitle

\section{Introduction} \label{sec:intro}

Let $(M^n, g)$ be a Riemannian manifold. Consider the semilinear elliptic PDE
\begin{equation} \label{eq:ac.pde.h}
	\eps^2 \Delta u = W'(u) + \eps \mfh	
\end{equation}
for $\eps > 0$, smooth $u$, $\mfh : M \to \RR$, and a smooth double-well potential $W : \RR \to \RR$ satisfying:
\begin{itemize}
	\item $W(x) \geq 0$ and vanishes if and only if $x = \pm 1$,
	\item $W'(0) = 0$, $W''(0) \neq 0$, and $xW'(x) < 0$ for $x \in (0, 1)$,
	\item $W''(x) \geq \kappa > 0$ for $x \in \RR \setminus (-1+\beta,1-\beta)$ for some $\beta \in (0, 1)$, and
	\item $W(x) = W(-x)$ for all $x$;
\end{itemize}
a canonical choice is $W(x) = \tfrac14 (1-x^2)^2$. This PDE describes the Gibbs--Thomson law, and it also relates to the Van der Waals--Cahn--Hilliard theory of phase transitions (\cite{HutchinsonTonegawa00, RogerTonegawa08}). Solutions $u$ of \eqref{eq:ac.pde.h} are critical points (see Section \ref{sec:variations}) of the energy
\begin{equation} \label{eq:ac.energy.h}
	E_{\eps,\mfh}[u] := \int_M (\tfrac{\eps}{2} |\nabla u|^2 + \eps^{-1} W(u) + \mfh u) \, d\mu_g. 
\end{equation}
When $M$ is not a closed (i.e., compact, no boundary) manifold, one simply works locally in the interior of $M$. Since we are interested in variational applications to geometric problems, we will assume that $M$ is closed, except when otherwise stated. A solution $(u, \eps, \mfh)$ of \eqref{eq:ac.pde.h} with finite $E_{\eps,\mfh}$ energy gives rise to a codimension-1 diffuse varifold on $(M, g)$, whose induced Radon measure is $\eps |\nabla u|^2 \, d\mu_g$ (\cite{HutchinsonTonegawa00}). Under certain uniform bounds on our data $(u, \eps, \mfh)$, these diffuse varifolds subsequentially converge in $M$, as $\eps \to 0$, to a codimension-1 integral varifold $V$, which we call a \emph{limiting varifold} that bounds, in a certain sense, a Caccioppoli set $\Omega$, which we call a \emph{limiting enclosed domain} (\cite{HutchinsonTonegawa00, RogerTonegawa08, Guaraco}). In fact, $u \to \indt{M \setminus \bar \Omega} - \indt{\Omega}$ in the $L^1$ sense, $\{ u = 0 \} \to \support \Vert V \Vert$ in the Hausdorff sense, and $\support \Vert V \Vert$ consists of two portions:
\begin{enumerate}[label=(\Alph*)]
	\item \label{rema:intro.v.omega.odd} the portion where the density $\Theta^{n-1}(V, \cdot)$ is odd, which a.e. coincides with $\partial^* \Omega$, and
	\item \label{rema:intro.v.omega.even} the portion where the density $\Theta^{n-1}(V, \cdot)$ is even, which a.e. misses $\partial^* \Omega$.
\end{enumerate}
Here, $\partial^* \Omega$ denotes the reduced boundary of the Caccioppoli set $\Omega$. The weak mean curvature vector $\bm{H}$ of $V$ depends on the density $k = \Theta^{n-1}(V, \cdot)$. In fact,
\begin{enumerate}[label=(\alph*)]
	\item \label{rema:intro.h.odd} when $k$ is odd: $\bm{H} = -2 (k \energyunit)^{-1} \mfh \bm{n}$ a.e., with $\bm{n}$ being the unit vector pointing into the $+1$ region, $\energyunit$ being the squared $L^2$ energy of the heteroclinic solution; while,
	\item \label{rema:intro.h.even} when $k$ is even: $\bm{H} = \bm{0}$ a.e.
\end{enumerate}
As a result,  $(V; \Omega)$ is a critical point (in the sense of ambient deformations---see Section \ref{sec:variations}) of the \emph{prescribed mean curvature functional}\footnote{If $V$ is a smooth multiplicity-one hypersurface, then $A_\mfh[V; \Omega]$ measures the $(n-1)$-dimensional area of $V$ minus the bulk integral of $\mfh$ in the region $\Omega$ enclosed by $V$. Smooth multiplicity-one critical points $(V; \Omega)$ of this functional will have mean curvature equal to $2 \energyunit^{-1} \mfh$.}
\begin{equation} \label{eq:area.h.functional}
	A_{2 \energyunit^{-1} \mfh}[V; \Omega] := \Vert V \Vert(M) - \int_\Omega 2 \energyunit^{-1} \mfh.
\end{equation}
It has been shown that $k \neq 3, 5, \ldots$ unless $\mfh = 0$ (\cite{RogerTonegawa08}). Thus,
\begin{enumerate}[label=(\alph*')]
	\item \label{rema:intro.h.odd.sharp} when $k$ is odd and $\mfh \neq 0$: $k=1$ and $\bm{H} = -2 \energyunit^{-1} \mfh \bm{n}$ a.e.; while,
	\item \label{rema:intro.h.even.sharp} when $k$ is even or $\mfh = 0$: $\bm{H} = \bm{0}$ a.e.
\end{enumerate}
We focus on solutions $(u, \eps, \mfh)$ that are produced by variational methods (usually min-max). We are interested in understanding their Morse index and nullity. We briefly recall some relevant results for $\mfh \equiv 0$:
\begin{itemize}
	\item For $n \geq 3$, we know from \cite{TonegawaWickramasekera12, Guaraco, Gaspar, GasparGuaraco, Hiesmayr} that solutions $(u, \eps)$ of \eqref{eq:ac.pde.h} with uniformly bounded $E_{\eps,0}[u]$, and uniformly bounded Morse index must subsequentially collapse as $\eps \to 0$, possibly with multiplicity, to a limiting varifold $V$ that is smooth outside a set of ambient codimension $8$ and has Morse index (see Section \ref{sec:variations})
		\begin{equation} \label{eq:index.upper.old}
			\ind(V) \leq \lim_i \ind_{E_{\eps_i,0}}(u_i).
		\end{equation}
		In the reverse direction, we know from \cite{ChodoshMantoulidis:multiplicity-one} that
		\begin{equation} \label{eq:index.lower.old}
			\ind(V) + \nul(V) \geq \lim_i (\ind_{E_{\eps_i,0}}(u_i) + \nul_{E_{\eps_i,0}}(u_i))
		\end{equation}
		when $V$ is smooth with multiplicity one (in any dimension, including $n=2$).
	\item For $n=2$, the situation is subtle. First, the singular set has at most $\leq \lim_\eps \ind_{E_{\eps,0}}(u)$ points (\cite{Tonegawa05}). A finer study of the pre-limit behavior of $(u, \eps)$ shows that solutions to \eqref{eq:ac.pde.h} with bounded $E_{\eps,0}[u]$ and Morse index $\leq 1$ must collapse as $\eps \to 0$ to either smoothly embedded geodesics (possibly with multiplicity) or to  smoothly immersed geodesics with multiplicity one and a single non-embedded point that resembles an ``X;'' more generally, the total density of singular points is bounded from above by a function of $\lim_\eps \ind_{E_{\eps,0}}(u)$ (\cite{Mantoulidis}).
\end{itemize}
In this paper:
\begin{itemize}
	\item We generalize \eqref{eq:index.upper.old} and \eqref{eq:index.lower.old} to all $\mfh$. 
	\item We also establish regularity theory needed to extend the $n=2$ bullet point above to nonzero $\mfh$ in future work. 
\end{itemize}
We do the above via a sharp to order $o(\eps^2)$ understanding of $u$.

Fix a background closed Riemannian manifold $(M^n, g)$. Consider a sequence $\{ (u_i, \eps_i, \mfh_i) \}_{i=1,2,\ldots}$ such that, for a fixed $\alpha \in (0, 1)$,
\begin{equation} \label{eq:structure.assumptions}
	\eps_i > 0, \; u_i \text{ is a critical point of } E_{\eps_i, \mfh_i}, \; \lim_i \eps_i = 0, \; \sup_i E_{\eps_i,\mfh}(u_i) + \Vert \mfh_i \Vert_{C^{3,\alpha}(M)} < \infty.
\end{equation}
Recall that, after passing to a subsequence, the diffuse $(n-1)$-varifolds associated with $(u_i, \eps_i)$ converge (as $i \to \infty$) to a limiting integral $(n-1)$-varifold, $V$, and a limiting enclosed domain, $\Omega$. After possibly passing to a further subsequence, $\mfh_i$ converge in $C^{3,\alpha'}$ with $\alpha' \in (0,1)$ to $\mfh \in C^{3,\alpha}(M)$. In what follows, always assume that we pass to subsequences as necessary:

\begin{theo} \label{theo:higher.dim}
	Suppose $n \geq 2$, $\alpha \in (0, 1)$. Let $\{(u_i, \eps_i, \mfh_i)\}_{i=1,2,\ldots}$ be as in \eqref{eq:structure.assumptions}. Let $V$, $\Omega$ be as before, and $\mfh = \lim_i \mfh_i$.
	\begin{enumerate}[label=(\alph*)]
		\item Suppose that $\sup_i \ind_{E_{\eps_i,\mfh_i}}(u_i) < \infty$. We may estimate the Morse index of $(V; \Omega)$ by
			\begin{equation} \label{eq:higher.dim.index.upper.h}
				\ind_{A_{2\energyunit^{-1} \mfh}}(V; \Omega) \leq \lim_i \ind_{E_{\eps_i,\mfh_i}}(u_i).
			\end{equation}
			See Section \ref{sec:variations} for notation and Theorem \ref{theo:index.upper} for a stronger result with weaker (i.e., $W^{2,p}$ rather than $C^{3,\alpha}$) assumptions on $\mfh_i$.
		\item Suppose $U \subset M$ is an open set such that $\bar U \cap \sing V = \emptyset$, $\Gamma' := \support \Vert V \Vert \cap U$ is an embedded $(n-1)$-manifold, $\bar \Gamma' \setminus \Gamma'$ is smooth, and $\Theta^{n-1}(V, \cdot) \equiv 1$ along $\Gamma'$. Then, for all $\tau \in (-1, 1)$, there exists an open set $U' \subset M$ containing $\Gamma'$ such that $\Gamma' \cap U'$ is the $C^{2,\alpha}$ limit of $\Gamma_{i,\tau}' := \{ u_i = \tau \} \cap U'$ as $i \to \infty$. The mean curvature $H_{i,\tau}$ of $\Gamma_{i,\tau}'$ satisfies
			\begin{equation} \label{eq:higher.dim.mean.curv}
				\Vert H_{i,\tau} - 2 \energyunit^{-1} \mfh_i \Vert_{C^0(\Gamma_{i,\tau}')} + \eps^\alpha [H_{i,\tau} - 2 \energyunit^{-1} \mfh_i ]_{\alpha,\Gamma_{i,\tau}'} = O(\eps).
			\end{equation}
		\item If $\Theta^{n-1}(V, \cdot) \equiv 1$ everywhere on $\support \Vert V \Vert$, then we may estimate the Morse index and nullity of $(V; \Omega)$ by
			\begin{equation} \label{eq:higher.dim.index.lower.h}
				\ind_{A_{2\energyunit^{-1}\mfh_i}}(V; \Omega) + \nul_{A_{2\energyunit^{-1} \mfh_i}}(V; \Omega) \geq \lim_i (\ind_{E_{\eps_i,\mfh_i}}(u_i) + \nul_{E_{\eps_i,\mfh_i}}(u_i)).
			\end{equation}
			See Section \ref{sec:variations} for notation and Theorem \ref{theo:index.lower} and Corollary \ref{coro:index.lower} for more general results.
	\end{enumerate}
\end{theo}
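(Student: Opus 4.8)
\emph{Proof strategy.} The plan is to prove \textbf{(b)} first, as it is the analytic engine, and then deduce \textbf{(a)} and \textbf{(c)} by transplanting variations between the diffuse functional $E_{\eps,\mfh}$ and the prescribed mean curvature functional $A := A_{2\energyunit^{-1}\mfh}$, adapting the lift/project scheme of \cite{Guaraco, Hiesmayr} (for the upper bound) and \cite{ChodoshMantoulidis:multiplicity-one} (for the lower bound) from the case $\mfh\equiv 0$. The one genuinely new feature throughout is that the transverse profile of $u_i$ is the one-dimensional heteroclinic $\mbh$ (solving $\mbh''=W'(\mbh)$, $\mbh(\pm\infty)=\pm1$, $\int_\RR(\mbh')^2=\energyunit$) \emph{perturbed} by the inhomogeneity, and this perturbation is exactly what converts the minimal-surface Jacobi form into the PMC Jacobi form
\[ Q_A(\varphi) = \int_\Gamma |\nabla_\Gamma\varphi|^2 - \big(|\sff_\Gamma|^2 + \ricc(\bm n,\bm n) + 2\energyunit^{-1}\,\partial_{\bm n}\mfh\big)\varphi^2, \qquad \Gamma:=\reg V, \]
so the bookkeeping of $\mfh_i$ must be carried through every expansion.

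\emph{Part (b).} Work in Fermi coordinates $(r,y)$ for $\Gamma'$ over a suitable $U'$, with $r$ the signed distance to $\{u_i=0\}$; multiplicity one together with Tonegawa--Wickramasekera sheet regularity (\cite{TonegawaWickramasekera12, Guaraco}) guarantees that for $i$ large $\{u_i=0\}\cap U'$ is a single smooth graph over $\Gamma'$ with $C^1$ norm $\to 0$ and that $u_i$ is transversally monotone. Posit the matched expansion $u_i(r,y)=\mbh(r/\eps_i)+\eps_i v_i(r/\eps_i,y)+O(\eps_i^2)$, substitute into $\eps^2\Delta u=W'(u)+\eps\mfh$, and collect orders: at $O(1)$ one recovers $\mbh''=W'(\mbh)$, while $O(\eps)$ gives $L v_i = \mbh'(t)\,\Delta r - \mfh_i$ with $t=r/\eps_i$ and $L:=-\partial_{tt}+W''(\mbh)$. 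Since $\ker L=\langle\mbh'\rangle$, the Fredholm (solvability) condition --- pair with $\mbh'$, use $\Delta r|_{\{r=\eps t\}}=H_{i}+O(\eps)$ and $\int_\RR\mbh'=2$, $\int_\RR(\mbh')^2=\energyunit$ --- forces $H_i\,\energyunit = 2\mfh_i+O(\eps)$, which is the $C^0$ part of \eqref{eq:higher.dim.mean.curv}; the $C^\alpha$ part and the $C^{2,\alpha}$ convergence $\Gamma'_{i,\tau}\to\Gamma'\cap U'$ follow from uniform (in $i$) interior Schauder estimates for $u_i$ --- which use $\sup_i\|\mfh_i\|_{C^{3,\alpha}}<\infty$ --- together with $\partial_r u_i\ne0$ near the level set and the implicit function theorem, passing to the limit by Arzel\`a--Ascoli in $C^{2,\alpha'}$, $\alpha'<\alpha$. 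The delicate point is that these estimates must be \emph{uniform up to} the possible edge $\bar\Gamma'\setminus\Gamma'$; one arranges this by shrinking $U'$ so that it deformation-retracts onto $\Gamma'$ and noting that the single-sheet structure persists there.

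\emph{Part (a).} Suppose for contradiction $\ind_A(V;\Omega)\ge m+1$, where $m:=\lim_i\ind_{E_{\eps_i,\mfh_i}}(u_i)$, and pick an $(m+1)$-dimensional space $\mathcal W$ of normal fields whose support is a compact subset of $\reg V$ on which $Q_A<0$. Since that support avoids $\sing V$, part (b) applies on a neighborhood of it, so each $\varphi\in\mathcal W$ may be transplanted to a diffuse test function $\phi_{i,\varphi}\approx\zeta(r_i)\,\tilde\varphi\,\partial_r u_i$ ($\zeta$ a cutoff, $\tilde\varphi$ a normal extension of $\varphi$); the divergent parts of $\delta^2 E_{\eps_i,\mfh_i}(u_i)[\phi_{i,\varphi}]$ cancel because translating the heteroclinic is a null direction, and the finite remainder converges, after the natural normalization, to $\energyunit\,Q_A(\varphi)$. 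Equivalently: the minimal-profile ``reduced energy'' of a nearby hypersurface $\Sigma$ equals $\energyunit\, A[\Sigma;\Omega_\Sigma]$ up to a constant and $o(1)$, so its Hessian is $\energyunit\,Q_A$, automatically carrying the $\partial_{\bm n}\mfh$ term. Then for $i$ large $\{\phi_{i,\varphi}:\varphi\in\mathcal W\}$ is an $(m+1)$-dimensional space on which $\delta^2 E_{\eps_i,\mfh_i}(u_i)<0$, contradicting $\ind_{E_{\eps_i,\mfh_i}}(u_i)=m$; the $W^{2,p}$ version (Theorem \ref{theo:index.upper}) only changes the transplantation, replacing pointwise curvature and $\mfh$ bounds by integral ones.

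\emph{Part (c).} Now $\Theta^{n-1}(V,\cdot)\equiv1$ everywhere, so (b) holds on every compact subset of $\reg V=\support\Vert V\Vert\setminus\sing V$, while $\sing V$ is $\hdmsr^{n-1}$-null and of codimension $\ge 7$ in $\Gamma$, hence removable/capacity-negligible for $Q_A$. Suppose for contradiction $\lim_i(\ind+\nul)_{E_{\eps_i,\mfh_i}}(u_i)\ge \ind_A(V;\Omega)+\nul_A(V;\Omega)+1=:d$; then for $i$ large there is a $d$-dimensional $\mathcal K_i\subset W^{1,2}(M)$ with $\delta^2 E_{\eps_i,\mfh_i}(u_i)\le0$ on $\mathcal K_i$. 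Introduce the profile/projection map $\Pi_i:\phi\mapsto$ (the transverse average of $\phi\,\partial_r u_i$ across the level sets of $u_i$), which lands in normal fields on $\Gamma'_{i,0}$, identified with fields on $\Gamma$ via (b). Two estimates drive the contradiction: (i) a transverse spectral gap --- the operator $-\eps^2\partial_{rr}+W''(u_i)$ across the layer has lowest eigenvalue $o(1)$ in the relevant (Rayleigh-quotient) units, with eigenfunction $\approx\partial_r u_i$, and the next one $\ge c/\eps_i$ --- which yields $\delta^2 E_{\eps_i,\mfh_i}(u_i)[\phi]\ge c\eps_i^{-1}\|\phi - (\text{mode }0)\|_{L^2}^2$; and (ii) the mode-$0$ contribution, suitably normalized, is bounded below by $Q_A(\Pi_i\phi)-o(1)\|\Pi_i\phi\|^2$, the $\mfh$-terms of $Q_A$ once more appearing via the reduced-energy picture and (b). Combining (i)--(ii), $\Pi_i$ is asymptotically injective on $\mathcal K_i$ and carries it, in the limit, to a $d$-dimensional subspace of $W^{1,2}(\Gamma)$ on which $Q_A\le0$ --- contradicting that $\ind_A+\nul_A$ is the maximal dimension of a $Q_A$-nonpositive subspace. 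The main obstacle --- and the reason multiplicity one is indispensable --- is exactly ensuring \emph{no loss of dimension} under $\Pi_i$ and controlling the interaction of $\mathcal K_i$ with the gluing/transition regions and with $\sing V$; here we follow \cite{ChodoshMantoulidis:multiplicity-one} closely, the genuinely new work being to check that the inhomogeneous term $\mfh_i$, carried through (i) and (ii), reproduces precisely the PMC Jacobi form $Q_A$ (with its $2\energyunit^{-1}\partial_{\bm n}\mfh$ potential and the $H=2\energyunit^{-1}\mfh$ identity from (b)) rather than the minimal one.
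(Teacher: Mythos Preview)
Your overall scheme is aligned with the paper's: (a) goes via test functions of the form $\nabla u\cdot\bm X$ (the paper packages this as an \emph{inner variation} of $E_{\eps,\mfh}$ along the flow of $\bm X$, following \cite{Gaspar}), (b) via a matched expansion about the heteroclinic and a Fredholm/projection argument (Section~\ref{sec:mult.one}), and (c) via a transverse spectral gap plus projection onto the lowest mode (Section~\ref{sec:index.lower}). Two points, however, are genuine gaps.

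\textbf{Part (b): the gradient lower bound.} Your matched expansion presupposes that $\{u_i=0\}\cap U'$ is a single $C^1$ graph with $\partial_r u_i\neq 0$. Tonegawa--Wickramasekera regularity concerns the \emph{limit} varifold; it does not by itself yield the pre-limit statement $\eps_i|\nabla u_i|\geq c_0^{-1}$ on $\{|u_i|<1-\beta_0\}$, which is exactly what starts the machinery (assumption~\eqref{eq:mult.one.assumptions.ii}, driving \eqref{eq:mult.one.assumptions.iii} and the Fermi setup). The paper obtains this by a blow-up: if $\eps_i|\nabla u_i(p_i)|\to 0$ at points $p_i$ in the transition region, rescale about $p_i$ to get an entire solution of $\Delta u=W'(u)$ on $\RR^n$ with $\nabla u(\bm 0)=\bm 0$ and density~$1$ at infinity (from monotonicity together with the multiplicity-one hypothesis on $V$); by Wang's Allard-type theorem \cite{Wang:Allard} such a solution is a rotated heteroclinic, contradicting $\nabla u(\bm 0)=\bm 0$. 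Without this step your expansion has no a~priori starting point.

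\textbf{Part (a): you cannot invoke (b).} Part~(a) carries \emph{no} multiplicity-one hypothesis, so on the support of your negative subspace $\mathcal W\subset C^1_c(\reg V; N\reg V)$ the density of $V$ may well be $2,3,\ldots$, and then (b) simply does not apply there. The paper's proof (Theorem~\ref{theo:index.upper}) avoids any sheet structure: it computes $\tfrac{d^2}{dt^2}E_{\eps,\mfh}[u\circ\Phi^{-t}]|_{t=0}$ directly and passes to the limit using only the varifold/Caccioppoli convergence of \cite{HutchinsonTonegawa00,RogerTonegawa08}, arriving at \eqref{eq:second.variation.ac.limit} with an explicit nonnegative remainder $\int(\nabla_{\bm n}\bm X\cdot\bm n)^2\,d\Vert V\Vert$ that one kills by choosing the normal extension with $\nabla_{\bm n}\bm X=\bm 0$ along $\reg V$. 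Your test functions $\zeta\tilde\varphi\,\partial_r u_i$ are, up to cutoff, exactly $\nabla u_i\cdot\bm X$, so this route is open to you --- but the justification must go through the inner-variation/varifold computation, not through (b).

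\textbf{Part (c), minor.} Under $\Theta^{n-1}(V,\cdot)\equiv 1$ the paper observes that $\support\Vert V\Vert$ is \emph{smooth} by Allard's theorem (bounded first variation and unit density), so $\sing V=\emptyset$; your capacity argument is unnecessary, and the ``codimension $\geq 7$'' claim is in any case specific to $\mfh\equiv 0$ (cf.\ Remark~\ref{rema:stability.sing}). Also, for the PMC Jacobi form to emerge correctly from the mode-$0$ reduction, the paper pushes the expansion of $u$ to order $o(\eps^2)$ (introducing the auxiliary ODE solutions $\mbi,\mbj,\mbk,\mbl$ of \eqref{eq:mult.one.i.ode}--\eqref{eq:mult.one.l.ode}) and uses the corrected projection kernel $\overline\mbh_\eps'+\eps\mfh\,\overline\mbi_\eps'$ rather than $\overline\mbh_\eps'$ alone. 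Your projection against $\partial_r u_i$ is morally the same object to the order needed, but the $o(\eps^2)$ control is essential for Lemma~\ref{lemm:index.lower.wpar} and is not contained in your $O(\eps)$ sketch of~(b).
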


The regularity theory developed for Theorem \ref{theo:higher.dim} (b) and (c) is an adaptation to arbitrary $\mfh$ of the Wang--Wei curvature estimates for transition layers when $\mfh \equiv 0$ \cite{WangWei} (see also \cite{WangWei2, ChodoshMantoulidis:multiplicity-one}). We are not ``generalizing'' the Wang--Wei estimates to all $\mfh$ because we only need and only prove estimates for so-called  multiplicity-one solutions. The multiplicity-one estimates we present here are quite direct versus the general curvature estimates of \cite{WangWei, WangWei2, ChodoshMantoulidis:multiplicity-one} that require substantial work. We have opted for a presentation that is as simple and self-contained as possible, so some of our smoothness assumptions are more stringent than necessary. We hope our streamlined exposition will help make the groundbreaking recent Wang--Wei regularity theory accessible to a wider audience.

We list a number of interesting future directions and unresolved questions in Section \ref{sec:open}. We encourage the reader to refer to Remarks \ref{rema:index.upper.known}, \ref{rema:mult.one}, \ref{rema:index.lower.known} for additional context on our results and how they fit within the literature. This work has been partially influenced and motivated by the enormous recent advances of Almgren--Pitts min-max theory, which the min-max theory of \eqref{eq:ac.energy.h} has been tracking in a somewhat parallel fashion. Since Almgren--Pitts theory is not so relevant to this work, we will only list a small number of results that we deem most relevant: \cite{KetoverLiokumovich:curves, MarquesNeves:index, MarquesNeves:multiplicity-one, Zhou, ZhouZhu:pmc, ZhouZhu:cmc}.

\textbf{Acknowledgments} The author would like to acknowledge Constante Bellettini, Otis Chodosh, and Xin Zhou for helpful conversations on constant mean curvature hypersurfaces. The author was supported in part by NSG Grant No. DMS-1905165/2050120/2147521.

\section{Variations of $A_\mfh$, $E_{\eps,\mfh}$}

\label{sec:variations}

\subsection{The $A_\mfh$ functional} \label{sec:variations.h}

Suppose that $\Sigma^{n-1}$ is a closed hypersurface in a closed Riemannian manifold $(M^n, g)$. The first variation formula gives the first order rate of change of the $(n-1)$-dimensional area $A[\cdot]$ of $\Sigma$ if $\Sigma$ is deformed in the direction given by an ambient vector field. Let $\bm{X}$ be a $C^1$ vector field on $M$ whose flow is given by $\Phi^t : M \to M$. The first variation of the area of $\Sigma$ along $\bm{X}$ is
\begin{equation} \label{eq:first.variation.smooth}
	\delta A[\Sigma]\{\bm{X}\} := \left[ \tfrac{d}{dt} A[(\Phi^t)_{\#} \Sigma] \right]_{t=0} = \int_\Sigma \divg_\Sigma \bm{X} \, d\mu_\Sigma.
\end{equation}
The second order rate of change of area along a $C^2$ vector field $\bm{X}$ with flow $\Phi^t : M \to M$ is given by the second variation:
\begin{align} \label{eq:second.variation.smooth}
	\delta^2 A[\Sigma]\{\bm{X}, \bm{X}\} 
		& := \left[ \tfrac{d^2}{dt^2} A[(\Phi^t)_{\#} \Sigma] \right]_{t=0} \nonumber \\
		& = \int_\Sigma \big[ \divg_\Sigma \nabla_{\bm{X}} \bm{X} + (\divg_\Sigma \bm{X})^2 + |\nabla_\Sigma^\perp \bm{X}|^2 \nonumber \\
		& \qquad - \sum_{i,j=1}^{n-1} (\nabla_{\bm{\tau}_i} \bm{X} \cdot \bm{\tau}_j) (\nabla_{\bm{\tau}_j} \bm{X} \cdot \bm{\tau}_i) - \sum_{i=1}^{n-1} \riem(\bm{X}, \bm{\tau}_i, \bm{\tau}_i, \bm{X}) \big] \, d\mu_\Sigma.
\end{align}
In \eqref{eq:second.variation.smooth}, $(\bm{\tau}_i)_{i=1,\ldots,n-1}$ gives an orthonormal frame for $T\Sigma$ at each point, $\riem(\bm{X}, \bm{\tau}_i, \bm{\tau}_i, \bm{X})$ is the sectional curvature (suitably scaled) of $(M, g)$ along $\bm{X} \wedge \bm{\tau}_i$, and $\nabla_\Sigma^\perp \bm{X}$ denotes the orthogonal component of $\nabla_\Sigma \bm{X}$. See \cite{Simon83} for a derivation of these identities in the Euclidean setting; the Riemannian modifications are straightforward. 

Note that $\delta A[\Sigma]\{\bm{X}\}$ depends only on $\bm{X}|_\Sigma$, while $\delta^2 A[\Sigma]\{\bm{X}, \bm{X}\}$ also depends on the behavior of $\bm{X}$ off $\Sigma$ (by virtue of the $\divg_\Sigma \nabla_{\bm{X}} \bm{X}$ term).

A hypersurface $\Sigma$ is said to be a critical point of the area functional if the first order rate of change $\delta A[\Sigma]\{\bm{X}\}$ is zero for all $C^1$ vector fields $\bm{X}$. Using the formula $\bm{H} = - (\divg_\Sigma \bm{n})\bm{n}$ in \eqref{eq:first.variation.smooth}, and integration by parts on the tangential component of $\bm{X}$, shows that the criticality condition is equivalent to $\bm{H} \equiv \bm{0}$ on $\Sigma$. For critical points, the second variation reduces to:
\begin{align} \label{eq:second.variation.smooth.critical}
	\delta^2 A[\Sigma]\{\bm{X}, \bm{X}\} 
		& = \int_\Sigma \big[ (\divg_\Sigma \bm{X})^2 + |\nabla_\Sigma^\perp \bm{X}|^2 \nonumber \\
		& \qquad - \sum_{i,j=1}^{n-1} (\nabla_{\bm{\tau}_i} \bm{X} \cdot \bm{\tau}_j) (\nabla_{\bm{\tau}_j} \bm{X} \cdot \bm{\tau}_i) - \sum_{i=1}^{n-1} \riem(\bm{X}, \bm{\tau}_i, \bm{\tau}_i, \bm{X}) \big] \, d\mu_\Sigma.
\end{align}
Note how, when $\Sigma$ is a critical point, $\delta^2 A[\Sigma]\{\bm{X}, \bm{X}\}$ only depends on $\bm{X}|_\Sigma$ and makes sense for $C^1$ (rather than $C^2$) vector fields $\bm{X}$.

We now consider the more general $\mfh$-area functional (or, the prescribed mean curvature $\mfh$ functional), $A_\mfh[\cdot]$, where $\mfh : M \to \RR$ is a fixed ambient function. We also require that $\Sigma$ bounds a domain $\Omega$. We set:
\[ A_\mfh[\Sigma; \Omega] := A[\Sigma] - \int_\Omega \mfh \, d\mu_g. \]
Then, for any vector field $C^1$ vector field $\bm{X}$ with flow $\Phi^t : M \to M$, the first variation of $A_\mfh[\cdot]$ is easily derived from \eqref{eq:first.variation.smooth} and the divergence theorem to be:
\begin{equation} \label{eq:first.variation.h.smooth}
	\delta A_\mfh[\Sigma; \Omega]\{\bm{X}\} := \left[ \tfrac{d}{dt} A_\mfh[(\Phi^t)_{\#} \Sigma; (\Phi^t)_{\#} \Omega] \right]_{t=0} = \int_\Sigma \divg_\Sigma \bm{X} \, d\mu_\Sigma - \int_\Sigma \mfh \bm{X} \cdot \bm{n} \, d\mu_\Sigma,
\end{equation}
where $\bm{n}$ is the unit normal to $\Sigma$ that points outside of $\Omega$. Despite presence of the bulk term in the definition of $A_\mfh$, we see that \eqref{eq:first.variation.h.smooth} also only depends on $\bm{X}|_\Sigma$, like in \eqref{eq:first.variation.smooth}.

As before, we say that $(\Sigma; \Omega)$ is a critical point of $A_\mfh[\cdot]$ if $\delta A_\mfh[\Sigma; \Omega]\{\bm{X}\} = 0$ for all vector fields $\bm{X}$. An integration by parts and the use of $\bm{H} = - (\divg_\Sigma \bm{n}) \bm{n}$ in \eqref{eq:first.variation.h.smooth} shows that criticality is now equivalent to the mean curvature vector of $\Sigma$ satisfying $\bm{H}= -(\mfh|_\Sigma) \bm{n}$. 

We state the second variation of $A_\mfh$ for critical points $(\Sigma; \Omega)$. If $\bm{X}$ denotes a $C^1$ vector field with $\bm{X}|_\Sigma \perp \Sigma$ and flow $\Phi^t : M\to M$, then the second variation of $A_\mfh$ along $\bm{X}$ is:
\begin{align} \label{eq:second.variation.h.smooth}
	\delta^2 A_\mfh[\Sigma; \Omega]\{\bm{X}, \bm{X}\} 
		& := \left[ \tfrac{d^2}{dt^2} A_\mfh[(\Phi^t)_{\#} \Sigma; (\Phi^t)_{\#} \Omega] \right]_{t=0} \nonumber \\
		& = \int_\Sigma \big[ (\divg_\Sigma \bm{X})^2 + |\nabla_\Sigma^\perp \bm{X}|^2 \nonumber \\
		& \qquad - \sum_{i,j=1}^{n-1} (\nabla_{\bm{\tau}_i} \bm{X} \cdot \bm{\tau}_j) (\nabla_{\bm{\tau}_j} \bm{X} \cdot \bm{\tau}_i) - \sum_{i=1}^{n-1} \riem(\bm{X}, \bm{\tau}_i, \bm{\tau}_i, \bm{X}) \nonumber \\
		& \qquad - (\nabla_{\bm{X}} \mfh)(\bm{X} \cdot \bm{n}) - \mfh (\bm{X} \cdot \bm{n}) \divg_\Sigma \bm{X} \big] \, d\mu_\Sigma.
\end{align}
This follows from \eqref{eq:second.variation.smooth} and the derivative of the flux term (i.e., the $\mfh$ term) in \eqref{eq:first.variation.h.smooth}. Note that we are restricting to $\bm{X}$ that are normal to $\Sigma$, but this is done without loss of generality: the tangential component of $\bm{X}$ only flows $\Sigma$ by self-diffeomorphisms so it has no effect on the area of $\Sigma$ or on the bulk integral in the definition of $A_\mfh$. It is also convenient to rewrite \eqref{eq:second.variation.h.smooth} in scalar notation, where we write $\bm{X} = f \bm{n}$ for some scalar valued function $f : \Sigma \to \RR$:
\begin{align} \label{eq:second.variation.h.smooth.scalar}
	\delta^2 A_\mfh[\Sigma; \Omega]\{f \bm{n}, f \bm{n}\} 
		& = \int_\Sigma \big[ |\nabla_\Sigma f|^2 - (|\sff_\Gamma|^2 + \ricc(\bm{n}, \bm{n}) + \tfrac{\partial}{\partial \bm{n}} \mfh) f^2\big] \, d\mu_\Sigma.
\end{align}
Note that \eqref{eq:second.variation.h.smooth.scalar} is the bilinear form associated with an elliptic operator on $\Sigma$. Since $\Sigma$ is closed, this operator naturally comes with a spectrum, which can be recovered by the well-known min-max characterization. Namely, for each $\ell \in \NN$, the $\ell$-th eigenvalue of $\delta^2 A_\mfh[\Sigma; \Omega]$ is given by:
\begin{multline} \label{eq:spectrum.h.smooth}
	\lambda_\ell(\delta^2 A_\mfh[\Sigma; \Omega]) := \inf \Big\{ \max \Big\{ \frac{\delta^2 A_\mfh[\Sigma; \Omega]\{\bm{X}, \bm{X}\}}{\int_\Sigma |\bm{X}|^2 \, d\mu_\Sigma} : \bm{X} \in F \setminus \{0\} \Big\} \\
		: F \text{ is an } \ell\text{-dimensional subspace of } C^1(\Sigma; N\Sigma) \Big\}.
\end{multline}
Here, $N\Sigma$ denotes the normal bundle of $\Sigma$ in $M$. Given \eqref{eq:spectrum.h.smooth}, one defines the $A_\mfh$ Morse index and nullity of $(\Sigma; \Omega)$ as follows:
\begin{align}
	\ind_{A_\mfh}(\Sigma; \Omega) & := \# \{ \ell \in \NN : \lambda_\ell(\delta^2 A_\mfh[\Sigma; \Omega]) < 0 \}, \label{eq:index.h.smooth} \\
	\nul_{A_\mfh}(\Sigma; \Omega) & := \# \{ \ell \in \NN : \lambda_\ell(\delta^2 A_\mfh[\Sigma; \Omega]) = 0 \}. \label{eq:nullity.h.smooth}
\end{align}
When $\mfh \equiv 0$, $\Omega$ is irrelevant, and we simply denote these quantities by $\ind(\Sigma)$, $\nul(\Sigma)$.

For the purposes of Section \ref{sec:index.upper}, we need to extend these notions to the nonsmooth setting of \cite{HutchinsonTonegawa00, RogerTonegawa08}. Our hypersurface $\Sigma^{n-1}$ will be replaced by an integral $(n-1)$-varifold $V$ (i.e., a countably rectifiable set with a.e. integer density), and the domain $\Omega$ enclosed by $\Sigma$ will get replaced by a Caccioppoli set (i.e., a set of finite perimeter), still labeled $\Omega$. We point the reader to \cite{Simon83} for a discussion of these objects from geometric measure theory.

If $V$ is an integral $(n-1)$-varifold and $\Omega$ is a Caccioppoli set, we define $A_\mfh[V; \Omega]$ as:
\[ A_\mfh[V; \Omega] := \Vert V \Vert(M) - \int_\Omega \mfh \, d\mu_g. \]
Then, the first variation of $A_\mfh[V; \Omega]$ by an ambient $C^1$ vector field $\bm{X}$ with flow $\Phi^t : M \to M$ is given by:
\begin{equation} \label{eq:first.variation.h.sing}
	\delta A_\mfh[V; \Omega]\{\bm{X}\} := \left[ \tfrac{d}{dt} A_\mfh[(\Phi^t)_{\#} V; (\Phi^t)_{\#} \Omega] \right]_{t=0} = \int \divg_V \bm{X} \, d\Vert V \Vert - \int \mfh \bm{X} \cdot \bm{n} \, d(\cH_g^{n-1} \restr \partial^* \Omega).
\end{equation}
A pair $(V; \Omega)$ is said to be a critical point for $A_\mfh$ if $\delta A_\mfh[V; \Omega]\{\bm{X}\} = 0$ for all $C^1$ vector fields $\bm{X}$ on $M$. The relevance of this definition to our work is that, by \cite{HutchinsonTonegawa00, RogerTonegawa08}: 
\begin{equation} \label{eq:hutchinson.tonegawa.critical}
\begin{aligned}
	\text{limiting objects } (V; \Omega) \text{ that come from solutions } (u_i,  \eps_i) \text{ of } \eqref{eq:ac.pde.h} \text{ with } \eps_i \to 0 \\
	\text{ and uniform bounds on } E_{\eps_i,\mfh}[u_i] 	\text{ are critical points of } A_{2\energyunit^{-1} \mfh}[V; \Omega].
\end{aligned}
\end{equation}
For critical points $(V; \Omega)$ of $A_\mfh[\cdot]$, and vector fields that are compactly supported away from the singular part of $V$, the second variation along a $C^1$ vector field $\bm{X}$ which is $\perp$ to $V$ and has flow $\Phi^t : M \to M$ is:
\begin{align} \label{eq:second.variation.h.thin.sing}
	\delta^2 A_\mfh[V; \Omega]\{\bm{X}, \bm{X}\} 
		& := \left[ \tfrac{d^2}{dt^2} A_\mfh[(\Phi^t)_{\#} V; (\Phi^t)_{\#} \Omega] \right]_{t=0} \nonumber \\
		& = 	\int \big[ (\divg_V \bm{X})^2 + |\nabla_V^\perp \bm{X}|^2 \nonumber \\
		& \qquad - \sum_{i,j=1}^{n-1} (\nabla_{\bm{\tau}_i} \bm{X} \cdot \bm{\tau}_j) (\nabla_{\bm{\tau}_j} \bm{X} \cdot \bm{\tau}_i) - \sum_{i=1}^{n-1} \riem(\bm{X}, \bm{\tau}_i, \bm{\tau}_i, \bm{X}) \big] \, d\Vert V \Vert \nonumber \\
		& - \int \big[ (\nabla_{\bm{X}} \mfh)(\bm{X} \cdot \bm{n}) + \mfh (\bm{X} \cdot \bm{n}) \divg_\Sigma \bm{X} \big] \, d(\cH^{n-1}_g \restr \partial^* \Omega).
\end{align}
By analogy with \eqref{eq:spectrum.h.smooth}, \eqref{eq:index.h.smooth}, \eqref{eq:nullity.h.smooth} we define, for any open $\cO \subset M \setminus \sing V$ and $\Sigma := \reg V$:
\begin{multline*}
	\lambda_\ell(\delta^2 A_\mfh[V; \Omega]; \cO) := \inf \Big\{ \max \Big\{ \frac{\delta^2 A_\mfh[\Sigma; \Omega]\{\bm{X}, \bm{X}\}}{\int |\bm{X}|^2 \, d\Vert V \Vert} : \bm{X} \in F \setminus \{\bm{0}\} \Big\} \\
	: F \text{ is an } \ell\text{-dimensional subspace of } C^1_c(\Sigma \cap \cO; N(\Sigma \cap \cO)) \Big\}.
\end{multline*}
and
\begin{align} 
	\lambda_\ell(\delta^2 A_\mfh[V; \Omega]) & := \inf \Big\{ \lambda_\ell(\delta^2 A_\mfh[V; \Omega]; \cO) : \cO \subset M \setminus \sing V \text{ is open} \Big\}, \label{eq:spectrum.h.sing} \\
	\ind_{A_\mfh}(V; \Omega) & := \# \{ \ell \in \NN : \lambda_\ell(\delta^2 A_\mfh[V; \Omega]) < 0 \}, \label{eq:index.h.sing} \\
	\nul_{A_\mfh}(V; \Omega) & := \# \{ \ell \in \NN : \lambda_\ell(\delta^2 A_\mfh[V; \Omega]) = 0 \}.\label{eq:nullity.h.sing}
\end{align}

\begin{rema} \label{rema:stability.sing}
	This approach to measuring the linear stability of $(V; \Omega)$ restricts to variations $\bm{X}$ that \emph{fix} the singular set of $V$. This is how one usually defines the index of non-smooth $V$. Are we potentially underestimating the index by only allowing deformations that fit $\sing V$? This depends on the ``size'' of $\sing V$.
	\begin{enumerate}
		\item \label{rema:stability.sing.small} When $\cH^{n-3}_g(\sing V) < \infty$, a cutoff argument of Federer--Ziemer (\cite[p. 89]{Wickramasekera:cutoff}) shows that the stability of $V$ is accurately captured by restricting to variations which are compactly supported away from $\sing V$.
		\item \label{rema:stability.sing.large} When the cutoff argument above fails (e.g., when $\cH^{n-3}_g(\sing V) = \infty$) we are legitimately in danger of underestimating the index. 
	\end{enumerate}
	Here is what is known about whether we fall under \eqref{rema:stability.sing.small} or \eqref{rema:stability.sing.large} when $V$ occurs as the limit of $(u_i, \eps_i, \mfh_i)$:
	\begin{enumerate}[resume]
		\item \label{rema:stability.sing.zero.h} When $n \geq 3$, $\mfh_i \equiv 0$ and we have uniform bounds on $\ind_{E_{\eps_i, 0}}(u_i)$, $\sing V$ has ambient codimension $\geq 8$ (and is empty for $n = 3, \ldots, 7$) and we are thus in the situation of the first bullet point.  This was established in \cite{TonegawaWickramasekera12, Guaraco, Hiesmayr}, crucially relying on the regularity theory for stable integral varifolds of codimension-1 (\cite{Wickramasekera14}). 
		\item \label{rema:stability.sing.nonzero.h} When $n \geq 3$ and we have uniform bounds on $\ind_{E_{\eps_i, \mfh_i}}(u_i)$ and  mild non-negativity on $\mfh_i$, it was recently shown in \cite{BellettiniWickramasekera:ac} (crucially relying on regularity theory for stable CMC integral varifolds of codimension-1 from \cite{BellettiniChodoshWickramasekera:cmc, BellettiniWickramasekera:cmc}) that $\sing V$ consists of two portions:
			\begin{itemize}
				\item the excisable codimension $\geq 8$ portion that was already present for $\mfh \equiv 0$; and
				\item a portion that consists of ``geometric'' singularities where multiplicity-one sheets of $V$ touch other multiplicity-one sheets of $V$ or other minimal (even-multiplicity) sheets of $V$.
			\end{itemize}
			The latter portion is contained in a countable union of ambient codimension-2 submanifolds, so it may or may not break the finiteness of $\cH^{n-3}_g(\sing V)$. It is an interesting problem to understand the finer structure of the geometric singular set. 
		\item \label{rema:stability.sing.n2} When $n=2$, the limit $V$ has a singular set that consists of isolated points (unless $\ind_{E_{\eps_i, \mfh_i}}(u_i)$ is identically zero, in which case $\sing V$ is empty; see \cite{Tonegawa05}). When $\sing V$ is nonempty, the stability of $\reg V$ does not guarantee the stability of $V$. This is an interesting situation that is to be addressed in separate work.
	\end{enumerate}
\end{rema}

\subsection{The $E_{\eps,\mfh}$ functional}

\label{sec:variations.ac}

Let $(M^n, g)$ be a closed Riemannian manifold. It is easy to see that the first second variation of the energy functional in \eqref{eq:ac.energy.h} along a direction $v \in C^1(M)$ is 
\begin{equation} \label{eq:first.variation.ac}
	\delta E_{\eps,\mfh}[u] \{ v \} := \left[ \tfrac{d}{dt} E_{\eps,\mfh}[u+tv] \right]_{t=0} = \int_M \big[ \eps (\nabla u \cdot \nabla v) + W'(u) v + \mfh v \big] \, d\mu_g,
\end{equation}
and, for critical points $u$ of $E_{\eps,\mfh}$, the second variation along a direction $v \in C^1(M)$ is
\begin{equation} \label{eq:second.variation.ac}
	\delta^2 E_{\eps,\mfh}[u] \{ v, v \} := \left[ \tfrac{d^2}{dt^2} E_{\eps,\mfh}[u+tv] \right]_{t=0} = \int_M \big[ \eps |\nabla v|^2 + W''(u) v^2 \big] \, d\mu_g.
\end{equation}
By analogy with \eqref{eq:spectrum.h.smooth}, \eqref{eq:index.h.smooth}, \eqref{eq:nullity.h.smooth} we define:
\begin{multline} \label{eq:spectrum.ac}
	\lambda_\ell(\delta^2 E_{\eps,\mfh}[u]) := \inf \Big\{ \max \Big\{ \frac{\delta^2 E_{\eps,\mfh}[u]\{v, v\}}{\int v^2 \, d\mu_g} : v \in F \setminus \{\bm{0}\} \Big\} \\
	: F \text{ is an } \ell\text{-dimensional subspace of } C^1(M) \Big\}.
\end{multline}
and
\begin{align} 
	\ind_{E_{\eps,\mfh}}(u) & := \# \{ \ell \in \NN : \lambda_\ell(\delta^2 E_{\eps,\mfh}[u]) < 0 \},\label{eq:index.ac} \\
	\nul_{E_{\eps,\mfh}}(u) & := \# \{ \ell \in \NN : \lambda_\ell(\delta^2 E_{\eps,\mfh}[u]) = 0 \}.\label{eq:nullity.ac}
\end{align}

\begin{rema} \label{rema:isoperimetric.index.h}
	The objects defined in \eqref{eq:spectrum.h.smooth}, \eqref{eq:index.h.smooth}, \eqref{eq:nullity.h.smooth}, \eqref{eq:spectrum.ac}, \eqref{eq:index.ac}, \eqref{eq:nullity.ac} measure the variational behavior of $(\Sigma; \Omega)$ and $u$ when critical points are constructed holding $\mfh$ \textit{fixed}. This is done, for instance, when one tries to construct hypersurfaces with prescribed mean curvature; see \cite{ZhouZhu:pmc, ZhouZhu:cmc}. In this approach, we are not interested (and do not control) the volume enclosed by $\Omega$. Alternatively, one may wish to construct critical points by holding the enclosed volume of $\Omega$ fixed, or $\int_M u \, d\mu_g$ fixed, and instead giving up control on $\mfh$. See Section \ref{sec:open}.
\end{rema}

\section{Upper bounds for eigenvalues of $\delta^2 E_{\eps,\mfh}$ as $\eps \to 0$}

\label{sec:index.upper}

\begin{theo} \label{theo:index.upper}
	Let $(M^n, g)$ be a closed manifold. Consider a sequence of critical points $u_i$ to $E_{\eps_i, \mfh_i}$ with $\eps_i \to 0$ and $\Vert \mfh_i \Vert_{W^{2,p}(M)} + E_{\eps_i,\mfh_i}[u_i] \leq E$ for all $i$, for $p > n$. Let $V$ denote the limiting varifold and $\Omega$ denote the limiting domain of $(u_i, \eps_i)$, and $\mfh$ denote the limiting $\mfh_i$ after passing to a subsequence $\eps_{i'} \to 0$. Then, for any vector field $\bm{X}$ on $(M^n, g)$ supported away from the singular part of $(V; \Omega)$, we have
	\begin{equation} \label{eq:second.variation.ac.limit}
		\energyunit^{-1} \lim_{i' \to \infty} \delta^2 E_{\eps_{i'},\mfh_{i'}}[u] \{ \nabla u \cdot \bm{X}, \nabla u \cdot \bm{X} \} = \delta^2 A_{2\energyunit^{-1} \mfh}[V; \Omega]\{\bm{X}, \bm{X}\} + \int (\nabla_{\bm{n}} \bm{X} \cdot \bm{n})^2 \, d\Vert V \Vert.
	\end{equation}
	Moverover, for every $\ell \in \NN$,
	\begin{equation} \label{eq:index.upper}
		\lambda_\ell(A_{2\energyunit^{-1}\mfh}[V; \Omega]) \geq \lim_{i \to \infty} \eps_{i'}^{-1} \lambda_{\ell}(E_{\eps_{i'},\mfh_{i'}}[u]).
	\end{equation}
\end{theo}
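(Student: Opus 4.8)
The plan is to prove \eqref{eq:second.variation.ac.limit} first, then deduce \eqref{eq:index.upper} as a relatively soft min-max/linear-algebra consequence. For the limit identity, fix a vector field $\bm{X}$ supported in an open set $\cO$ with $\bar{\cO}$ disjoint from $\sing(V;\Omega)$. The test function $v_i := \nabla u_i \cdot \bm{X}$ is the natural diffuse analogue of the normal variation $\bm{X}^\perp$: when $u_i$ is (locally) close to the one-dimensional heteroclinic profile $\mathbb{H}$ transverse to the layer, $\nabla u_i$ concentrates along the layer in the normal direction, so $v_i$ probes motions of the layer. First I would substitute $v_i$ into \eqref{eq:second.variation.ac}, using the PDE \eqref{eq:ac.pde.h} satisfied by $u_i$ to rewrite derivatives of $u_i$ — this is the standard trick that on a solution the second variation of $E_{\eps,\mfh}$ applied to a field of the form $\nabla u \cdot \bm{X}$ can be massaged, via integration by parts and the Bochner/Weitzenböck formula, into an expression involving $|\nabla u_i|^2$ paired with the geometric quantities $\divg_V\bm{X}$, $|\nabla^\perp\bm{X}|$, the curvature term $\riem(\bm{X},\bm{\tau}_i,\bm{\tau}_i,\bm{X})$, the $\mfh$-flux terms $(\nabla_{\bm{X}}\mfh)(\bm{X}\cdot\bm{n})$ and $\mfh(\bm{X}\cdot\bm{n})\divg\bm{X}$, and the extra full-gradient term $(\nabla_{\bm{n}}\bm{X}\cdot\bm{n})^2$. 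Carrying the $\mfh$-dependence through is where my computation differs from the $\mfh\equiv 0$ case in \cite{ChodoshMantoulidis:multiplicity-one}: the linear term $\mfh u_i$ in $E_{\eps,\mfh}$ contributes, after differentiation and use of \eqref{eq:ac.pde.h}, precisely the boundary-type flux integrals that appear in \eqref{eq:second.variation.h.smooth} against $\cH^{n-1}_g\restr\partial^*\Omega$ in the $\eps\to 0$ limit.

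The core of the argument is then the passage to the limit of each resulting integral against the diffuse measure $\eps_i|\nabla u_i|^2\,d\mu_g$. Here I would invoke the Hutchinson--Tonegawa / Röger--Tonegawa convergence theory (as recalled around \eqref{eq:hutchinson.tonegawa.critical}): away from $\sing V$ the diffuse measures $\eps_i|\nabla u_i|^2\,d\mu_g$ converge, after dividing by $\energyunit$, to $\|V\|$, and — this is the quantitative input I expect to be the main obstacle — one needs local $C^2$ (or $C^{2,\alpha}$) convergence of the level sets $\{u_i=\tau\}$ to $\reg V\cap\cO$ together with control that $u_i$ is, in suitable Fermi coordinates around the layer, $\eps_i$-close to the rescaled heteroclinic $\mathbb{H}((\cdot)/\eps_i)$. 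This is exactly what Theorem \ref{theo:higher.dim}(b), i.e.\ the multiplicity-one curvature estimates, provides near regular multiplicity-one points; near even-multiplicity regular minimal points one uses the corresponding $\mfh\equiv0$ theory. With that in hand, the coarea formula reduces the ambient integrals to integrals over the level sets, the normal-direction integrals factor through $\int_{\mathbb{R}}(\mathbb{H}')^2 = \energyunit$, and matching terms against \eqref{eq:second.variation.h.thin.sing} and \eqref{eq:second.variation.h.smooth.scalar} yields \eqref{eq:second.variation.ac.limit}. The nonnegativity and exact form of the leftover term $\int(\nabla_{\bm{n}}\bm{X}\cdot\bm{n})^2\,d\|V\|$ — which records the ``tilt/normal-stretch'' degree of freedom present in the diffuse problem but absent in $A_\mfh$ — should drop out of the same bookkeeping.

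For \eqref{eq:index.upper}: fix $\ell$ and suppose, for the relevant subsequence, that $\eps_{i'}^{-1}\lambda_\ell(E_{\eps_{i'},\mfh_{i'}}[u])$ converges (pass to a further subsequence if necessary). I would argue by contradiction/contrapositive: if $\lambda_\ell(A_{2\energyunit^{-1}\mfh}[V;\Omega]) < c < \lim \eps_{i'}^{-1}\lambda_\ell$, then by \eqref{eq:spectrum.h.sing} there is an open $\cO$ disjoint from $\sing V$ and an $\ell$-dimensional subspace $F\subset C^1_c(\reg V\cap\cO; N(\reg V\cap\cO))$ on which the Rayleigh quotient of $\delta^2 A_{2\energyunit^{-1}\mfh}$ is $< c$. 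Extend each $\bm{X}\in F$ to an ambient field supported near $\cO$; the map $\bm{X}\mapsto \nabla u_{i'}\cdot\bm{X}$ is linear, and it is injective for $i'$ large because $\int(\nabla u_{i'}\cdot\bm{X})^2\,d\mu_g \to \energyunit\int|\bm{X}^\perp|^2\,d\|V\|$ is bounded below on the unit sphere of $F$ (using the layer convergence again, plus that normal fields don't degenerate). Hence $\{\nabla u_{i'}\cdot\bm{X} : \bm{X}\in F\}$ is an $\ell$-dimensional subspace of $C^1(M)$, and by \eqref{eq:second.variation.ac.limit} the $\delta^2 E_{\eps_{i'},\mfh_{i'}}$ Rayleigh quotient on it is, for $i'$ large, $< \energyunit\, c / \energyunit \cdot(1+o(1))$ — more precisely, dividing numerator and denominator of the $E$-Rayleigh quotient by $\energyunit$ and taking $i'\to\infty$ gives $\limsup \le c + \limsup \big(\text{extra term}\big)/(\cdots)$; since the extra term $\int(\nabla_{\bm{n}}\bm{X}\cdot\bm{n})^2\,d\|V\|\ge 0$ only \emph{helps} make $\lambda_\ell(E)$ small, we conclude $\eps_{i'}^{-1}\lambda_\ell(E_{\eps_{i'},\mfh_{i'}}[u]) \le c + o(1)$, contradicting the choice of $c$. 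This yields \eqref{eq:index.upper}, and feeding it back through \eqref{eq:index.h.smooth}–\eqref{eq:index.h.sing} gives the Morse index bound \eqref{eq:higher.dim.index.upper.h} of Theorem \ref{theo:higher.dim}(a). The one subtlety to watch is the direction of the inequality and the role of the nonnegative extra term: it is precisely why one gets an index \emph{upper} bound for $V$ (equivalently, the diffuse index is at least that of $V$) rather than equality.
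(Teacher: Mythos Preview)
Your proposal has two substantive problems, one of approach and one of sign.

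\textbf{Approach.} You plan to substitute $v_i = \nabla u_i \cdot \bm{X}$ directly into \eqref{eq:second.variation.ac} and then pass to the limit using the fine $C^{2,\alpha}$ layer regularity of Theorem~\ref{theo:higher.dim}(b) (and its $\mfh\equiv 0$ analogue at even-multiplicity points). This is both logically backwards within the paper and strictly stronger than what the theorem assumes: Theorem~\ref{theo:index.upper} is stated with only $W^{2,p}$ control on $\mfh_i$ and with no multiplicity-one hypothesis, whereas the Section~\ref{sec:mult.one} regularity needs $C^{3,\alpha}$ control on $\mfh_i$ and multiplicity one. The paper avoids all of this via \emph{inner variations}: one writes $u^t := u \circ \Phi^{-t}$, notes that for critical points $\left[\tfrac{d^2}{dt^2} E_{\eps,\mfh}[u^t]\right]_{t=0} = \delta^2 E_{\eps,\mfh}[u]\{\nabla u \cdot \bm{X}, \nabla u \cdot \bm{X}\}$, and computes $E_{\eps,\mfh}[u^t]$ by the change-of-variables formula. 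After differentiating twice in $t$, every term is an integral of a fixed smooth coefficient (built from $\bm{X}$, $\nabla\bm{X}$, $\riem$, $\mfh$, $\nabla\mfh$) against either $\eps|\nabla u|^2\,d\mu_g$, $\eps^{-1}W(u)\,d\mu_g$, or $u\,d\mu_g$ --- no second derivatives of $u$ appear. Passing to the limit then needs \emph{only} the Hutchinson--Tonegawa/R\"oger--Tonegawa weak convergence, not level-set $C^{2,\alpha}$ estimates. Your Bochner-and-coarea route may be salvageable, but it proves a weaker statement than claimed.

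\textbf{Sign.} In your eigenvalue argument you write that the extra term $\int(\nabla_{\bm{n}}\bm{X}\cdot\bm{n})^2\,d\|V\|\ge 0$ ``only helps make $\lambda_\ell(E)$ small.'' This is backwards. From \eqref{eq:second.variation.ac.limit} and $\eps(\nabla u\cdot\bm{X})^2\,d\mu_g \rightharpoonup \energyunit|\bm{X}|^2\,d\|V\|$, the rescaled $E$-Rayleigh quotient converges to the $A_{2\energyunit^{-1}\mfh}$-Rayleigh quotient \emph{plus} a nonnegative correction, so the extra term pushes $\eps^{-1}\lambda_\ell(E)$ \emph{up}, not down. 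Your contradiction therefore does not close: from $\delta^2 A < c$ on $F$ you only get $\lim \eps^{-1}\lambda_\ell(E) \le c + (\text{nonneg})$, which need not be $\le c$. The paper's fix is to extend each $\bm{X}\in F$ off $\reg V$ so that $\nabla_{\bm{n}}\bm{X} = \bm{0}$ along $\Sigma\cap\cO$; this kills the extra term and the min-max comparison goes through.
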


\begin{rema} \label{rema:index.upper.known}
	Note:
	\begin{enumerate}
		\item Theorem \ref{theo:index.upper} and its corollary Theorem 1 (a) bound from above the index of the \textit{regular set} of the limiting $(V; \Omega)$ in terms of the indices of $(u_i, \eps_i, \mfh_i)$. This generalizes what was known for $\mfh_i \equiv 0$ (\cite{Gaspar}) to arbitrary $\mfh_i$.
		\item It is important to note that $\mfh_i \equiv 0$ has a decided advantage over $\mfh_i \not \equiv 0$ in that the singular set of $V$ is \textit{always} (when $n \geq 3$) of high-enough codimension in $(M^n, g)$ and thus does not contribute to the index of $V$; see \eqref{rema:stability.sing.zero.h} in Remark \ref{rema:stability.sing}. This makes the main theorem of \cite{Gaspar} applicable even across $\sing V$ (by \eqref{rema:stability.sing.small}  in Remark \ref{rema:stability.sing}), which is not true of Theorem \ref{theo:index.upper} when $\mfh_i \not \equiv 0$ and there are large geometric singular sets in the sense of \eqref{rema:stability.sing.nonzero.h} or \eqref{rema:stability.sing.n2} in Remark \ref{rema:stability.sing}.
		\item In accordance with \cite{White:generic-transversality}, one hopes \textit{generic} background metrics $g$ to have the property that limiting $(V; \Omega)$ have \textit{no} geometric singular set in the sense of Remark \ref{rema:stability.sing}. This will make Theorem \ref{theo:index.upper} \textit{generically} applicable across $\sing V$.
	\end{enumerate}
	 
\end{rema}

In order to prove our upper semicontinuity variational result for $(u_i, \eps_i, \mfh_i)$ as $\eps_i \to 0$, it will be convenient to rederive the first and second variations of $E_{\eps_i,\mfh_i}$ for a special class of variations, called \emph{inner variations}, which geometrically perturb the level sets of $u_i$, rather than analytically perturb $u_i$ as one does in full generality for \eqref{eq:first.variation.ac}, \eqref{eq:second.variation.ac}. This second method was used in this setting with the same goal in mind in \cite{Gaspar} assuming $\mfh \equiv 0$. We follow that same method in this proof. For simplicity of notation, we write $u$, $\mfh$, $\eps$ in place of $u_i$, $\mfh_i$, $\eps_i$.

\begin{proof}[Proof of Theorem \ref{theo:index.upper}]
	Denote $u^t := u \circ \Phi^{-t}$, where $\Phi^t : M \to M$, $t \in \RR$, denotes the flow of $\bm{X}$. By the change of variables formula,
	\begin{align} \label{eq:second.variation.ac.flow}
	E_{\eps,\mfh}[u^t] 
		& = \int_M \big[ \tfrac12 \eps |\nabla u^t|^2 + \eps^{-1} W(u^t) + \mfh u^t \big] \, d\mu_g \nonumber \\
		& = \int_M \big[ \tfrac12 \eps |(\nabla u^t) \circ \Phi^t|^2 + \eps^{-1} W(u) + (\mfh \circ \Phi^t) u \big] |J\Phi^t| \, d\mu_g.
	\end{align}
	It will be convenient to introduce some auxiliary notation,  following \cite{Gaspar}:
	\begin{align*}
	S_{\bm{X}}(\bm{Y}_1, \bm{Y}_2) & = \nabla_{\bm{Y}_1} \bm{X} \cdot \nabla_{\bm{Y}_2} \bm{X}, \\
	h_{\bm{X}}(\bm{Y}_1, \bm{Y}_2) & =  \nabla_{\bm{Y}_1} \bm{X} \cdot \bm{Y}_2 + \bm{Y}_1 \cdot \nabla_{\bm{Y}_2} \bm{X} = (\cL_{\bm{X}} g)(\bm{Y}_1, \bm{Y}_2). 
	\end{align*}
	As in \cite{Gaspar}:
	\begin{align}
		\left[ \tfrac{\partial}{\partial t} |J\Phi^t| \right]_{t=0} 
		& = \divg \bm{X}, \label{eq:second.variation.ac.ddt.jphi} \\
		\left[ \tfrac{\partial^2}{\partial t^2} |J\Phi^t| \right]_{t=0} 
		& = \divg (\nabla_{\bm{X}} \bm{X}) - \ricc(\bm{X}, \bm{X}) + \tr_g S_{\bm{X}} - \tfrac12 |h_{\bm{X}}|^2 + (\divg \bm{X})^2 \nonumber \\
		& = \divg (\nabla_{\bm{X}} \bm{X}) - \ricc(\bm{X}, \bm{X}) + |\nabla \bm{X}|^2 - \tfrac12 |\cL_{\bm{X}} g|^2 + (\divg \bm{X})^2, \label{eq:second.variation.ac.ddt2.jphi}
	\end{align}
	as well as:
	\begin{align}
	\left[ \tfrac{\partial}{\partial t} |(\nabla u^t) \circ \Phi^t|^2 \right]_{t=0}
		& = - 2 \nabla_{\nabla u} \bm{X} \cdot \nabla u, \label{eq:second.variation.ac.ddt.gradu} \\
	\left[ \tfrac{\partial^2}{\partial t^2} |(\nabla u^t) \circ \Phi^t|^2 \right]_{t=0}
		& = 2 \riem(\bm{X}, \nabla u, \nabla u, \bm{X}) - 2 \nabla_{\nabla u} \nabla_{\bm{X}} \bm{X} \cdot \nabla u \nonumber \\
		& \qquad + 2 |\nabla u \cdot \nabla_{\bullet} \bm{X}|^2 + 4 (\nabla_{\nabla_{\nabla u} \bm{X}} \bm{X} \cdot \nabla u)^2. \label{eq:second.variation.ac.ddt2.gradu}
	\end{align}
	We also clearly have:
	\begin{align}
		\left[ \tfrac{\partial}{\partial t} (\mfh \circ \Phi^t) \right]_{t=0} & = \nabla \mfh \cdot \bm{X}, \label{eq:second.variation.ac.ddt.mfh} \\
		\left[ \tfrac{\partial^2}{\partial t^2} (\mfh \circ \Phi^t) \right]_{t=0} & = \nabla_{\bm{X}} \nabla \mfh \cdot \bm{X} \label{eq:second.variation.ac.ddt2.mfh}.
	\end{align}
	From \eqref{eq:second.variation.ac.flow}, \eqref{eq:second.variation.ac.ddt.jphi}, \eqref{eq:second.variation.ac.ddt.gradu}, \eqref{eq:second.variation.ac.ddt.mfh}, we get:
	\begin{align} 
		\left[ \tfrac{d}{dt} E_{\eps,\mfh}[u^t]	 \right]_{t=0}
		& = \int_M \big[ - \eps (\nabla_{\nabla u} \bm{X} \cdot \nabla u) + (\nabla \mfh \cdot \bm{X}) u \nonumber \\
		& \qquad + (\tfrac12 \eps |\nabla u|^2 + \eps^{-1} W(u) + \mfh u) \divg \bm{X} \big] \, d\mu_g, \label{eq:second.variation.ac.ddt}
	\end{align}
	and from \eqref{eq:second.variation.ac.flow}, \eqref{eq:second.variation.ac.ddt2.jphi}, \eqref{eq:second.variation.ac.ddt2.gradu}, \eqref{eq:second.variation.ac.ddt2.mfh}, we find that
	\begin{align} \label{eq:second.variation.ac.ddt2}
		\left[ \tfrac{d^2}{dt^2} E_{\eps,\mfh}[u^t] \right]_{t=0}
		& = \int_M \Big[ \eps \Big( \riem(\bm{X}, \nabla u, \nabla u, \bm{X}) - \nabla_{\nabla u} \nabla_{\bm{X}} \bm{X} \cdot \nabla u \nonumber \\
		& \qquad \qquad + |\nabla u \cdot \nabla_{\bullet} \bm{X}|^2 + 2 (\nabla_{\nabla_{\nabla u} \bm{X}} \bm{X} \cdot \nabla u)^2 \Big) \nonumber \\
		& \qquad + (\nabla_{\bm{X}} \nabla \mfh \cdot \bm{X}) u + (\nabla_{\bm{X}} \mfh \cdot \nabla_{\bm{X}} \bm{X}) u \nonumber \\
		& \qquad + 2 \Big( - \eps (\nabla_{\nabla u} \bm{X} \cdot \nabla u) + (\nabla \mfh \cdot \bm{X}) u \Big) \divg \bm{X} \nonumber \\
		& \qquad + (\tfrac12 \eps |\nabla u|^2 + \eps^{-1} W(u) + \mfh u) \nonumber \\
		& \qquad \qquad \cdot \Big( \divg (\nabla_{\bm{X}} \bm{X}) - \ricc(\bm{X}, \bm{X}) + |\nabla \bm{X}|^2 - \tfrac12 |\cL_{\bm{X}} g|^2 + (\divg \bm{X})^2 \Big) \Big] \, d\mu_g.
	\end{align}
	Note how, for critical points, \eqref{eq:second.variation.ac.ddt2} reduces to:
	\begin{align} \label{eq:second.variation.ac.ddt2.critical}
		\left[ \tfrac{d^2}{dt^2} E_{\eps,\mfh}[u^t] \right]_{t=0}
		& = \int_M \Big[ \eps \Big( \riem(\bm{X}, \nabla u, \nabla u, \bm{X}) + |\nabla u \cdot \nabla_{\bullet} \bm{X}|^2 + 2 (\nabla_{\nabla_{\nabla u} \bm{X}} \bm{X} \cdot \nabla u)^2 \Big) \nonumber \\
		& \qquad + (\nabla_{\bm{X}} \nabla \mfh \cdot \bm{X}) u  + 2 \Big( - \eps (\nabla_{\nabla u} \bm{X} \cdot \nabla u) + (\nabla \mfh \cdot \bm{X}) u \Big) \divg \bm{X} \nonumber \\
		& \qquad + (\tfrac12 \eps |\nabla u|^2 + \eps^{-1} W(u) + \mfh u) \nonumber \\
		& \qquad \qquad \cdot \Big( - \ricc(\bm{X}, \bm{X}) + |\nabla \bm{X}|^2 - \tfrac12 |\cL_{\bm{X}} g|^2 + (\divg \bm{X})^2 \Big) \Big] \, d\mu_g.
	\end{align}
	Letting $\eps \to 0$ in \eqref{eq:second.variation.ac.ddt2.critical}, invoking \cite{HutchinsonTonegawa00, RogerTonegawa08}, passing to a subsequence accordingly (though still denoting by $\eps \to 0$ for simplicity of notation), and denoting the subsequential limiting varifold by $V$ and the limiting enclosed domain (where $u \to -1$) by $\Omega$ and its outward pointing unit normal by $\bm{n}$:
	\begin{align*}
		& \energyunit^{-1} \lim_{\eps \to 0} \left[ \tfrac{d^2}{dt^2} E_{\eps,\mfh}[u^t] \right]_{t=0} \\
		& \qquad = \int \Big[ | \bm{n} \cdot  \nabla_{\bullet} \bm{X}|^2 + 2 (\nabla_{\nabla_{\bm{n}} \bm{X}} \bm{X} \cdot \bm{n})^2 - 2 (\nabla_{\bm{n}} \bm{X} \cdot \bm{n}) \divg \bm{X} \\
		& \qquad \qquad + |\nabla \bm{X}|^2 - \tfrac12 |\cL_{\bm{X}} g|^2 + (\divg \bm{X})^2 - \tr_V \riem(\bm{X}, \cdot, \cdot, \bm{X}) \Big] \, d\Vert V \Vert \\
		& \qquad - \energyunit^{-1} \int_\Omega \Big[ \mfh \Big( - \ricc(\bm{X}, \bm{X}) + |\nabla \bm{X}|^2 - \tfrac12 |\cL_{\bm{X}} g|^2 + (\divg \bm{X})^2 \Big) \\
		& \qquad \qquad \qquad + (\nabla_{\bm{X}} \nabla \mfh \cdot \bm{X}) + 2 (\nabla \mfh \cdot \bm{X}) \divg \bm{X} \Big] \, d\mu_g  \\
		& \qquad + \energyunit^{-1} \int_{M \setminus \Omega} \mfh \Big( - \ricc(\bm{X}, \bm{X}) + |\nabla \bm{X}|^2 - \tfrac12 |\cL_{\bm{X}} g|^2 + (\divg \bm{X})^2 \Big) \\
		& \qquad \qquad \qquad + (\nabla_{\bm{X}} \nabla \mfh \cdot \bm{X}) + 2 (\nabla \mfh \cdot \bm{X}) \divg \bm{X} \Big] \, d\mu_g.
	\end{align*}
	Note that
	\[ |\bm{n} \cdot \nabla_{\bullet} \bm{X}|^2 =  (\nabla_{\bm{n}} \bm{X} \cdot \bm{n})^2 + |\nabla_{\partial \Omega}^\perp \bm{X}|^2, \]
	\[ 2 (\nabla_{\nabla_{\bm{n}} \bm{X}} \bm{X} \cdot \bm{n})^2 + |\nabla \bm{X}|^2 - \tfrac12 |\cL_{\bm{X}} g|^2 = (\nabla_{\bm{n}} \bm{X} \cdot \bm{n})^2 - \sum_{i,j=1}^{n-1} (\nabla_{\bm{\tau}_i} \bm{X} \cdot  \bm{\tau}_j) (\nabla_{\bm{\tau}_j} \bm{X} \cdot \bm{\tau}_i), \]
	where $(\bm{\tau}_i)_{i=1,\ldots,n-1}$ is an orthonormal basis for the tangent space at a.e. point of $V$. Thus,
	\begin{align*}
		& \energyunit^{-1} \lim_{\eps \to 0} \left[ \tfrac{d^2}{dt^2} E_{\eps,\mfh}[u^t] \right]_{t=0} \\
		& \qquad = \int \Big[ 2 (\nabla_{\bm{n}} \bm{X} \cdot  \bm{n})^2 + |\nabla_V^\perp \bm{X}|^2 - 2 (\nabla_{\bm{n}} \bm{X} \cdot \bm{n}) \divg X \\
		& \qquad \qquad - \sum_{i,j=1}^{n-1}  (\nabla_{\bm{\tau}_i} \bm{X} \cdot \bm{\tau}_j) (\nabla_{\bm{\tau}_j} \bm{X} \cdot \bm{\tau}_i) + (\divg \bm{X})^2 - \tr_V \riem(\bm{X}, \cdot, \cdot, \bm{X}) \Big] \, d\Vert V \Vert \\
		& \qquad - \energyunit^{-1} \int_\Omega \Big[ \mfh \Big( - \ricc(\bm{X}, \bm{X}) + |\nabla \bm{X}|^2 - \tfrac12 |\cL_{\bm{X}} g|^2 + (\divg \bm{X})^2 \Big) \\
		& \qquad \qquad \qquad + (\nabla_{\bm{X}} \nabla \mfh \cdot \bm{X}) + 2 (\nabla \mfh \cdot \bm{X}) \divg \bm{X} \Big] \, d\mu_g  \\
		& \qquad + \energyunit^{-1} \int_{M \setminus \Omega} \mfh \Big( - \ricc(\bm{X}, \bm{X}) + |\nabla \bm{X}|^2 - \tfrac12 |\cL_{\bm{X}} g|^2 + (\divg \bm{X})^2 \Big) \\
		& \qquad \qquad \qquad + (\nabla_{\bm{X}} \nabla \mfh \cdot \bm{X}) + 2 (\nabla \mfh \cdot \bm{X}) \divg \bm{X} \Big] \, d\mu_g \\
		& \qquad = \int \Big[ (\nabla_{\bm{n}} \bm{X} \cdot \bm{n})^2 + |\nabla_V^\perp \bm{X}|^2 + (\divg_V \bm{X})^2 \\
		& \qquad \qquad - \sum_{i,j=1}^{n-1} (\nabla_{\bm{\tau}_i} \bm{X} \cdot \bm{\tau}_j)(\nabla_{\bm{\tau}_j} \bm{X} \cdot \bm{\tau}_i) - \tr_{\partial \Omega} \riem(\bm{X}, \cdot, \cdot, \bm{X}) \Big] \, d\Vert V \Vert \\
		& \qquad - \energyunit^{-1} \int_\Omega \Big[ \mfh \Big( - \ricc(\bm{X}, \bm{X}) + |\nabla \bm{X}|^2 - \tfrac12 |\cL_{\bm{X}} g|^2 + (\divg \bm{X})^2 \Big) \\
		& \qquad \qquad \qquad + (\nabla_{\bm{X}} \nabla \mfh \cdot \bm{X}) + 2 (\nabla \mfh \cdot \bm{X}) \divg \bm{X} \Big] \, d\mu_g  \\
		& \qquad + \energyunit^{-1} \int_{M \setminus \Omega} \mfh \Big( - \ricc(\bm{X}, \bm{X}) + |\nabla \bm{X}|^2 - \tfrac12 |\cL_{\bm{X}} g|^2 + (\divg \bm{X})^2 \Big) \\
		& \qquad \qquad \qquad + (\nabla_{\bm{X}} \nabla \mfh \cdot \bm{X}) + 2 (\nabla \mfh \cdot \bm{X}) \divg \bm{X} \Big] \, d\mu_g.
	\end{align*}
	We recognize, from \eqref{eq:second.variation.smooth},
	\begin{align} \label{eq:second.variation.ac.area.term}
		& \int \Big[ (\nabla_{\bm{n}} \bm{X} \cdot \bm{n})^2 + |\nabla_V^\perp \bm{X}|^2 + (\divg_V \bm{X})^2 \nonumber \\
		& \qquad - \sum_{i,j=1}^{n-1} (\nabla_{\bm{\tau}_i} \bm{X} \cdot \bm{\tau}_j)(\nabla_{\bm{\tau}_j} \bm{X} \cdot \bm{\tau}_i) - \tr_{\partial \Omega} \riem(\bm{X}, \cdot, \cdot, \bm{X}) \Big] \, d\Vert V \Vert \nonumber \\
		& = \Big[ \tfrac{d^2}{dt^2} \Vert (\Phi^{-t})_{\#} V \Vert(M)\Big]_{t=0} - \int \divg_V \nabla_{\bm{X}} \bm{X} \, d\Vert V \Vert + \int (\nabla_{\bm{n}} \bm{X} \cdot \bm{n})^2 \, d\Vert V \Vert.
	\end{align}
	We also recognize, from \eqref{eq:second.variation.ac.ddt2.jphi},
	\begin{align} \label{eq:second.variation.ac.inner.vol.term}
		& \int_\Omega \Big[ \mfh \Big( - \ricc(\bm{X}, \bm{X}) + |\nabla \bm{X}|^2 - \tfrac12 |\cL_{\bm{X}} g|^2 + (\divg \bm{X})^2 \Big) \nonumber \\
		& \qquad + (\nabla_{\bm{X}} \nabla \mfh \cdot \bm{X}) + 2 (\nabla \mfh \cdot \bm{X}) \divg \bm{X} \Big] \, d\mu_g \nonumber \\		
		& = \Big[ \tfrac{d^2}{dt^2} \int_{\Phi^{-t}(\Omega)} \mfh \, d\mu_g \Big]_{t=0} - \int_\Omega \big[ \nabla \mfh \cdot \nabla_{\bm{X}} \bm{X} + \mfh \divg(\nabla_{\bm{X}} \bm{X}) \big] \, d\mu_g
	\end{align}
	and, similarly,
	\begin{align} \label{eq:second.variation.ac.outer.vol.term}
		& \int_{M \setminus \Omega} \Big[ \mfh \Big( - \ricc(\bm{X}, \bm{X}) + |\nabla \bm{X}|^2 - \tfrac12 |\cL_{\bm{X}} g|^2 + (\divg \bm{X})^2 \Big) \nonumber \\
		& \qquad + (\nabla_{\bm{X}} \nabla \mfh \cdot \bm{X}) + 2 (\nabla \mfh \cdot \bm{X}) \divg \bm{X} \Big] \, d\mu_g \nonumber \\		
		& = \Big[ \tfrac{d^2}{dt^2} \int_{\Phi^{-t}(M \setminus \Omega)} \mfh \, d\mu_g \Big]_{t=0} - \int_{M \setminus \Omega} \big[ \nabla \mfh \cdot \nabla_{\bm{X}} \bm{X} + \mfh \divg(\nabla_{\bm{X}} \bm{X}) \big] \, d\mu_g
	\end{align}	
	From \eqref{eq:second.variation.ac.area.term}, \eqref{eq:second.variation.ac.inner.vol.term}, \eqref{eq:second.variation.ac.outer.vol.term}, and integration by parts, we deduce that
	\begin{align*} 
		\energyunit^{-1} \lim_{\eps \to 0} \left[ \tfrac{d^2}{dt^2} E_{\eps,\mfh}[u^t] \right]_{t=0} 
		& = \Big[ \tfrac{d^2}{dt^2} \Big( \Vert (\Phi^{-t})_{\#} V \Vert(M) - 2 \energyunit^{-1} \int_{\Phi^{-t}(\Omega)} \mfh \, d\mu_g \Big) \Big]_{t=0} \nonumber \\
		& \qquad - \int \divg_V \nabla_{\bm{X}} \bm{X} \, d\Vert V \Vert - \int_\Omega \big[ \nabla \mfh \cdot \nabla_{\bm{X}} \bm{X} + \mfh \divg(\nabla_{\bm{X}} \bm{X}) \big] \, d\mu_g. \nonumber \\
		& \qquad + \int (\nabla_{\bm{n}} \bm{X} \cdot \bm{n})^2 \, d\Vert V \Vert.
	\end{align*}
	The first and second integrals of the right hand side cancel each other out since $(V; \Omega)$ is a critical point of $A_{2\energyunit^{-1}\mfh}$ by \cite{HutchinsonTonegawa00, RogerTonegawa08}; \eqref{eq:second.variation.ac.limit} follows.
	
	We proceed to the eigenvalue estimate in \eqref{eq:index.upper}. Write $\Sigma := \reg V$. Fix $\eta > 0$, and let $\cO \subset M \setminus \sing V$ be such that 
	\begin{equation} \label{eq:second.variation.ac.eigenvalue.approx.1}
		\lambda_\ell(\delta^2 A_{2\energyunit^{-1} \mfh}[V; \Omega]; \cO) \leq \lambda_\ell(\delta^2 A_{2\energyunit^{-1} \mfh}[V; \Omega]) + \eta.
	\end{equation}
	Let $F$ be an $\ell$-dimensional subspace of $C^1_c(\Sigma \cap \cO; N(\Sigma \cap \cO))$, chosen so that 
	\begin{equation} \label{eq:second.variation.ac.eigenvalue.approx.2} 
		\frac{\delta^2 A_{2\energyunit^{-1} \mfh}[V; \Omega]\{ \bm{X}, \bm{X} \}}{\int |\bm{X}|^2 \, d\Vert V \Vert} \leq \lambda_\ell(\delta^2 A_{2\energyunit^{-1} \mfh}[V; \Omega]; \cO) + \eta, \; \text{ for all } \bm{X} \in F \setminus \{\bm{0}\}.
	\end{equation}
	We can easily extend each of the vector fields $\bm{X} \in F$ to an ambient vector field supported away from the singular set of $(V; \Omega)$ and with $\nabla_{\bm{n}} \bm{X} = \bm{0}$ along $\Sigma \cap \cO$. It is easy to see that the linear mapping
	\[ F \ni \bm{X} \mapsto (\nabla u \cdot \bm{X}) \in L^2(M) \]
	is injective for sufficiently small $\eps$ (see, e.g., \cite[Section 4]{Gaspar}, for these details). It follows that 
	\[ \{ (\nabla u \cdot \bm{X}) : \bm{X} \in F \} \subset L^2(M) \]
	is $\ell$-dimensional. From \eqref{eq:second.variation.ac.limit}, \eqref{eq:second.variation.ac.eigenvalue.approx.1}, \eqref{eq:second.variation.ac.eigenvalue.approx.2}, and the convergence $\eps (\bm{X} \cdot \nabla u)^2 \, d\mu_g \weaklyto \energyunit |\bm{X}|^2 \, d \Vert V \Vert$ as $\eps \to 0$ (\cite{HutchinsonTonegawa00, RogerTonegawa08}), we have:
	\[ \lim_{\eps \to 0} \eps^{-1} \lambda_\ell(\delta^2 E_{\eps,\mfh}[u]) \leq \lambda_\ell(\delta^2 A_{2\energyunit^{-1} \mfh}[V; \Omega]) + 2 \eta. \]
	The result follows since $\eta > 0$ is arbitrary.
\end{proof}

\section{Multiplicity-one asymptotics: $\eps$, $\eps^2$, and $o(\eps^2)$}

\label{sec:mult.one}

The analogous lower semicontinuity variational results for $(u, \eps, \mfh)$ as $\eps \to 0$ are more subtle than their upper semicontinuity counterparts from Section \ref{sec:index.upper}, where we only needed to use the existence (\cite{HutchinsonTonegawa00, RogerTonegawa08}) and regularity (\cite{BellettiniWickramasekera:ac}) of the weak limit as $\eps \to 0$. For the lower semicontinuity, we need a sharp understanding of the regularity of $u$ near $\{u = 0\}$, \emph{before} taking the limit $\eps \to 0$. It was shown in \cite{HutchinsonTonegawa00} that in the $O(\eps)$-scale around ``most'' points of $\{ u = 0 \}$, $u$ looks approximately like the one-dimensional solution $\mbh : \RR \to (-1, 1)$ of
\begin{equation} \label{eq:heteroclinic}
	\mbh'' = W'(\mbh),  \; \mbh(0) = 0,
\end{equation}
even in the presence of higher multiplicity and/or of $\mfh$. For our purposes, we need to find the expansion of $u$ up to order $o(\eps^2)$:
\begin{equation} \label{eq:u.expansion}
	u(y, z) = \mbh(\eps^{-1} z) + \eps \big[ \cdots \big] + \eps^2 \big[ \cdots \big] + o(\eps^2),
\end{equation}
with $y \in \{ u = 0 \}$ being the interface coordinate and $z$ being the vertical coordinate off the interface. This is the necessary order of approximation in order to obtain the lower semicontinuity relations between $(u, \eps, \mfh)$ and the $\eps \to 0$ limit $(V; \Omega)$.

When $\mfh \equiv 0$, the $\eps \big[ \cdots \big]$-term was determined and exploited for regularity purposes in the foundational paper of \cite{WangWei}, even allowing for many sheets in $\{ u = 0 \}$ (i.e., in high multiplicity). This was further refined in \cite{ChodoshMantoulidis:multiplicity-one, WangWei2}.

The next term in the asymptotic expansion of $u$ encodes the interference between sheets of $\{ u = 0 \}$ and is $\eps^2 |\log \eps| \big[ \cdots \big]$, rather than $\eps^2 \big[ \cdots \big]$, in the presence of high multiplicity; see \cite{delPinoKowalczykWeiYang:interface}. This obstructs one's ability to relate the variational structure of $(u, \eps, \mfh)$ to that of $(V; \Omega)$. However, this generally doesn't occur unless the solutions $(u, \eps, \mfh)$ are extremely variationally unstable, in which case no lower semicontinuity result is to be expected of $(u, \eps, \mfh)$; see \cite{ChodoshMantoulidis:multiplicity-one, WangWei2}. So, to get a proper lower semicontinuity result, one needs to restrict to \emph{multiplicity-one}, as we are doing here, where we can indeed verify that the next term in the asymptotic expansion is $\eps^2 \big[ \cdots \big]$. When $\mfh \equiv 0$, the $O(\eps^2)$ behavior in multiplicity-one was determined and exploited in \cite{ChodoshMantoulidis:multiplicity-one}.

In this section, we deduce both of the $\eps \big[ \cdots \big]$ and $\eps^2 \big[ \cdots \big]$ asymptotics without the extra assumption of $\mfh \equiv 0$, in the case of multiplicity-one convergence.

\begin{rema} \label{rema:mult.one}
	Besides extend to $\mfh \not \equiv 0$, the results we present here simplify the corresponding multiplicity-one results in \cite{WangWei, WangWei2} and \cite{ChodoshMantoulidis:multiplicity-one}. We still follow the strategy of \cite{WangWei}. We \textit{do not}, however, pursue the higher multiplicity regularity question that was pursued in these papers. For geometric applications, in accordance with \cite{Mantoulidis, ZhouZhu:pmc, ZhouZhu:cmc}, and \ref{rema:intro.h.odd.sharp}-\ref{rema:intro.h.even.sharp} of the introduction, one generally expects (and must verify!) multiplicity-one convergence. See also Section \ref{sec:open}.
\end{rema}

Throughout the section, we work in a Riemannian manifold $(M^n, g)$, $n \geq 2$.  Our approximation results are purely local, so we need not assume $(M, g)$ to be closed or even complete, provided we take care to work away from its boundary. 

In what follows,\footnote{These are all the same assumptions as in \cite[Section 2.1]{ChodoshMantoulidis:multiplicity-one}, with slightly more regularity on the background metric $g$ to streamline the exposition, and of course the added single-sheeted assumption.} let us fix $\alpha \in (0, 1)$ and  assume that we're working inside a precompact open set $\cO \subset M$ where the ambient metric $g$ is $C^5$ close to Euclidean,
\begin{equation} \label{eq:mult.one.assumptions.0}
	\sum_{\ell=0}^5 |\partial^\ell(g_{ij} - \delta_{ij})| \leq \eta_0 \text{ on } \cO,
\end{equation}
for some small $\eta_0 > 0$. For the solution $(u, \eps, \mfh)$ of \eqref{eq:ac.pde.h}, we assume that for some $\eps_0$, $E_0 > 0$:
\begin{equation} \label{eq:mult.one.assumptions.i}
	\eps \leq \eps_0, \; |u| \leq E_0 \text{ on } \cO, \; (E_{\eps,\mfh} \restr \cO)[u] \leq E_0, \; \Vert \mfh \Vert_{C^{3,\alpha}(\cO)} \leq E_0,
\end{equation}
and for some $\beta_0 \in (0, 1)$, $c_0 > 0$:
\begin{equation} \label{eq:mult.one.assumptions.ii}
	\eps |\nabla u| \geq c_0^{-1} > 0 \text{ on } \cO \cap \{ |u| < 1-\beta_0 \},
\end{equation}
which forces $\nabla u \neq \bm{0}$ and thus all level sets in $\cO \cap \{ |u| < 1-\beta_0 \}$ to be smooth, as well as that $\cA = \nabla(\nabla u/|\nabla u|)$ satisfies:
\begin{equation} \label{eq:mult.one.assumptions.iii}
	|\cA| + \eps |\nabla \cA| + \eps^2 |\nabla^2 \cA| + \eps^3 |\nabla^3 \cA| \leq c_0 \text{ on } \cO \cap \{ |u| < 1-\beta_0 \};
\end{equation}
cf. \cite[(2.3)-(2.7)]{ChodoshMantoulidis:multiplicity-one} for estimates up to $\nabla^2 \cA$; the estimate on $\nabla^3 \cA$ follows using the additional regularity we are assuming on $g$ in \eqref{eq:mult.one.assumptions.0}. We assume that
\[ \Gamma := \cO \cap \{ u = 0 \} \]
is a \emph{connected} smooth submanifold and that (possibly after rescaling) we have well-defined Fermi coordinates
\[ (y, z) \in B_{20}^\Gamma \times (-2, 2) \subset \cO \]
off $\Gamma$, where $B_{20}^\Gamma$ indicates a fixed geodesic ball within $\Gamma$ that is diffeomorphic to a disk. In these Fermi coordinates, $g_z$ denotes the metric induced on the constant-$z$ hypersurfaces parallel to $\Gamma$, and $\sff_z$, $H_z$ denote their second fundamental form 2-tensor and mean curvature scalar with $\partial_z$ taken as the ``outward'' normal. With this convention,
\[ \sff_z(\bm{X}, \bm{Y}) = \nabla_{\bm{X}} \partial_z \cdot \bm{Y} \text{ for } \bm{X}, \bm{Y} \text{ tangent to } \Gamma, \; H_z = \tr_{g_z} \sff_z, \text{ and } \]
\[ \Delta = \Delta_{g_z} + H_z \partial_z + \partial_z^2. \]
We will write $\nabla_\Gamma$, $\Delta_\Gamma$, $\sff_\Gamma$, $H_\Gamma$, $\bm{n}_\Gamma$ in place of $\nabla_{g_0}$, $\Delta_{g_0}$, $\sff_0$, $H_0$, $\partial_z|_{\Gamma}$. Note that a geometric consequence of \eqref{eq:mult.one.assumptions.0}, \eqref{eq:mult.one.assumptions.ii}, \eqref{eq:mult.one.assumptions.iii} and the Riccati equation
\[ \cL_{\partial_z} \sff_z = \sff_z^2 - \riem_g(\cdot, \partial_z, \partial_z, \cdot) \]
is the following $C^3_\eps$ bound (i.e., $C^3$ bound in the $\eps$-scale) on the second fundamental forms $\sff_z$, $z \in (-2, 2)$:
\begin{equation} \label{eq:mult.one.sff.bounds}
	|\sff_z| + \eps |\nabla_{g_z} \sff_z| + \eps^2 |\nabla_{g_z}^2 \sff_z| + \eps^3 |\nabla_{g_z}^3 \sff_z| \leq c_1
\end{equation}
for some $c_1 = c_1(n, \eta_0, c_0)$. For any $f : B_{20}^\Gamma \to \RR$ (independent of $z$) we have:
\begin{equation} \label{eq:mult.one.ddt.connection}	
\begin{aligned} 
	\cL_{\partial_z} \nabla_{g_z} f & = - 2 \sff_z(\nabla_{g_z} f, \cdot), \\
	\cL_{\partial_z} \nabla_{g_z}^2 f & = - \nabla^{g_z}_{\nabla_{g_z} f} \sff_z, \\
	\cL_{\partial_z} \Delta_{g_z} f & = -2 \langle \sff_z, \nabla^2_{g_z} f \rangle_{g_z} - \langle \nabla_{g_z} H_z, \nabla_{g_z} f \rangle_{g_z}; 
\end{aligned}
\end{equation}
see \cite[Appendix A]{ChodoshMantoulidis:multiplicity-one}. Together, \eqref{eq:mult.one.sff.bounds}, \eqref{eq:mult.one.ddt.connection} culminate in the $\eps$-scale estimates:
\begin{equation} \label{eq:mult.one.connection.estimates}
	\sum_{k=0}^\ell \eps^k \Vert (\nabla_{g_z} - \nabla_{\Gamma}) f \Vert_{C^k(U)} \leq c_1 |z| \sum_{k=0}^\ell \eps^k \Vert \nabla_\Gamma f \Vert_{C^k(U)},
\end{equation}
for $\ell = 0$, $1$, $2$, $3$, any $U \subset B^\Gamma_{20}$, and a possibly larger $c_1 = c_1(n, \eta_0, c_0)$.

Fix $\delta_* \in (0, 1)$ throughout. We define the cut off heteroclinic $\overline \mbh : \RR \to (-1, 1)$ given by:\footnote{We use a wider cutoff than in \cite{WangWei, ChodoshMantoulidis:multiplicity-one}. One might also use the cutoff of \cite{WangWei, ChodoshMantoulidis:multiplicity-one} after analyzing the exponential decay rate of the auxiliary function $\mbi$ defined in \eqref{eq:mult.one.i.ode}. We do not pursue this here.}
\begin{equation} \label{eq:mbh.cutoff}
	\overline \mbh(t) := \chi(\eps^{\delta_*} t) \mbh(t) + (\operatorname{sign} t) (1 - \chi(\eps^{\delta_*} t)),
\end{equation}
where $\chi$ indicates a smooth cutoff function such that
\begin{equation} \label{eq:mbh.cutoff.chi}
	\chi(t) = 1 \text{ for } t \in (-1, 1), \; \support \chi \subset (-2, 2).
\end{equation}
The exponential asymptotics of $\mbh$ ($\pm 1 \mp A \exp(-\sqrt{W''(\pm 1)} |t|)$ as $t \to \pm \infty$) give:
\begin{equation} \label{eq:mbh.cutoff.estimates}
	|\overline \mbh '' - W'(\overline \mbh)|_{C^3(\RR)} = O(\eps^3).
\end{equation}
Throughout, all $O(\cdot)$-notation and $o(\cdot)$-notation will be used under the assumption that we're sending $\eps \to 0$. 

Using the implicit function theorem and the multiplicity-one condition, as in \cite[Proposition 9.1]{WangWei}, one can produce an auxiliary function $h : B_{20}^\Gamma \to \RR$ such that
\begin{equation} \label{eq:mult.one.h.initial.est} 
	\Vert h \Vert_{C^{3,\alpha}_\eps(B_{19}^\Gamma)} = o(\eps)
\end{equation}
and
\begin{equation} \label{eq:mult.one.phi.orthogonal}
	\int_{-2}^2 (u(y,z) - \overline \mbh(\eps^{-1}(z-h(y)))) \overline \mbh'(\eps^{-1}(z-h(y))) \, dz = 0
\end{equation}
for all $y \in B_{19}^\Gamma$. Throughout the paper, $C^{\ell}_\eps$ and $C^{\ell,\alpha}_\eps$ denote the standard $\eps$-scaled weighted Banach spaces whose norms are:
\begin{equation} \label{eq:mult.one.ckalpha.notation}
	\Vert f \Vert_{C^\ell_\eps} := \sum_{k=0}^\ell \eps^k \Vert f \Vert_{C^\ell}, \; \Vert f \Vert_{C^{\ell,\alpha}_\eps} := \Vert f \Vert_{C^{\ell}_\eps} + \eps^{\ell+\alpha} [\nabla^\ell f]_\alpha.
\end{equation}

Our goal is to get an expansion for $u - \overline{\mbh}$ in terms of $\eps$, $\eps^2$, and $o(\eps^2)$. To that end, we first compute the PDE satisfied by $\overline \mbh(\eps^{-1}(z-h(y)))$ in Fermi coordinates $(y, z)$ off $\Gamma$; cf. \cite[(2.18)]{ChodoshMantoulidis:multiplicity-one}. For simplicity, we write
\begin{align*}
	\overline \mbh_\eps(y,z) & := \overline \mbh(\eps^{-1}(z-h(y))), \\
	\overline \mbh_\eps'(y,z) & := \overline \mbh'(\eps^{-1}(z-h(y))), \\
	\overline \mbh_\eps''(y,z) & := \overline \mbh''(\eps^{-1}(z-h(y))), \\
	\overline \mbh_\eps'''(y,z) & := \overline \mbh'''(\eps^{-1}(z-h(y))),
\end{align*}
and
\[ \mathbf{I}_\eps := (-3\eps^{1-\delta_*}, 3\eps^{1-\delta_*}). \]
Note that, by \eqref{eq:mult.one.h.initial.est}, $\overline \mbh_\eps(y, z) = \pm 1$ for $z \not \in \mathbf{I}_\eps$. On $B_{19}^\Gamma \times (-1, 1)$:
\begin{align} 
	\eps^2 \Delta \overline \mbh_\eps
		& = \eps^2 (\Delta_{g_z} + H_z \partial_z + \partial_z^2) \overline \mbh_\eps \nonumber \\
		& = (1 + |\nabla_{g_z} h|^2) \overline \mbh_\eps'' + \eps (H_z - \Delta_{g_z} h) \overline \mbh_\eps'  \nonumber \\
		& = W'(\overline \mbh_\eps) + \eps (H_\Gamma - \Delta_\Gamma h) \overline \mbh_\eps' \nonumber \\
		& \qquad + \eps (H_z - H_\Gamma) \overline \mbh_\eps' - \eps(\Delta_{g_z} h - \Delta_\Gamma h) \overline \mbh_\eps' + |\nabla_{g_z} h|^2 \overline \mbh_\eps'' + (\overline{\mbh}_{\eps}'' - W'(\overline{\mbh}_\eps)). \nonumber
\end{align}
Taylor expanding in around $z=0$ (i.e., around $\Gamma$), using the Riccati equation, \eqref{eq:mult.one.sff.bounds},  \eqref{eq:mult.one.ddt.connection}, \eqref{eq:mbh.cutoff.estimates}, \eqref{eq:mult.one.h.initial.est} we deduce that on $B^\Gamma_{19} \times (-1, 1)$:
\begin{align} \label{eq:mult.one.mbh.pde}
	\eps^2 \Delta \overline \mbh_\eps
		& = W'(\overline \mbh_\eps) + \eps (H_\Gamma - \Delta_\Gamma h) \overline \mbh_\eps' - \eps (|\sff_\Gamma|^2 + \ricc(\bm{n}_\Gamma, \bm{n}_\Gamma)) z   \overline \mbh_\eps' \nonumber \\
		& \qquad + O_{1,0,\alpha,\eps}(\eps \nabla_\Gamma^2 h, \nabla_\Gamma h) z \overline \mbh_\eps' + (O_{1,0,\alpha,\eps}(\nabla_\Gamma h))^2 \overline \mbh_\eps'' + O_{1,0,\alpha,\epsilon}(\eps^3).
\end{align}
Throughout, $O_{1,0,\alpha,\eps}(\{f_j\}_j)$ denotes a term $\mathcal{R}$ that is bounded by
\begin{multline} \label{eq:mult.one.bigO.notation}
	|\cR| \leq C \sum_j |f_j|, \; \Vert \cR \Vert_{C^{0,\alpha}_\eps} \leq C \sum_j \Vert f_j \Vert_{C^{0,\alpha}_\eps}, \; |\eps \partial_{y_i} \cR| \leq C \sum_j |f_j| + |\eps \partial_{y_i} f_j| \\
	\Vert \eps \partial_{y_i} \cR \Vert_{C^{0,\alpha}_\eps} \leq C \sum_j \Vert f_j \Vert_{C^{0,\alpha}_{\eps}} + \Vert \eps f_j \Vert_{C^{0,\alpha}_{\eps}}
\end{multline}
in the domain in question, with $C > 0$ fixed as $\eps \to 0$. We emphasize that derivatives in \eqref{eq:mult.one.bigO.notation} are only taken along  directions $y_i$ parallel to $\Gamma$, $i = 1, \ldots, n-1$, (because we will sometimes wish to differentiate along $y_i$) and that the H\"older seminorms are standard ($\eps$-weighted) H\"older seminorms in both $y$ and $z$, as in \eqref{eq:mult.one.ckalpha.notation} (because we will  use Schauder theory). In what follows, \eqref{eq:mult.one.sff.bounds} gets used repeatedly though implicitly when obtaining $O_{1,0,\alpha,\eps}$ bounds.

Throughout this work, we will  frequently rely on the fact that
\begin{equation} \label{eq:mbh.times.polynomial}
	\sup_{z \in \RR} |z|^k |\mbh^{(\ell)}(z)| < \infty \text{ for all } k,  \ell \in \NN, \; \ell \geq 1,
\end{equation}
to control terms such as $z \overline \mbh_\eps'$; this estimate follows from the exponential decay of $\mbh^{(\ell)}$.

Following \cite{ChodoshMantoulidis:multiplicity-one}, we set
\[ \phi := u - \overline \mbh_\eps. \]
Together, \eqref{eq:ac.pde.h} and \eqref{eq:mult.one.mbh.pde} imply that, on $B_{19}^\Gamma \times (-1, 1)$:
\begin{align} 
	\eps^2 \Delta \phi 
		& = \eps \mfh + W'(u) - W'(\overline \mbh_\eps) \nonumber \\
		& \qquad - \eps (H_\Gamma - \Delta_\Gamma h) \overline \mbh_\eps' + \eps (|\sff_\Gamma|^2 + \ricc(\bm{n}_\Gamma, \bm{n}_\Gamma)) z \overline \mbh_\eps' \nonumber \\
		& \qquad + O_{1,0,\alpha,\eps}(\eps \nabla_\Gamma^2 h, \nabla_\Gamma h) \overline \mbh_\eps' + (O_{1,0,\alpha,\eps}(\nabla_\Gamma h))^2 \overline \mbh_\eps'' + O_{1,0,\alpha,\eps}(\eps^3) \nonumber \\
		& = \eps \mfh + W''(\overline \mbh_\eps) \phi + \tfrac12 W'''(\overline \mbh_\eps) \phi^2 +  (O_{1,0,\alpha,\eps}(\phi))^3 \nonumber \\
		& \qquad - \eps (H_\Gamma - \Delta_\Gamma h) \overline \mbh_\eps' + \eps (|\sff_\Gamma|^2 + \ricc(\bm{n}_\Gamma, \bm{n}_\Gamma)) z \overline \mbh_\eps' \nonumber \\
		& \qquad + O_{1,0,\alpha,\eps}(\eps \nabla_\Gamma^2 h,  \nabla_\Gamma h) \overline \mbh_\eps' + (O_{1,0,\alpha,\eps}(\nabla_\Gamma h))^2 \overline \mbh_\eps'' + O_{1,0,\alpha,\eps}(\eps^3), \nonumber
\end{align}
i.e.,
\begin{align} \label{eq:mult.one.phi.pde}
	& \eps^2 \Delta \phi - W''(\overline \mbh_\eps) \phi \nonumber \\
	& \qquad = \eps \mfh - \eps (H_\Gamma - \Delta_\Gamma h) \overline \mbh_\eps' \nonumber \\
	& \qquad + \eps (|\sff_\Gamma|^2 + \ricc(\bm{n}_\Gamma, \bm{n}_\Gamma)) z \overline \mbh_\eps' + \tfrac12 W'''(\overline \mbh_\eps) \phi^2 \nonumber \\
	& \qquad + (O_{1,0,\alpha,\eps}(\phi))^3 + O_{1,0,\alpha,\eps}(\eps \nabla_\Gamma^2 h, \nabla_\Gamma h) z \overline \mbh_\eps' + (O_{C^{1,\alpha}_\eps}(\nabla_\Gamma h))^2 \overline \mbh_\eps'' + O_{1,0,\alpha,\eps}(\eps^3).
\end{align}

\begin{rema} \label{rema:mult.one.order}
	We split up the right hand side of \eqref{eq:mult.one.phi.pde} into three lines according to the order of contribution of each term once sharp estimates have been derived. The first line is $O(\eps)$, the second is $O(\eps^2)$, and the third is $o(\eps^2)$.
\end{rema}

Following \cite[(10.2)]{WangWei}, we project \eqref{eq:mult.one.phi.pde} onto $\Gamma$ by dotting with $\overline \mbh_\eps'$ along the $z$ coordinate (see Appendix \ref{app:mult.one.h.pde}). We get that, on $B_{19}^\Gamma$:
\begin{align} \label{eq:mult.one.h.pde}
	& 2 \mfh(\cdot, 0) - (\overline \energyunit + \langle \phi, \overline \mbh_\eps'' \rangle_{L^2(\RR)}) (H_\Gamma - \Delta_\Gamma h)  \nonumber \\
	& \qquad = - 2 \big[ (\partial_z \mfh(\cdot, 0)) + (|\sff_\Gamma|^2 + \ricc(\bm{n}_\Gamma, \bm{n}_\Gamma)) \big] h \nonumber \\
	& \qquad + O_{1,0,\alpha,\eps}(\eps^2 \nabla_\Gamma^2 \phi, \eps \nabla_\Gamma \phi) + \eps^{-1} (O_{1,0,\alpha,\eps}(\eps \nabla_\Gamma \phi))^2 + \eps^{-1} (O_{1,0,\alpha,\eps}(\phi))^3 \nonumber \\
	& \qquad + \eps \cdot O_{1,0,\alpha,\eps}(\phi) + \eps^{-1} (O_{1,0,\alpha,\eps}(\phi))^2 + O_{1,0,\alpha,\eps}(\eps^2).
\end{align}
We will later refine the $\eps^{-1} (O_{1,0,\alpha,\eps}(\phi))^2$ term. By \eqref{eq:mult.one.h.pde} we have, on $B_{19}^\Gamma$:
\begin{multline} \label{eq:mult.one.h.pde.2}
	\eps (\overline \energyunit + \langle \phi, \overline \mbh_\eps'' \rangle_{L^2(\RR)}) (H_\Gamma - \Delta_\Gamma h) = 2 \eps \mfh(\cdot, 0) \\
	+ O_{1,0,\alpha,\eps}(\eps^2) + (O_{1,0,\alpha,\eps}(\eps^2 \nabla^2_\Gamma \phi, \eps \nabla_\Gamma \phi, \phi))^2,
\end{multline}

This form of \eqref{eq:mult.one.phi.pde}, \eqref{eq:mult.one.h.pde.2} is convenient (and powerfully exploited in \cite{WangWei}) in that one can use the stability of the one-dimensional model operator $\tfrac{d^2}{dt^2} - W''(\mbh)$ to estimate $\phi$ in terms of the right hand side of $\eps^2 \Delta \phi - W''(\overline \mbh_\eps)$, while at the same time using \eqref{eq:mult.one.h.pde.2} to estimate the term $\eps (H_\Gamma - \Delta_\Gamma h)\overline \mbh_\eps'$ that appears in the right hand side of \eqref{eq:mult.one.phi.pde}; see Appendix \ref{app:estimate.orthogonal.to.h} for an exposition in this multiplicity-one setting. By an iteration scheme we find that:
\begin{equation} \label{eq:mult.one.h.phi.estimates}
	\eps^{-1} \Vert h \Vert_{C^{2,\alpha}_\eps(B_{18}^\Gamma)} + \Vert \phi \Vert_{C^{2,\alpha}_\eps(B_{18}^\Gamma \times (-1, 1))} = O(\eps).
\end{equation}
We may in turn plug this estimate into \eqref{eq:mult.one.h.pde} to also find that:
\begin{equation} \label{eq:mult.one.mean.curv.estimates}
	\Vert H_\Gamma - \Delta_\Gamma h - 2 \energyunit^{-1} \mfh(\cdot, 0) \Vert_{C^{0,\alpha}_\eps(B^\Gamma_{18})} = O(\eps).
\end{equation}
While the $\eps \mfh$ term of \eqref{eq:ac.pde.h} curtails the estimate one can get on $\phi$ in \eqref{eq:mult.one.h.phi.estimates} (cf. \cite[Section 15]{WangWei}), one does still get the improved estimate on horizontal derivatives of $\phi$ as in \cite[Section 13]{WangWei}. The point is that, when we take the tangential derivative of \eqref{eq:mult.one.phi.pde}, the effect of the tangential derivative of the term that was previously the bottleneck, $\eps \mfh$, does not scale like $O(\eps^{-1})$ as all the other terms do. Thus, as in \cite[Section 13]{WangWei}  (see Appendix \ref{app:estimate.orthogonal.to.h}) one has:
\begin{equation} \label{eq:mult.one.h.phi.horizontal.estimates}
	\Vert \nabla_\Gamma h \Vert_{C^{2,\alpha}_\eps(B_{17}^\Gamma)} + \Vert \eps \nabla_\Gamma \phi \Vert_{C^{2,\alpha}_\eps(B_{17}^\Gamma \times (-1, 1))} = O(\eps^2).
\end{equation}
This in turn lets us refine \eqref{eq:mult.one.mean.curv.estimates} to (cf. \cite[Section 15]{WangWei})\footnote{\label{foot:c3alpha} We note that \cite[Section 15]{WangWei} only states the $C^{0,\alpha}_\eps$ estimates. Higher order estimates were derived in \cite[Section 13]{WangWei} in the form of $W^{1,p}_\eps$ estimates, and were allured to in \cite[Section 7]{WangWei2} in the form of $C^{1,\alpha}_\eps$ estimates.}
\begin{equation} \label{eq:mult.one.mean.curv.estimates.refined}
	H_\Gamma - 2 \energyunit^{-1} \mfh(\cdot, 0) = O_{C^{1,\alpha}_\eps}(\eps).
\end{equation}

\begin{rema} \label{rema:mult.one.mean.curv.estimates.tau}
	One can similarly estimate the mean curvature of $\{ u = \tau \}$ for $\tau < \tfrac12 \beta_0$ by working with $h + \eps \tau$ in place of $h$ in Fermi coordinates off $\Gamma_\tau := \{ u = \tau \}$.
\end{rema}

Now, in order to get the full $\eps$-term in \eqref{eq:u.expansion}, we adapt (and simplify) the ansatz of \cite{AlikakosFuscoStefanopoulos96}\footnote{A key difference with \cite{AlikakosFuscoStefanopoulos96} is that we are trying to understand an arbitrary solution, not a particular solution with tailored asymptotics.} and consider an auxiliary correction function: the unique bounded solution $\mbi : \RR \to \RR$ of 
\begin{equation} \label{eq:mult.one.i.ode}
	\mbi''(t) - W''(\mbh(t)) \mbi(t) = 1 - 2 \energyunit^{-1} \mbh'(t), \; \mbi(0) = 0.
\end{equation}
This $\mbi$ converges exponentially to $\mbi(\pm \infty) = -1/W''(\pm 1)$ as $|t| \to \infty$. For the existence and exponential asymptotics of $\mbi$ we refer the reader to \cite[Lemma B.1, Remark B.3]{AlikakosFuscoStefanopoulos96}. Having an exponential tail, as $\mbh$ does, $\mbi$ also satisfies:
\begin{equation} \label{eq:mbi.times.polynomial}
	\sup_{z \in \RR} |z|^k |\mbi^{(\ell)}(z)| < \infty \text{ for all } k, \ell \in \NN, \; \ell \geq 1,
\end{equation}
and, moreover, cutting off $\mbi$ as we did $\mbh$ in \eqref{eq:mbh.cutoff}, we denote:
\begin{equation} \label{eq:mbi.cutoff}
	\overline \mbi(t) := \chi(\eps^{\delta_*} t) \mbi(t) + \mbi(\pm \infty) (1 - \chi(\eps^{\delta_*} t)),
\end{equation}
so that
\begin{equation} \label{eq:mbi.cutoff.estimates}
	|\overline \mbi'' - W''(\overline \mbh) \overline \mbi - 1 + 2 \energyunit^{-1} \overline \mbh'(t)|_{C^3(\RR)} = O(\eps^3).
\end{equation}
We similarly denote:
\begin{align*}
	\overline \mbi_\eps(y,z) & := \overline \mbi(\eps^{-1}(z-h(y))), \\
	\overline \mbi_\eps'(y, z) & ;= \overline \mbi'(\eps^{-1}(z-h(y))), \\
	\overline \mbi_\eps''(y, z) & := \overline \mbi''(\eps^{-1}(z-h(y))).
\end{align*}
As before, we compute, using \eqref{eq:mult.one.sff.bounds}, \eqref{eq:mult.one.ddt.connection}, \eqref{eq:mbh.cutoff.estimates},  \eqref{eq:mbh.times.polynomial}, \eqref{eq:mbi.times.polynomial}, \eqref{eq:mbi.cutoff.estimates}:
\begin{align}
	\eps^2 \Delta (\eps \mfh \overline \mbi_\eps) 
		& = \eps^3 \mfh \Delta \overline \mbi_\eps + 2 \eps^3 \nabla \mfh \cdot \nabla \overline \mbi_\eps + \eps^3 (\Delta \mfh) \overline \mbi_\eps \nonumber \\
		& = \eps^3 \mfh (\Delta_{g_z} + H_z \partial_z + \partial_z^2) \overline \mbi_\eps + 2 \eps^2 (\partial_z \mfh - \nabla_{g_z} \mfh \cdot \nabla_{g_z} h) \overline \mbi'_\eps + O_{1,0,\alpha,\eps}(\eps^3) \nonumber \\
		& = \eps \mfh(|\nabla_{g_z} h|^2 \overline \mbi_\eps'' - \eps (\Delta_{g_z} h) \overline \mbi_\eps') + \eps^2 \mfh H_z \overline \mbi_\eps' + \eps \mfh \overline \mbi_\eps'' + 2 \eps^2 (\partial_z \mfh) \overline \mbi_\eps' + O_{1,0,\alpha,\eps}(\eps^3) \nonumber \\
		& = \eps \mfh (W''(\overline \mbh_\eps) \overline \mbi_\eps + 1 - 2 \energyunit^{-1} \overline \mbh_\eps') + \eps^2 \mfh H_z \overline \mbi_\eps' + 2 \eps^2 (\partial_z \mfh) \overline \mbi_\eps' + O_{1,0,\alpha,\eps}(\eps^3) \nonumber \\
		& = W''(\overline \mbh_\eps) \eps \mfh \overline \mbi_\eps + \eps \mfh - 2 \eps \mfh \energyunit^{-1} \overline \mbh_\eps' + \eps^2 \mfh H_z \overline \mbi_\eps' + 2 \eps^2 (\partial_z \mfh) \overline \mbi_\eps' + O_{1,0,\alpha,\eps}(\eps^3) \nonumber \\
		& = W''(\overline \mbh_\eps) \eps \mfh \overline \mbi_\eps + \eps \mfh - 2 \eps (\mfh(\cdot, 0) + (\partial_z \mfh)(\cdot, 0) z + O_{1,0,\alpha,\eps}(1) z^2) \energyunit^{-1} \overline \mbh_\eps' \nonumber \\
		& \qquad + \eps^2 (\mfh(\cdot, 0) + O_{1,0,\alpha,\eps}(1)z) (H_\Gamma + O_{1,0,\alpha,\eps}(1) z) \overline \mbi_\eps' \nonumber \\
		& \qquad + 2 \eps^2 ((\partial_z \mfh)(\cdot, 0) + O_{1,0,\alpha,\eps}(1) z) \overline \mbi_\eps' + O_{1,0,\alpha,\eps}(\eps^3) \nonumber \\
		& = W''(\overline \mbh_\eps) \eps \mfh \overline \mbi_\eps + \eps \mfh - 2 \eps \energyunit^{-1} \mfh(\cdot, 0) \overline \mbh_\eps' \nonumber \\
		& \qquad - 2 \eps \energyunit^{-1} (\partial_z \mfh)(\cdot, 0) z \overline \mbh_\eps' + \eps^2 \mfh(\cdot, 0) H_\Gamma \overline \mbi_\eps' + 2 \eps^2 (\partial_z \mfh)(\cdot, 0) \overline \mbi_\eps' + O_{1,0,\alpha,\eps}(\eps^3). \nonumber
\end{align}
i.e.,
\begin{align}  \label{eq:mult.one.mbi.pde}
	& \eps^2 \Delta (\eps \mfh \overline \mbi_\eps) - W''(\overline \mbh_\eps) \eps \mfh \overline \mbi_\eps \nonumber \\
	& \qquad = \eps \mfh - 2 \eps \energyunit^{-1} \mfh(\cdot, 0) \overline \mbh_\eps' \nonumber \\
	& \qquad - 2 \eps \energyunit^{-1} (\partial_z \mfh)(\cdot, 0) z \overline \mbh_\eps' + \eps^2 \big[ \mfh(\cdot, 0) H_\Gamma + 2 (\partial_z \mfh)(\cdot, 0) \big] \overline \mbi_\eps' + O_{1,0,\alpha,\eps}(\eps^3).
\end{align}
Plugging \eqref{eq:mult.one.mbi.pde} into \eqref{eq:mult.one.phi.pde} gives an equation for
\[ \hat \phi := \phi - \eps \mfh \overline \mbi_\eps \, ( = u - \overline \mbh_\eps - \eps \mfh \overline \mbi_\eps), \]
which is:
\begin{align} \label{eq:mult.one.phihat.pde}
	& \eps^2 \Delta \hat \phi - W''(\overline \mbh_\eps) \hat \phi - \tfrac12 W'''(\overline \mbh_\eps) \hat \phi(\hat \phi + 2 \eps \mfh \overline \mbi_\eps) \nonumber \\
	& \qquad = \eps \big[ 2 \energyunit^{-1} \mfh(\cdot, 0) - (H_\Gamma - \Delta_\Gamma h) \big] \overline \mbh_\eps'  \nonumber \\
	& \qquad + \eps \big[ (|\sff_\Gamma|^2 + \ricc(\bm{n}_\Gamma, \bm{n}_\Gamma)) + 2 \energyunit^{-1} (\partial_z \mfh)(\cdot, 0) \big] z \overline \mbh_\eps' \nonumber \\
	& \qquad - \eps^2 \big[ \mfh(\cdot, 0) H_\Gamma + 2 (\partial_z \mfh)(\cdot, 0) \big] \overline \mbi_\eps' \nonumber \\
	& \qquad + \tfrac12 \eps^2 \mfh^2 W'''(\overline \mbh_\eps) \overline \mbi_\eps^2 + O_{1,0,\alpha,\eps}(\eps^3). 
\end{align}
Notice that all terms on the right hand side are $O_{C^{0,\alpha}_\eps}(\eps^2)$, while the extra term on the left hand side that is not part of the stability operator is $o_{C^{2,\alpha}_\eps}(1) \hat \phi$. As before (see Appendix \ref{app:estimate.orthogonal.to.h}), 
\begin{equation} \label{eq:mult.one.phihat.estimate}
	\Vert \hat \phi \Vert_{C^{2,\alpha}_\eps(B^\Gamma_{16} \times (-1,1))} = O(\eps^2).
\end{equation}
This in turn lets us further refine \eqref{eq:mult.one.mean.curv.estimates.refined} (see Appendix \ref{app:mult.one.h.pde}) to (cf. \cite[Lemma 5.5]{ChodoshMantoulidis:multiplicity-one})
\begin{equation} \label{eq:mult.one.mean.curv.estimates.more.refined}
	\Vert H_\Gamma - \Delta_\Gamma h - 2 \energyunit^{-1} \mfh(\cdot, 0) \Vert_{C^{0,\alpha}_{\eps}(B^\Gamma_{16})} = O(\eps^2).
\end{equation}

Finally, we compute the $\eps^2$-order terms in \eqref{eq:u.expansion}. To do so, we consider the unique bounded ODE solutions of
\begin{equation} \label{eq:mult.one.j.ode}
	\mbj''(t) - W''(\mbh(t)) \mbj(t) = t \mbh'(t), \; \mbj(0) = 0,
\end{equation}
\begin{equation} \label{eq:mult.one.k.ode}
	\mbk''(t) - W''(\mbh(t)) \mbk(t) = \mbi'(t), \; \mbk(0) = 0,
\end{equation}
\begin{equation} \label{eq:mult.one.l.ode}
	\mbl''(t) - W''(\mbh(t)) \mbl(t) = W'''(\mbh(t)) \mbi(t)^2, \; \mbl(0) = 0.
\end{equation}
Again, we defer to \cite[Lemma B.1, Remark B.3]{AlikakosFuscoStefanopoulos96} for the existence and exponential asymptotics of $\mbj$, $\mbk$, $\mbl$. Similarly to before, denote
\[ \mbj_\eps(y, z) := \mbj(\eps^{-1}(z-h(y))), \; \mbk_\eps(y, z) := \mbk(\eps^{-1}(z-h(y))), \mbl_\eps(y, z) := \mbl(\eps^{-1}(z-h(y))). \]
(We do not need to truncate these ODE solutions.) Denote:
\begin{align*}
	\tilde \phi 
		& := \hat \phi - \eps^2 \big[ (|\sff_\Sigma|^2 + \ricc(\bm{n}_\Sigma, \bm{n}_\Sigma)) \circ \Pi_\Sigma + 2 \energyunit^{-1} (\partial_z \mfh)(\cdot, 0) \big] \mbj_\eps \\
		& \qquad + \eps^2 \big[ \mfh(\cdot, 0) (H_\Sigma \circ \Pi_\Sigma) + 2 (\partial_z \mfh)(\cdot, 0) \big] \mbk_\eps - \tfrac12 \eps^2 \mfh^2 \mbl_\eps,
\end{align*} 
where $\Sigma$ is the $C^{2,\alpha}$ limit of $\Gamma$ as $\eps \to 0$, which has $H_\Sigma = 2 \energyunit^{-1} \mfh|_\Sigma$, and $\Pi_\Sigma$ is the projection onto $\Sigma$. Working as we did to get to \eqref{eq:mult.one.mbi.pde}, and using \eqref{eq:mult.one.h.phi.estimates}, \eqref{eq:mult.one.h.phi.horizontal.estimates}, \eqref{eq:mult.one.mean.curv.estimates.more.refined}, \eqref{eq:mult.one.j.ode}, \eqref{eq:mult.one.k.ode}, \eqref{eq:mult.one.l.ode}, we find that
\begin{equation} \label{eq:mult.one.phitilde.pde}
	\eps^2 \Delta \tilde \phi - W''(\overline \mbh_\eps) \tilde \phi = o(\eps^2),
\end{equation}
near $\Gamma$ so, arguing as in \cite[Proposition 5.6]{ChodoshMantoulidis:multiplicity-one}, we find that, near $\Gamma$:
\begin{equation} \label{eq:mult.one.phitilde.estimate}
	\tilde \phi = o(\eps^2).
\end{equation}

\section{Lower bounds for eigenvalues of $\delta^2 E_{\eps,\mfh}$ as $\eps \to 0$}

\label{sec:index.lower}

\begin{theo} \label{theo:index.lower}
	Let $(M^n, g)$ be a closed Riemannian manifold. Consider a sequence of critical points $u_i$ to $E_{\eps_i, \mfh_i}$ with $\eps_i \to 0$, $\Vert \mfh_i \Vert_{C^{3,\alpha}(M)} + E_{\eps_i,\mfh_i}[u_i] \leq E$ for all $i$, and a fixed $\alpha \in (0, 1)$. Let $V$ denote the limiting varifold and $\Omega$ denote the limiting domain of $(u_i, \eps_i)$, and $\mfh$ denote the limiting $\mfh_i$ after passing to a subsequence $\eps_{i'} \to 0$. Assume $V$ is a multiplicity-one varifold associated to a smooth hypersurface $\Sigma \subset M$. Fix $\lambda_0 \in \RR$. For sufficiently small $\eps > 0$,
	\begin{multline*}
		\# \{ \text{eigenvalues } \lambda \leq \lambda_0 \text{ (with multiplicity) of } \delta^2 A_\mfh[V; \Omega] \} \\
		\geq \# \{ \text{eigenvalues } \lambda \leq \eps \lambda_0 \text{ (with multiplicity) of } \delta^2 E_{\eps,\mfh}[u] \}.
	\end{multline*}
\end{theo}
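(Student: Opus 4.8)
The plan is to run the standard min--max comparison between the two spectra, now powered by the sharp-to-$o(\eps^2)$ description of $u$ from Section~\ref{sec:mult.one}. By the characterization \eqref{eq:spectrum.h.smooth} it is enough to show that whenever, along a subsequence $\eps_{i'}\to 0$ (abbreviated $\eps,u,\mfh$), $\delta^2 E_{\eps,\mfh}[u]$ has at least $N$ eigenvalues $\leq\eps\lambda_0$, then $\lambda_N(\delta^2 A_{2\energyunit^{-1}\mfh}[V;\Omega])\leq\lambda_0$ --- applied with $N=1+\#\{\text{eigenvalues}\leq\lambda_0\text{ of }\delta^2 A_{2\energyunit^{-1}\mfh}[V;\Omega]\}$ (recall $(V;\Omega)$ is a critical point of $A_{2\energyunit^{-1}\mfh}$ by \eqref{eq:hutchinson.tonegawa.critical}), this yields the asserted inequality for all small $\eps$ by contradiction. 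So let $K$ be the span of the first $N$ eigenfunctions of $\delta^2 E_{\eps,\mfh}[u]$, so that $\delta^2 E_{\eps,\mfh}[u]\{v,v\}\leq\eps\lambda_0\int_M v^2\,d\mu_g$ for all $v\in K$. Working in the Fermi coordinates $(y,z)$ off $\Gamma=\{u=0\}$ of Section~\ref{sec:mult.one}---available near all of the smooth hypersurface $\Sigma=\supp\Vert V\Vert$, since $\sing V=\emptyset$ and $\Gamma\to\Sigma$ in $C^{2,\alpha}$ by \eqref{eq:mult.one.h.phi.estimates}, \eqref{eq:mult.one.mean.curv.estimates.refined}---define the linear projection $\Pi\colon K\to C^1(\Sigma;N\Sigma)$ sending $v$ to the translation-mode coefficient
\[
	(\Pi v)(y):=\Big(\int_\RR\overline\mbh_\eps'(y,z)^2\,dz\Big)^{-1}\int_\RR v(y,z)\,\overline\mbh_\eps'(y,z)\,dz,
\]
transplanted from $\Gamma$ to $\Sigma$ via the $C^{2,\alpha}$ normal-graph identification. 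The theorem will follow from: (i) $\Pi$ is injective for small $\eps$, so $W:=\Pi(K)$ is $N$-dimensional; and (ii) for $v\in K$ and $f:=\Pi v$, with $o(1)$-terms uniform over $v\in K$ as $\eps\to 0$,
\[
	\int_M v^2\,d\mu_g=(1+o(1))\,\energyunit\,\eps\int_\Sigma f^2\,d\mu_\Sigma,
\]
\[
	\delta^2 A_{2\energyunit^{-1}\mfh}[V;\Omega]\{f\bm{n},f\bm{n}\}\leq\frac{1}{\energyunit\,\eps^2}\,\delta^2 E_{\eps,\mfh}[u]\{v,v\}+o(1)\int_\Sigma f^2\,d\mu_\Sigma.
\]
Granting these, $\lambda_N(\delta^2 A_{2\energyunit^{-1}\mfh}[V;\Omega])\leq\max_{f\in W\setminus\{0\}}\big(\delta^2 A_{2\energyunit^{-1}\mfh}[V;\Omega]\{f\bm{n},f\bm{n}\}/\int_\Sigma f^2\,d\mu_\Sigma\big)\leq\lambda_0+o(1)$, and letting $\eps\to 0$ along the subsequence gives $\leq\lambda_0$.

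For the injectivity (i), I would decompose $v=\overline\mbh_\eps'\,(\Pi v)+w$ with $w$ pointwise $L^2(dz)$-orthogonal to $\overline\mbh_\eps'$. The scalar model operator $-\tfrac{d^2}{dt^2}+W''(\mbh)$ has one-dimensional kernel $\lspan\{\mbh'\}$ with a strictly positive spectral gap above it; combining this with \eqref{eq:mbh.cutoff.estimates}, the $C^{2,\alpha}_\eps$-proximity $u=\overline\mbh_\eps+o(1)$ from Section~\ref{sec:mult.one}, the concentration bound $\eps|\nabla u|\geq c_0^{-1}$ of \eqref{eq:mult.one.assumptions.ii}, and $W''\geq\kappa>0$ away from $\{|u|<1-\beta\}$, one gets $\delta^2 E_{\eps,\mfh}[u]\{w,w\}\geq c\,\eps^{-1}\int_M w^2\,d\mu_g$ for a fixed $c>0$ and all small $\eps$ (covering a neighborhood of $\Sigma$ by Fermi charts, and using standard exponential decay to see that $K$ concentrates near $\Sigma$, handles the global-to-local passage). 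If $\Pi v=0$ then $v=w$, and $c\,\eps^{-1}\int_M v^2\leq\delta^2 E_{\eps,\mfh}[u]\{v,v\}\leq\eps\lambda_0\int_M v^2$ forces $v\equiv 0$ once $c\,\eps^{-1}>\eps\lambda_0$.

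The comparison (ii) is the heart, and is where the Section~\ref{sec:mult.one} analysis is indispensable. The ``good'' building block near the translation mode is not $\overline\mbh_\eps'$ but $\eps\,\partial_z u=\overline\mbh_\eps'+\eps\mfh\,\overline\mbi_\eps'+O(\eps^2)$: differentiating the PDE \eqref{eq:ac.pde.h} in $z$ and using the Fermi-coordinate Taylor expansions, the Riccati equation and \eqref{eq:mult.one.sff.bounds}, the refined mean-curvature and horizontal estimates \eqref{eq:mult.one.mean.curv.estimates.refined}, \eqref{eq:mult.one.h.phi.horizontal.estimates}, and the $o(\eps^2)$-expansion $u=\overline\mbh_\eps+\eps\mfh\overline\mbi_\eps+\eps^2[\cdots]+o(\eps^2)$ (that is, \eqref{eq:mult.one.phihat.estimate}, \eqref{eq:mult.one.phitilde.estimate}), one computes that $(-\eps^2\Delta+W''(u))(\partial_z u)=-\eps\,\partial_{\bm{n}}\mfh-\eps(|\sff_\Gamma|^2+\ricc(\bm{n},\bm{n}))\,\overline\mbh_\eps'+O(\eps^2)$ in $C^0$. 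Writing $v=\eps(\partial_z u)f+w''$---where $w''=w+O(\eps^2)f$ is still essentially $L^2(dz)$-orthogonal to $\overline\mbh_\eps'$---and expanding $\delta^2 E_{\eps,\mfh}[u]\{v,v\}$ into diagonal and cross blocks, one finds: the block $\delta^2 E_{\eps,\mfh}[u]\{\eps(\partial_z u)f,\eps(\partial_z u)f\}$, after integrating out $z$ and invoking the displayed defect identity, the ODE \eqref{eq:mult.one.i.ode}, and the identity $\int_\RR(\mbh'')^2=-\int_\RR W''(\mbh)(\mbh')^2$, equals $\energyunit\,\eps^2\,\delta^2 A_{2\energyunit^{-1}\mfh}[\Sigma;\Omega]\{f\bm{n},f\bm{n}\}+o(\eps^2)\Vert f\Vert_{L^2(\Sigma)}^2$, the curvature-like term $\tfrac{\partial}{\partial\bm{n}}(2\energyunit^{-1}\mfh)$ of \eqref{eq:second.variation.h.smooth.scalar} being produced precisely by the $-\eps\,\partial_{\bm{n}}\mfh$ in the defect; the cross block $\delta^2 E_{\eps,\mfh}[u]\{\eps(\partial_z u)f,w''\}$ is $o(\eps^2)\Vert f\Vert_{L^2(\Sigma)}^2$ thanks to $w''\perp\overline\mbh_\eps'$ and $\nabla_\Sigma f\perp\partial_z$; and the remaining block $\delta^2 E_{\eps,\mfh}[u]\{w'',w''\}$ is $\geq 0$ and is discarded. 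The normalization identity, and the sharper $\Vert w\Vert_{L^2(M)}^2=o(\eps)\Vert f\Vert_{L^2(\Sigma)}^2$ needed for it, fall out in the same pass, once the spectral gap of (i) is combined with an elliptic bootstrap yielding $\Vert f\Vert_{H^2(\Sigma)}\lesssim\Vert f\Vert_{L^2(\Sigma)}$ for $v\in K$; alternatively the normalization identity follows from $\eps(\bm{X}\cdot\nabla u)^2\,d\mu_g\weaklyto\energyunit|\bm{X}|^2\,d\Vert V\Vert$ (cf.\ Section~\ref{sec:index.upper}).

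The main obstacle is carrying out the bookkeeping in (ii) so that every error term is genuinely $o(\eps^2)$ after the $\energyunit^{-1}\eps^{-2}$ rescaling: in essence the construction must \emph{reverse} the inner-variation identity \eqref{eq:second.variation.ac.limit} used for the upper bound in Theorem~\ref{theo:index.upper}, with the nonnegative $\int(\nabla_{\bm{n}}\bm{X}\cdot\bm{n})^2\,d\Vert V\Vert$ defect appearing there playing the role of the discarded nonnegative block $\delta^2 E_{\eps,\mfh}[u]\{w'',w''\}$ here. This is what forces the asymptotic expansion of $u$ to be pushed all the way to $o(\eps^2)$, and it is precisely the \emph{multiplicity-one} hypothesis that makes the Section~\ref{sec:mult.one} machinery available in the first place---a single smooth $\Gamma$, with no $\eps^2|\log\eps|$ inter-sheet interference term (cf.\ \cite{delPinoKowalczykWeiYang:interface, WangWei2, ChodoshMantoulidis:multiplicity-one}).
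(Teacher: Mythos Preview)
Your outline is correct and follows essentially the same architecture as the paper: decompose each test function into a translation-mode part and an orthogonal remainder, prove the three block estimates (translation block $=\eps^2\energyunit\,\cQ_\Gamma+o(\eps^2)\Vert\cdot\Vert_{H^1}^2$; cross block $=o(\eps^2)\Vert f\Vert_{H^1}^2+o(1)\int(\eps|\nabla\omega|^2+\eps^{-1}\omega^2)$; perpendicular block $\geq\gamma_1\int(\eps|\nabla\omega|^2+\eps^{-1}\omega^2)$), and close using a spectral gap. The differences are organizational. First, the paper projects onto $\overline\mbh_\eps'+\eps\mfh\,\overline\mbi_\eps'$ rather than onto $\overline\mbh_\eps'$ (your $\Pi$) or $\eps\partial_z u$ (your ansatz in (ii)); after Section~\ref{sec:mult.one} all three agree to $O(\eps)$ and the first two to $O(\eps^2)$, so this is cosmetic, but the paper's choice makes the parallel-block computation (Lemma~\ref{lemm:index.lower.wpar}) come out most cleanly as it is the honest first-order approximate Jacobi field when $\mfh\not\equiv 0$. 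Second, the paper argues by contradiction: assuming $I_\Sigma<I_\eps$, it picks $w$ in the low $E$-eigenspace with $w^\parallel$ $L^2(\Gamma)$-orthogonal to the first $I_\Sigma$ $A$-eigenfunctions, and then uses the strict spectral gap \eqref{eq:index.lower.positive.gap.gamma} on that complement to directly absorb the $o(\eps^2)\Vert w^\parallel\Vert_{H^1}^2$ error terms. Your direct min-max comparison also works, but your stated (ii) is a touch too optimistic as written: the errors in the parallel and cross blocks are $o(\eps^2)\Vert f\Vert_{H^1}^2$ (plus a perpendicular term absorbed by $\gamma_1$), not $o(\eps^2)\Vert f\Vert_{L^2}^2$. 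The required control $\Vert f\Vert_{H^1}\lesssim\Vert f\Vert_{L^2}$ does follow, but \emph{a posteriori}: since $\delta^2 A_{2\energyunit^{-1}\mfh}\{f\bm n,f\bm n\}\geq\Vert\nabla_\Sigma f\Vert_{L^2}^2-C\Vert f\Vert_{L^2}^2$, the inequality you derive with $o(1)\Vert f\Vert_{H^1}^2$ error already forces $(1-o(1))\Vert\nabla_\Sigma f\Vert^2\leq C'\Vert f\Vert^2$, and one then feeds this back in. So your ``elliptic bootstrap'' is not a separate ingredient but a consequence of the computation; the paper's contradiction layout with the explicit gap $\gamma_3$ makes this absorption transparent in one pass.
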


Note that Theorem \ref{theo:index.lower} together with \eqref{eq:index.upper} also implies:

\begin{coro} \label{coro:index.lower}
	For any $\ell \in \NN$, let $\lambda_\ell(A_\mfh[\Sigma; \Omega])$ and $\lambda_\ell(E_{\eps,\mfh}[u])$ denote the $\ell$-th eigenvalues of $\delta^2 A_\mfh[\Sigma; \Omega]$ and $\delta^2 E_{\eps,\mfh}[u]$, respectively. Then
	\begin{equation} \label{eq:index.lower}
		\lambda_\ell(A_\mfh[\Sigma; \Omega]) = \lim_{\eps \to 0} \eps^{-1} \lambda_\ell(E_{\eps,\mfh}[u]).
	\end{equation}
\end{coro}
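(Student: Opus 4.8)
The plan is to deduce Corollary \ref{coro:index.lower} by combining the two inequalities already established: the upper eigenvalue estimate \eqref{eq:index.upper} from Theorem \ref{theo:index.upper}, and the eigenvalue-counting estimate of Theorem \ref{theo:index.lower}. These get converted into a two-sided bound on the $\ell$-th eigenvalue via the standard min-max/eigenvalue-counting dictionary. Since $V$ is a multiplicity-one varifold carried by a smooth closed hypersurface $\Sigma$, we have $\sing V = \emptyset$, so $\delta^2 A_\mfh[\Sigma;\Omega]$ is the self-adjoint elliptic Jacobi-type operator of \eqref{eq:second.variation.h.smooth.scalar}, with discrete spectrum, and $\lambda_\ell(A_\mfh[\Sigma;\Omega])$ from \eqref{eq:spectrum.h.smooth} is literally its $\ell$-th eigenvalue counted with multiplicity; likewise $\lambda_\ell(E_{\eps,\mfh}[u])$ is the $\ell$-th eigenvalue of \eqref{eq:second.variation.ac}. (One should keep the normalizations of the prescribing functions consistent, i.e.\ identify the $A_\mfh$ here with the $A_{2\energyunit^{-1}\mfh}$ of \eqref{eq:index.upper} after the relevant rescaling.)

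For the inequality $\lambda_\ell(A_\mfh[\Sigma;\Omega]) \geq \limsup_{\eps\to 0}\eps^{-1}\lambda_\ell(E_{\eps,\mfh}[u])$: the quantities $\eps^{-1}\lambda_\ell(E_{\eps,\mfh}[u])$ need not converge a priori, so pass to an arbitrary subsequence along which they do. Applying \eqref{eq:index.upper} along that subsequence gives $\lambda_\ell(A_\mfh[\Sigma;\Omega]) \geq \lim \eps^{-1}\lambda_\ell(E_{\eps,\mfh}[u])$, and since the subsequence was arbitrary the $\limsup$ inequality follows.

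For the reverse inequality $\lambda_\ell(A_\mfh[\Sigma;\Omega]) \leq \liminf_{\eps\to 0}\eps^{-1}\lambda_\ell(E_{\eps,\mfh}[u])$: put $\mu := \liminf_{\eps\to 0}\eps^{-1}\lambda_\ell(E_{\eps,\mfh}[u])$, fix any $\lambda_0 > \mu$, and choose $\eps_j \to 0$ with $\eps_j^{-1}\lambda_\ell(E_{\eps_j,\mfh}[u]) \to \mu$. With $\lambda_0$ fixed, Theorem \ref{theo:index.lower} applies for $\eps$ small; for $j$ large one has $\lambda_\ell(E_{\eps_j,\mfh}[u]) < \eps_j\lambda_0$, so $\delta^2 E_{\eps_j,\mfh}[u]$ has at least $\ell$ eigenvalues (with multiplicity) that are $\leq \eps_j\lambda_0$, hence by Theorem \ref{theo:index.lower} the operator $\delta^2 A_\mfh[V;\Omega]$ has at least $\ell$ eigenvalues $\leq \lambda_0$, i.e.\ $\lambda_\ell(A_\mfh[\Sigma;\Omega]) \leq \lambda_0$. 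Letting $\lambda_0 \downarrow \mu$ gives the claim. Combining the two paragraphs, $\limsup \leq \lambda_\ell(A_\mfh[\Sigma;\Omega]) \leq \liminf$, so $\lim_{\eps\to 0}\eps^{-1}\lambda_\ell(E_{\eps,\mfh}[u])$ exists and equals $\lambda_\ell(A_\mfh[\Sigma;\Omega])$, which is \eqref{eq:index.lower}.

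I expect no genuine obstacle here beyond Theorems \ref{theo:index.upper} and \ref{theo:index.lower}, which are assumed — the deduction is soft. The only points needing care are bookkeeping ones: handling the a priori possible non-convergence of $\eps^{-1}\lambda_\ell(E_{\eps,\mfh}[u])$ via subsequences; turning the ``$\#\{\text{eigenvalues} \le \lambda_0\}$'' statement of Theorem \ref{theo:index.lower} into a bound on the $\ell$-th eigenvalue (legitimate because the spectra are discrete, $\Sigma$ being smooth and closed); and keeping both the $\eps$-rescaling $\lambda \leftrightarrow \eps\lambda$ and the prescribing-function normalization consistent throughout.
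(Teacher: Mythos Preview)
Your proposal is correct and matches the paper's approach: the paper simply states that Corollary~\ref{coro:index.lower} follows from Theorem~\ref{theo:index.lower} together with \eqref{eq:index.upper}, without spelling out the details, and your argument is precisely the standard soft deduction one would expect. Your care with subsequences, the eigenvalue-counting-to-$\ell$th-eigenvalue conversion, and the normalization remark are all appropriate.
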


\begin{rema} \label{rema:index.lower.known}
	Theorem \ref{theo:index.lower}, Corollary \ref{coro:index.lower}, and their consequence in Theorem \ref{theo:higher.dim} (c), bound from below the index-plus-nullity of a smooth, multiplicity-one limiting $(V; \Omega)$ in terms of the indices-plus-nullities of $(u_i, \eps_i, \mfh_i)$. This generalizes what was known for $\mfh_i \equiv 0$ from \cite[Theorem 5.11]{ChodoshMantoulidis:multiplicity-one} (cf. \cite{CajuGaspar:symmetry}) to the setting of arbitrary $\mfh_i$.
\end{rema}

Given the sharp approximations in Section \ref{sec:mult.one}, the argument for Theorem \ref{theo:index.lower} can be modeled after \cite[Theorem 5.11]{ChodoshMantoulidis:multiplicity-one}. Aspects of the original proof in \cite{ChodoshMantoulidis:multiplicity-one} have been simplified. We also encounter certain other difficulties coming from the $\mfh$ term that we resolve. 

As before, we denote $\Gamma := \{ u = 0 \}$; it is a smooth closed hypersurface, and converges to $\Sigma$ in $C^{2,\alpha}$ as $\eps \to 0$ by \eqref{eq:mult.one.h.phi.estimates}, \eqref{eq:mult.one.mean.curv.estimates}. We introduce the notation:
\[ \cQ_{u,\eps}(\zeta, \xi) = \int_{M} \big[ \eps \nabla \zeta \cdot \nabla \xi + \eps^{-1} W''(u) \zeta \xi \big] \, d\mu_g, \; \zeta, \xi \in C^\infty(M), \]
\[ \cQ_\Gamma(\zeta, \xi) = \int_\Gamma \big[ (\nabla_\Gamma \zeta \cdot \nabla_\Gamma \xi - (|\sff_\Gamma|^2 + \ricc(\bm{n}_\Gamma, \bm{n}_\Gamma) + 2 \energyunit^{-1} (\partial_z \mfh)(\cdot, 0)) \zeta \xi \big] \, d\mu_\Gamma, \; \zeta, \xi \in C^\infty(\Gamma). \]
These quadratic forms relate to the second variations of $E_{\eps,\mfh}[u]$ and $A_{2 \energyunit^{-1} \mfh}[\Gamma]$ in \eqref{eq:second.variation.ac} and \eqref{eq:second.variation.h.smooth.scalar}---though, keep in mind that $\Gamma$ is not a critical point of $A_\mfh[\cdot])$, so $\cQ_\Gamma$ isn't its second variation. It will be convenient to work locally within a fixed $\eta > 0$ tubular neighborhood $\cU \subset M$ of $\Gamma$, and further denote:
\[ \cQ_{u,\eps}^\cU(\zeta, \xi) =  \int_{\cU} \big[ \eps \nabla \zeta \cdot \nabla \xi + \eps^{-1} W''(u) \zeta \xi \big] \, d\mu_g, \; \zeta, \xi \in C^\infty(\cU). \]

Consider an arbitrary $w \in C^\infty(\cU)$. Working in Fermi coordinates $(y, z) \in \Gamma \times (-\eta, \eta) = \cU$ over $\Gamma$, we may decompose $w$ as:
\begin{equation} \label{eq:index.lower.w.decomposition}
	w(y, z) =: w^\parallel(y) (\overline \mbh_\eps'(y,z) + \eps \mfh \overline \mbi_\eps'(y,z)) + w^\perp(y, z),
\end{equation}
where
\begin{equation} \label{eq:index.lower.wperp}
	\int_{-\eta}^\eta w^\perp(y, z) (\overline \mbh_\eps'(y, z) + \eps \mfh \overline \mbi_\eps'(y,z)) \, dz = 0 \text{ for every } y \in \Gamma.
\end{equation}
As in \cite[Section 5]{ChodoshMantoulidis:multiplicity-one}, one has:
\begin{equation} \label{eq:index.lower.w.l2}
	\int_\cU w^2 \, d\mu_g = \eps(\energyunit + o(1)) \int_\Gamma (w^\parallel)^2 \, d\mu_\Gamma + (1+o(1)) \int_\cU (w^\perp)^2 \, d\mu_g.
\end{equation}
We will need the following important lemmas regarding the behavior of $\cQ_{u,\eps}^\cU$ with respect to this decomposition. Their proofs are given at the end of the section.
\begin{lemm} \label{lemm:index.lower.wpar}
	For $\eps > 0$ sufficiently small, and all $f$, $g \in C^\infty(\Gamma)$:
	\[ \cQ_{u,\eps}^\cU(f (\overline \mbh_\eps' + \eps \mfh \overline \mbi_\eps'), g (\overline \mbh_\eps' + \eps \mfh \overline \mbi_\eps')) = \eps^2 \energyunit \cQ_\Gamma(f, g) + o(\eps^2) \int_\Gamma \big[ |\nabla_\Gamma f|^2 + |\nabla_\Gamma g|^2 + f^2 + g^2 \big] \, d\mu_\Gamma. \]
\end{lemm}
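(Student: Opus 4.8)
## Proof proposal for Lemma \ref{lemm:index.lower.wpar}

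\textbf{Strategy.} The plan is to compute $\cQ_{u,\eps}^\cU(f(\overline\mbh_\eps' + \eps\mfh\overline\mbi_\eps'), g(\overline\mbh_\eps' + \eps\mfh\overline\mbi_\eps'))$ directly in Fermi coordinates $(y,z)$, integrating out the $z$-variable at each fixed $y \in \Gamma$ and keeping track of all contributions down to order $\eps^2$. The key structural input is that $\overline\mbh_\eps'$ (corrected by $\eps\mfh\overline\mbi_\eps'$) is an approximate kernel element of the $\eps$-rescaled stability operator $\eps^2\Delta - W''(\overline\mbh_\eps)$ up to the precision established in Section \ref{sec:mult.one}; in particular the refined mean-curvature estimate \eqref{eq:mult.one.mean.curv.estimates.more.refined} controls the $H_\Gamma - \Delta_\Gamma h - 2\energyunit^{-1}\mfh(\cdot,0)$ term to order $O(\eps^2)$, which is exactly what is needed so that the ``bad'' $\overline\mbh_\eps'$-direction contributions do not spoil the $o(\eps^2)$ error. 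I would symmetrize and polarize at the end, so it suffices to treat $f = g$ and expand $\cQ_{u,\eps}^\cU(f(\overline\mbh_\eps' + \eps\mfh\overline\mbi_\eps'), f(\overline\mbh_\eps' + \eps\mfh\overline\mbi_\eps'))$.

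\textbf{Main steps.} First, expand $\nabla(f \cdot (\overline\mbh_\eps' + \eps\mfh\overline\mbi_\eps'))$ into its $z$-derivative piece (which produces $\eps^{-1}f(\overline\mbh_\eps'' + \eps\mfh\overline\mbi_\eps'' + \ldots)$, the dominant term) and its $\Gamma$-tangential piece (which produces $(\nabla_\Gamma f)(\overline\mbh_\eps' + \ldots)$ plus lower-order terms from $\nabla_\Gamma h$, controlled by \eqref{eq:mult.one.h.phi.horizontal.estimates}). Squaring and multiplying by $\eps$, the leading term $\eps \cdot \eps^{-2} f^2 (\overline\mbh_\eps'')^2$ combines with the potential term $\eps^{-1}W''(u)f^2(\overline\mbh_\eps')^2$; here one uses the ODE \eqref{eq:heteroclinic}, i.e. $\mbh'' = W'(\mbh)$ hence $\mbh''' = W''(\mbh)\mbh'$, so that $(\mbh'')^2$ and $W''(\mbh)(\mbh')^2$ differ by a total $z$-derivative ($\tfrac{d}{dz}(\mbh'\mbh'') = (\mbh'')^2 + W''(\mbh)(\mbh')^2$... careful: actually $\tfrac{d}{dz}(\mbh'\mbh'') = (\mbh'')^2 + \mbh'\mbh''' = (\mbh'')^2 + W''(\mbh)(\mbh')^2$, wrong sign — one instead needs to track that the stability operator applied to $\overline\mbh_\eps'$ is $O(\eps)$, not zero). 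The correct bookkeeping: substitute $W''(u) = W''(\overline\mbh_\eps) + O(\phi) = W''(\overline\mbh_\eps) + O_{C^0}(\eps)$ using \eqref{eq:mult.one.h.phi.estimates}, then integrate by parts in $z$ to convert $\int \eps^{-1}[(\partial_z(f\overline\mbh_\eps'))^2 + W''(\overline\mbh_\eps)(f\overline\mbh_\eps')^2]\,dz$ into $\int \eps^{-1} f^2 \overline\mbh_\eps' \cdot [-\partial_z^2 + W''(\overline\mbh_\eps)]\overline\mbh_\eps'\,dz$ plus boundary terms (which vanish by the cutoff) — and the bracket acting on $\overline\mbh_\eps'$ is $O(\eps^3)$ by \eqref{eq:mbh.cutoff.estimates} after rescaling. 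The surviving $\eps^2$-order contributions are: (i) $\eps^2\energyunit|\nabla_\Gamma f|^2$ from the tangential gradient (using $\int_\RR (\mbh')^2 = \energyunit$ and the change of variables $t = \eps^{-1}(z-h(y))$, which contributes $\eps$, against the $\eps$ prefactor... so actually $\eps \cdot \eps = \eps^2$, correct); (ii) the curvature/potential term $-\eps^2\energyunit(|\sff_\Gamma|^2 + \ricc(\bm n_\Gamma,\bm n_\Gamma) + 2\energyunit^{-1}(\partial_z\mfh)(\cdot,0))f^2$, which arises from Taylor-expanding $W''(u)$, the metric $g_z$, and the Fermi-coordinate Laplacian $\Delta = \Delta_{g_z} + H_z\partial_z + \partial_z^2$ to second order in $z$ (contributing the $z\overline\mbh_\eps'$ and $z^2$ terms), and crucially from the cross-terms with $\eps\mfh\overline\mbi_\eps'$, where the defining ODE \eqref{eq:mult.one.i.ode} for $\mbi$ is used to cancel the $\eps\mfh$-induced term $1 - 2\energyunit^{-1}\mbh'$; (iii) everything else is $O(\eps^3)$ or is $o(\eps^2)\int_\Gamma(|\nabla_\Gamma f|^2 + f^2)$, absorbed into the error. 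Finally, recognize the sum of (i) and (ii) as exactly $\eps^2\energyunit\cQ_\Gamma(f,f)$ by the definition of $\cQ_\Gamma$.

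\textbf{Main obstacle.} The hard part will be organizing the $\eps\mfh\overline\mbi_\eps'$ cross-terms so that the $O(\eps)$-order ``error'' they naively generate — coming from $\eps \cdot \eps^{-1} f^2 \overline\mbh_\eps'' \cdot \eps\mfh\overline\mbi_\eps''$ type products and from $W''(u)f^2 \overline\mbh_\eps' \cdot \eps\mfh\overline\mbi_\eps'$ — actually collapses to $O(\eps^2)$ after using the $\mbi$-ODE and integrating by parts. Concretely one must verify that $\int_\RR \overline\mbh_\eps'[-\partial_t^2 + W''(\mbh)]\overline\mbi_\eps'\,dt$ and the reciprocal pairing produce precisely the $2\energyunit^{-1}(\partial_z\mfh)(\cdot,0)$ coefficient after differentiating the $\mbi$-equation and using $\int_\RR \mbh'\cdot(1 - 2\energyunit^{-1}\mbh') = \int \mbh' - 2\energyunit^{-1}\int(\mbh')^2 = 2 - 2\energyunit^{-1}\energyunit = 0$ — so the leading cross-term vanishes by this orthogonality, and what remains is genuinely second order. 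I would also need to be careful that the ``$\mfh$'' appearing inside $\eps\mfh\overline\mbi_\eps'$ is evaluated consistently (at $(\cdot,0)$ versus at $(y,z)$), Taylor-expanding the discrepancy and absorbing it; and that replacing $\Gamma$ by its $C^{2,\alpha}$ limit $\Sigma$ in geometric quantities costs only $o(1)$, which is where the $C^{2,\alpha}$ convergence of $\Gamma \to \Sigma$ (from \eqref{eq:mult.one.h.phi.estimates}, \eqref{eq:mult.one.mean.curv.estimates}) enters and justifies the $o(\eps^2)\int_\Gamma[\cdots]$ form of the error rather than a sharper one.
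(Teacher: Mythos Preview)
Your overall strategy is right---expand in Fermi coordinates, integrate out $z$, and track everything to order $\eps^2$---and you correctly identify the role of the $\mbi$-ODE and the refined mean curvature estimate. However, there is a genuine gap in the precision you plan to use for $W''(u)$. You propose to substitute $W''(u) = W''(\overline\mbh_\eps) + O(\phi) = W''(\overline\mbh_\eps) + O_{C^0}(\eps)$; but then the term $\eps^{-1}\int W'''(\overline\mbh_\eps)\,\phi\,(\overline\mbh_\eps')^2 f^2\,dz$ is only $O(\eps)\int_\Gamma f^2$, far too large. Even upgrading to $\hat\phi = \phi - \eps\mfh\overline\mbi_\eps = O(\eps^2)$ via \eqref{eq:mult.one.phihat.estimate} is not enough: the term $\eps^{-1}\int W'''(\overline\mbh_\eps)\,\hat\phi\,(\overline\mbh_\eps')^2 f^2\,dz$ is then $O(\eps^2)\int_\Gamma f^2$ with an \emph{unknown} coefficient, which is not $o(\eps^2)$. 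The paper's proof crucially uses the full $o(\eps^2)$ asymptotic expansion $u = \overline\mbh_\eps + \eps\mfh\overline\mbi_\eps + \eps^2(a_\mbj\mbj_\eps + a_\mbk\mbk_\eps + a_\mbl\mbl_\eps) + \tilde\phi$ with $\tilde\phi = o(\eps^2)$ from \eqref{eq:mult.one.phitilde.estimate}, together with the ODEs \eqref{eq:mult.one.j.ode}--\eqref{eq:mult.one.l.ode}, to compute these contributions explicitly (see \eqref{eq:index.lower.wpar.full.3.1}--\eqref{eq:index.lower.wpar.full.3.7}).

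A second point you have not anticipated: once all $\eps^2$-order terms are collected, there are many contributions proportional to $H_\Gamma^2 fg$ (arising from the volume-form expansion, the $H_z$ expansion, the $a_\mbk$, $a_\mbl$ profiles, the $W''''$ term in the Taylor expansion of $W''(u)$, and the $\mfh\overline\mbh_\eps''$ term), and their total coefficient must vanish for the result to match $\cQ_\Gamma$. The paper verifies this via a nontrivial algebraic identity, ultimately reducing to $\int_\RR[W'''(\mbh)\mbi^2\mbh']'\,dz = 0$. Your ``Main obstacle'' paragraph focuses only on the first-order $\mbi$ cross-terms and the $(\partial_z\mfh)$ coefficient, but the $H_\Gamma^2$ cancellation is where most of the bookkeeping lies and is the actual delicate part of the argument.
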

\begin{lemm} \label{lemm:index.lower.cross}
	For $\eps > 0$ sufficiently small, all $f \in C^\infty(\Gamma)$, and all $\omega \in C^\infty(\cU)$ satisfying \eqref{eq:index.lower.wperp}:
	\[ \cQ_{u,\eps}^\cU(f (\overline \mbh_\eps' + \eps \mfh \overline \mbi_\eps'), \omega) = o(\eps^2) \int_\Gamma \big[ |\nabla_\Gamma f|^2 + f^2 \big] \, d\mu_\Gamma + o(1) \int_\cU \big[ \eps |\nabla \omega|^2 + \eps^{-1} \omega^2 \big] \, d\mu_g. \]
\end{lemm}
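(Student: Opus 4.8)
\textbf{Proof strategy for Lemma~\ref{lemm:index.lower.cross}.} The plan is to put $\cQ_{u,\eps}^\cU$ into divergence form and exploit that the profile $\zeta_0 := \overline\mbh_\eps' + \eps\mfh\overline\mbi_\eps'$ is, by design, an $O(\eps)$-approximate solution of the linearized equation $L_{u,\eps}\zeta := -\eps\Delta\zeta + \eps^{-1}W''(u)\zeta = 0$. Since $\overline\mbh_\eps'$, $\overline\mbi_\eps'$ are supported where $z \in \mathbf{I}_\eps = (-3\eps^{1-\delta_*},3\eps^{1-\delta_*}) \Subset (-\eta,\eta)$, the function $f\zeta_0$ vanishes near $\partial\cU$, so integration by parts gives $\cQ_{u,\eps}^\cU(f\zeta_0,\omega) = \int_\cU (L_{u,\eps}(f\zeta_0))\,\omega\,d\mu_g$ with no boundary term; since $f$ depends only on the interface coordinate $y$, expanding the commutator yields $L_{u,\eps}(f\zeta_0) = f\,L_{u,\eps}\zeta_0 - \eps(\Delta_{g_z}f)\zeta_0 - 2\eps\,\nabla_{g_z}f\cdot\nabla_{g_z}\zeta_0$.

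The crux is the identity $L_{u,\eps}\zeta_0 = O(\eps)\cdot(\text{profiles decaying exponentially in }\eps^{-1}z)$, \emph{with no $O(1)$ part}. A direct Fermi-coordinate computation, using $\Delta = \Delta_{g_z} + H_z\partial_z + \partial_z^2$, the ODEs~\eqref{eq:heteroclinic}, \eqref{eq:mult.one.i.ode}, the cutoff bounds~\eqref{eq:mbh.cutoff.estimates}, \eqref{eq:mbi.cutoff.estimates}, the second fundamental form bounds~\eqref{eq:mult.one.sff.bounds}, the decay~\eqref{eq:mbh.times.polynomial}, \eqref{eq:mbi.times.polynomial}, and the Section~\ref{sec:mult.one} estimates $\hat\phi = O_{C^{2,\alpha}_\eps}(\eps^2)$ of~\eqref{eq:mult.one.phihat.estimate} and $\Vert h\Vert_{C^{2,\alpha}_\eps}, \Vert \nabla_\Gamma h\Vert_{C^{2,\alpha}_\eps} = O(\eps^2)$ of~\eqref{eq:mult.one.h.phi.estimates}--\eqref{eq:mult.one.h.phi.horizontal.estimates}, shows $L_{u,\eps}\overline\mbh_\eps' = -H_z\overline\mbh_\eps'' + \mfh\,W'''(\overline\mbh_\eps)\,\overline\mbi_\eps\,\overline\mbh_\eps' + O(\eps)(\text{decay})$ (the $W'''$-term coming from $W''(u)-W''(\overline\mbh_\eps) = \eps\mfh W'''(\overline\mbh_\eps)\overline\mbi_\eps + O(\eps^2)$), while differentiating the $\mbi$-ODE gives $L_{u,\eps}(\eps\mfh\overline\mbi_\eps') = 2\energyunit^{-1}\mfh\,W'(\overline\mbh_\eps) - \mfh\,W'''(\overline\mbh_\eps)\,\overline\mbi_\eps\,\overline\mbh_\eps' + O(\eps)(\text{decay})$. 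The $W'''$-terms cancel; using $W'(\overline\mbh_\eps) = \overline\mbh_\eps'' + O(\eps^3)$ and the mean-curvature asymptotics~\eqref{eq:mult.one.mean.curv.estimates.more.refined} together with $H_z - H_\Gamma = O(|z|)$, $\mfh(\cdot,z)-\mfh(\cdot,0) = O(|z|)$, and $\Delta_\Gamma h = O(\eps)$, the surviving term $-(H_z - 2\energyunit^{-1}\mfh)\overline\mbh_\eps''$ is $O(\eps)(\text{decay})$ as well. This step is where the correction $\eps\mfh\overline\mbi_\eps'$ in the decomposition~\eqref{eq:index.lower.w.decomposition} earns its keep.

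Given this, one bounds $\int_\cU(L_{u,\eps}(f\zeta_0))\omega$ term by term. Every term other than $\eps\int_\cU \zeta_0\,(\nabla_\Gamma f\cdot\nabla_\Gamma\omega)\,d\mu_g$ has a coefficient of the form $O(\eps)\cdot(\text{exponentially decaying in }\eps^{-1}z)$ multiplied into $f$, $\nabla_\Gamma f$, or --- after one integration by parts in $y$ along the level sets, to dispose of the $\Delta_\Gamma f$ and $(\Delta_{g_z}-\Delta_\Gamma)f$ contributions --- into $\nabla_\Gamma f$ and $\nabla_\Gamma\omega$. Using $\int_\RR |t|^k|\mbh^{(\ell)}(t)|\,dt<\infty$ (so $\int_z |z|^k(\text{decay})\,dz = O(\eps^{k+1})$ and $|z|^k(\text{decay}) = O(\eps^k)(\text{decay})$), together with the orthogonality~\eqref{eq:index.lower.wperp} and the Fermi expansion $d\mu_g = (1+O(|z|))\,d\mu_\Gamma\,dz$ --- whereby any factor $a(y)\,\zeta_0(y,z)$ paired with $\omega$ leaves only the Jacobian mismatch $\int_z z\,\zeta_0\omega\,dz = O(\eps^{3/2})(\int_z \omega^2\,dz)^{1/2}$ --- Cauchy--Schwarz in $z$ and then in $y$ bounds each such contribution by $O(\eps^2)\,\Vert(f,\nabla_\Gamma f)\Vert_{L^2(\Gamma)}\,\Vert\omega\Vert_\cU$, where $\Vert\omega\Vert_\cU^2 := \int_\cU(\eps|\nabla\omega|^2 + \eps^{-1}\omega^2)\,d\mu_g$. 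For the remaining, borderline term $\eps\int_\cU\zeta_0(\nabla_\Gamma f\cdot\nabla_\Gamma\omega)$, naive Cauchy--Schwarz gives only $O(\eps)\Vert\nabla_\Gamma f\Vert_{L^2(\Gamma)}\Vert\omega\Vert_\cU$; to gain the missing power one differentiates~\eqref{eq:index.lower.wperp} in $y$, obtaining $\int_z \zeta_0\,\nabla_\Gamma\omega\,dz = -\int_z \omega\,\nabla_\Gamma\zeta_0\,dz$, and then uses $\nabla_\Gamma\zeta_0 = O(\eps)(\text{decay})$ --- this is exactly where $\Vert\nabla_\Gamma h\Vert_{C^{2,\alpha}_\eps} = O(\eps^2)$ enters --- to improve the bound to $O(\eps^2)\Vert\nabla_\Gamma f\Vert_{L^2(\Gamma)}\Vert\omega\Vert_\cU$. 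Finally, a Young inequality with weight $\sim\eps^{-1}$ converts every $O(\eps^2)\,\Vert(f,\nabla_\Gamma f)\Vert_{L^2(\Gamma)}\Vert\omega\Vert_\cU$ into $o(\eps^2)\int_\Gamma(|\nabla_\Gamma f|^2 + f^2)\,d\mu_\Gamma + o(1)\int_\cU(\eps|\nabla\omega|^2 + \eps^{-1}\omega^2)\,d\mu_g$, which is the assertion.

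The main obstacle --- and the point where the full strength of Section~\ref{sec:mult.one} is needed --- is the claim $L_{u,\eps}\zeta_0 = O(\eps)(\text{decay})$: because $\Gamma = \{u=0\}$ limits onto a hypersurface of mean curvature $2\energyunit^{-1}\mfh\not\equiv 0$ (rather than a minimal one, as when $\mfh\equiv 0$), $L_{u,\eps}\overline\mbh_\eps'$ carries an $O(1)$ term $-H_\Gamma\overline\mbh_\eps''$, and only its cancellation against $L_{u,\eps}(\eps\mfh\overline\mbi_\eps')$ --- which hinges on the $o(\eps^2)$-sharp expansion of $u$ and in particular on the refined mean-curvature estimate~\eqref{eq:mult.one.mean.curv.estimates.more.refined} --- makes the whole scheme close. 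A secondary subtlety is that only first-order tangential derivatives of $f$ are available on the right-hand side, which forces the integration-by-parts bookkeeping and the $y$-differentiated orthogonality trick for the $\nabla_\Gamma f\cdot\nabla_\Gamma\omega$ term.
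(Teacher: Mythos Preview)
Your proposal is correct and follows essentially the same route as the paper's proof. Both arguments hinge on the same two ingredients: (i) the cancellation $L_{u,\eps}\zeta_0 = O(\eps)(\text{exponential decay in } \eps^{-1}z)$, which the paper verifies term by term in \eqref{eq:index.lower.cross.full.4} using the expansion $u = \overline\mbh_\eps + \eps\mfh\overline\mbi_\eps + \hat\phi$, the differentiated $\mbi$-ODE, and the mean-curvature bound \eqref{eq:mult.one.mean.curv.estimates.more.refined}; and (ii) the $y$-differentiated orthogonality \eqref{eq:index.lower.wperp} to gain the extra power of $\eps$ on the borderline $\nabla_\Gamma f\cdot\nabla_\Gamma\omega$ term, exactly as the paper does between \eqref{eq:index.lower.cross.full.1.intermediate} and \eqref{eq:index.lower.cross.full.1}. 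The only organizational difference is that you first integrate by parts fully to $\int_\cU L_{u,\eps}(f\zeta_0)\,\omega$ and then expand via Leibniz, whereas the paper keeps the tangential gradient term separate from the outset; the substance is the same.
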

\begin{lemm} \label{lemm:index.lower.wperp}
	There is a constant $\gamma_1 > 0$ so that for $\eps > 0$ sufficiently small and all $\omega \in C^\infty(\cU)$ satisfying \eqref{eq:index.lower.wperp}:
	\[ \cQ_{u,\eps}^\cU(\omega, \omega) \geq \gamma_1 \int_\cU \big[ \eps |\nabla \omega|^2 + \eps^{-1} \omega^2 \big] \, d\mu_g. \]
\end{lemm}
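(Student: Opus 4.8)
To prove Lemma~\ref{lemm:index.lower.wperp}, the plan is to bound $\cQ_{u,\eps}^\cU$ from below slice by slice in the Fermi coordinates $(y,z) \in \Gamma \times (-\eta, \eta) = \cU$, reducing each $z$-slice integral to the spectral gap of the one-dimensional linearized operator $\cL_0 := \tfrac{d^2}{dt^2} - W''(\mbh)$, and then to verify that every discrepancy between $\cQ_{u,\eps}^\cU$ and this model is lower order. There are three discrepancies to track: (i) the orthogonality \eqref{eq:index.lower.wperp} is imposed against the \emph{perturbed} kernel $\overline \mbh_\eps' + \eps \mfh \overline \mbi_\eps'$ rather than against $\mbh_\eps'$; (ii) the potential appearing in $\cQ_{u,\eps}^\cU$ is $W''(u)$ rather than $W''(\mbh_\eps)$; and (iii) the ambient metric inside the tube $\cU$ is only $C^5$-close to the flat product $dz^2 + g_\Gamma$. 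All estimates from Section~\ref{sec:mult.one} are used uniformly along the compact hypersurface $\Gamma$.

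First I would record the one-dimensional input. Differentiating \eqref{eq:heteroclinic} gives $\cL_0 \mbh' = 0$, and $\mbh' > 0$, so $0$ is the bottom --- hence simple --- eigenvalue of $-\cL_0$ on $L^2(\RR)$; since $W''(\mbh(t)) \to W''(\pm 1) > 0$ exponentially, $-\cL_0$ has essential spectrum $[W''(\pm 1), \infty)$ and therefore a spectral gap above $0$. A routine convex-combination argument, using in addition the crude bound $W''(\mbh) \ge -\Vert W'' \Vert_{C^0}$, upgrades this to the $H^1$-coercivity estimate
\[ \int_\RR \big( (\psi')^2 + W''(\mbh)\, \psi^2 \big) \, dt \ge \gamma_0' \int_\RR \big( (\psi')^2 + \psi^2 \big) \, dt \]
for some $\gamma_0' > 0$ and every $\psi \in H^1(\RR)$ with $\int_\RR \psi \mbh' \, dt = 0$. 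The same estimate, with constant $\gamma_0' - o(1)$ as $\eps \to 0$ and with orthogonality imposed against $\mbh'|_{I_y}$, holds on the long intervals $I_y$ (of length $\asymp 2\eta/\eps$) that appear after the rescaling below; this is elementary eigenvalue perturbation, using that on the cut-off region $|t| \gtrsim \eps^{-\delta_*}$ the relevant potential is $\ge W''(\pm 1) - o(1) > 0$ by \eqref{eq:mbh.cutoff}.

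Next I would fix $y \in \Gamma$ and set $\psi_y(t) := \omega(y, h(y) + \eps t)$. In Fermi coordinates $|\nabla \omega|^2 = (\partial_z \omega)^2 + |\nabla_{g_z} \omega|_{g_z}^2$ and $d\mu_g = \sqrt{\det g_z}\, dy\, dz$ with $\sqrt{\det g_z} = 1 + O(\eta_0) + O(|z|)$; keeping the nonnegative term $\eps |\nabla_{g_z}\omega|_{g_z}^2$ aside for later and absorbing $\sqrt{\det g_z}$ into a harmless multiplicative constant $\ge \tfrac12$ (legitimate because $|W''(u)| \le C$ and $\eta_0$, $\eta$ may be taken small), it suffices to bound each slice integral $\int_{-\eta}^\eta \big( \eps (\partial_z \omega)^2 + \eps^{-1} W''(u)\, \omega^2 \big) \, dz$, which the substitution $z = h(y) + \eps t$ turns exactly into $\int_{I_y} \big( (\psi_y')^2 + W''(u(y, h(y)+\eps t))\, \psi_y^2 \big) \, dt$. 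By \eqref{eq:mult.one.h.phi.estimates} and \eqref{eq:mbh.cutoff}, $u(y, h(y)+\eps t) = \mbh(t) + O(\eps)$ in $C^0$, so the potential can be replaced by $W''(\mbh(t))$ at a cost $O(\eps) \int_{I_y} \psi_y^2 \, dt$. Rescaling \eqref{eq:index.lower.wperp}, and using that $\overline \mbh' - \mbh'$ is exponentially small in $L^2$ while $\eps \mfh \overline \mbi'$ has $L^2(dt)$-norm $O(\eps)$ (from \eqref{eq:mult.one.i.ode}, \eqref{eq:mbi.cutoff}), gives $|\int_{I_y} \psi_y \mbh' \, dt| = O(\eps)\, \Vert \psi_y \Vert_{L^2(I_y)}$ uniformly in $y$; subtracting the $\mbh'$-component of $\psi_y$ and applying the one-dimensional coercivity to the genuinely orthogonal remainder --- the cross term vanishing since $\cL_0 \mbh' = 0$, up to exponentially small boundary contributions --- yields $\int_{I_y} \big( (\psi_y')^2 + W''(\mbh)\, \psi_y^2 \big) \, dt \ge (\gamma_0' - o(1))\, \Vert \psi_y \Vert_{H^1(I_y)}^2$. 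Undoing the rescaling converts $\Vert \psi_y \Vert_{H^1(I_y)}^2$ into $\eps \int_{-\eta}^\eta (\partial_z \omega)^2 \, dz + \eps^{-1} \int_{-\eta}^\eta \omega^2 \, dz$; integrating over $y \in \Gamma$ and adding in the set-aside tangential term gives $\cQ_{u,\eps}^\cU(\omega, \omega) \ge \gamma_1 \int_\cU \big( \eps |\nabla \omega|^2 + \eps^{-1} \omega^2 \big) \, d\mu_g$ with, say, $\gamma_1 := \tfrac18 \min\{1, \gamma_0'\}$ once $\eps$ is small.

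The main obstacle is the error bookkeeping, i.e.\ getting the orders right. One must check that the failure of $\psi_y$ to be exactly $L^2$-orthogonal to $\mbh'$ is genuinely $o(\Vert \psi_y \Vert_{L^2})$ --- which is the case because that deficit is $O(\eps)$, driven entirely by the $\eps \mfh \overline \mbi_\eps'$ term, the cut-off error being exponentially small --- and that every $o(1)$ above is uniform in $y \in \Gamma$, which rests on the uniform hypotheses \eqref{eq:mult.one.assumptions.0}--\eqref{eq:mult.one.assumptions.iii} together with the bounds $\Vert h \Vert_{C^{2,\alpha}_\eps} = O(\eps^2)$ and $\Vert \phi \Vert_{C^{2,\alpha}_\eps} = O(\eps)$ from \eqref{eq:mult.one.h.phi.estimates}. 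The bounded-interval boundary terms and the $z$-dependence of the metric weight are routine, being controlled by the positivity $W''(u) \approx W''(\pm 1) > 0$ near the ends of the tube and by shrinking $\eta$; modulo this, the argument follows \cite{ChodoshMantoulidis:multiplicity-one, WangWei} closely.
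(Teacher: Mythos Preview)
Your argument is correct and follows essentially the same approach as the paper, which simply cites \cite[Lemma 5.8]{ChodoshMantoulidis:multiplicity-one} and notes that the result is ``a consequence of the strict stability of $-\tfrac{d^2}{dt^2} + W''(\mbh)$ once we work orthogonally to its kernel using \eqref{eq:index.lower.wperp}.'' You have written out the details that the paper leaves to the reference: the slice-by-slice reduction to the one-dimensional spectral gap, together with the verification that the three discrepancies (perturbed orthogonality direction, potential $W''(u)$ versus $W''(\mbh)$, and metric weight $\sqrt{\det g_z}$) are all $o(1)$ or $O(\eta)$ perturbations that can be absorbed.
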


These lemmas have a few straightforward but important implications. Together with \eqref{eq:index.lower.w.l2}, they show that for all $\eps > 0$ sufficiently small and all $w \in C^\infty(\cU)$,
\begin{equation} \label{eq:index.lower.Q.general.lower}
	\cQ_{u,\eps}^\cU(w, w) \geq - \eps \gamma_2 \int_{\cU} w^2 \, d\mu_g
\end{equation}
for some fixed $\gamma_2 > 0$. (See \cite[Lemma 5.10]{ChodoshMantoulidis:multiplicity-one}.) Using also that $W''(u) \geq \kappa > 0$ on $M \setminus \cU$ for $\eps > 0$ small, we note that \eqref{eq:index.lower.Q.general.lower} implies that
\[ \cQ_{u,\eps}(w, w) \geq \cQ_{u,\eps}^{\cU}(w, w) + \int_{M \setminus \cU} \eps^{-1} W''(u) w^2 \, d\mu_g \geq - \eps \gamma_2 \int_{\cU} w^2 \, d\mu_g + \eps^{-1} \kappa \int_{M \setminus \cU} w^2 \, d\mu_g \]
and thus we get the strong $L^2$ localization estimate
\begin{equation} \label{eq:index.lower.localization}
	\int_{M \setminus \cU} w^2 \, d\mu_g \leq C(\Lambda) \eps^2 \int_\cU w^2 \, d\mu_g, \; \text{ provided }   \cQ_{u,\eps}(w, w) \leq \eps \Lambda \int_M \omega^2 \, d\mu_g.
\end{equation}
Let us now show how Theorem \ref{theo:index.lower} follows from these facts.

\begin{proof}[Proof of Theorem \ref{theo:index.lower}]
	Denote
	\[ I_\Sigma := \# \{ \text{eigenvalues } \lambda \leq \lambda_0 \text{ (with multiplicity) of } \delta^2 A_\mfh[V; \Omega] \}, \]
	\[ I_\eps := \# \{ \text{eigenvalues } \lambda \leq \eps \lambda_0 \text{ (with multiplicity) of } \delta^2 E_{\eps,\mfh}[u] \}. \]
	From the variational characterization of eigenvalues of $\delta^2 A_\mfh[V; \Omega]$, the discrete nature of the corresponding spectrum, and the $C^2$ convergence of $\Gamma = \{ u = 0 \}$ to $\Sigma$, there exist $\gamma_3 > 0$ and functions $f_{1}, \ldots, f_{I_\Sigma} : \Gamma \to \RR$ such that
	\begin{equation} \label{eq:index.lower.positive.gap.gamma}
		\cQ_\Gamma(f, f) \geq \lambda_0 \int_\Gamma f^2 \, d\mu_\Gamma + \gamma_3 \int_\Gamma \big[ |\nabla_\Gamma f|^2 + f^2 \big] \, d\mu_\Gamma
	\end{equation}
	for all $f \in C^1(\Gamma)$ satisfying $\langle f, f_i \rangle_{L^2(\Gamma)} = 0$ for every $i = 1, \ldots, I_{\Sigma}$. Consider the linear map $\cI_\Gamma : L^2(\Gamma) \to \RR^{I_\Sigma}$ given by
	\[ \cI_\Gamma(f) := (\langle f, f_1 \rangle_{L^2(\Gamma)}, \ldots, \langle f_, f_{I_\Sigma} \rangle_{L^2(\Gamma)}). \]
	Using \eqref{eq:index.lower.w.l2}, \eqref{eq:index.lower.positive.gap.gamma}, and Lemmas \ref{lemm:index.lower.wpar}, \ref{lemm:index.lower.cross}, \ref{lemm:index.lower.wperp}, we find that for all $w \in C^\infty(\cU)$ with $w^\parallel \in \ker \cI_\Gamma$,
	\begin{align} \label{eq:index.lower.positive.gap.u}
		& \cQ_{u,\eps}^\cU(w, w) \nonumber \\
		& \qquad = \cQ_{u,\eps}^\cU(w^\parallel (\overline \mbh_\eps' + \eps \mfh \overline \mbi_\eps'), w^\parallel (\overline \mbh_\eps' + \eps \mfh \overline \mbi_\eps')) + \cQ_{u,\eps}^\cU(w^\perp, w^\perp) + 2 \cQ_{u,\eps}^\cU(w^\parallel (\overline \mbh_\eps' + \eps \mfh \overline \mbi_\eps'), w^\perp) \nonumber \\
		& \qquad \geq \eps^2(\energyunit - o(1)) \cQ_\Gamma(w^\parallel, w^\parallel) + o(\eps^2) \int_\Gamma \big[ |\nabla_\Gamma w^\parallel|^2 + (w^\parallel)^2 \big] \, d\mu_\Gamma \nonumber \\
		& \qquad \qquad + \gamma_1 \int_\cU \big[ \eps |\nabla w^\perp|^2 + \eps^{-1} (w^\perp)^2 \big] \, d\mu_g \nonumber \\
		& \qquad \qquad + o(\eps^2) \int_\Gamma \big[ |\nabla_\Gamma w^\parallel|^2 + (w^\parallel)^2 \big] \, d\mu_\Gamma + o(1) \int_\cU \big[ \eps |\nabla w^\perp|^2 + \eps^{-1} (w^\perp)^2 \big] \, d\mu_g \nonumber \\
		& \qquad \geq \eps^2 \energyunit \cdot \lambda_0 \int_\Gamma (w^\parallel)^2 \, d\mu_\Gamma + \eps^2 \energyunit \cdot \tfrac12 \gamma_3 \int_\Gamma \big[ |\nabla_\Gamma w^\parallel|^2 + (w^\parallel)^2 \big] \, d\mu_\Gamma \nonumber \\
		& \qquad \qquad + \tfrac12 \gamma_1 \int_\cU \big[ \eps |\nabla w^\perp|^2 + \eps^{-1} (w^\perp)^2 \big] \, d\mu_g \nonumber \\
		& \qquad \geq \eps (\lambda_0 + \gamma_4) \int_\cU w^2 \, d\mu_g,
	\end{align}
	for some $\gamma_4 > 0$. 
	
	We now prove that $I_\Sigma \geq I_\eps$. Let $w_1, \ldots, w_{I_\eps} \in C^\infty(M)$ denote an $L^2(M)$-orthonormal set of eigenfunctions of $\cQ_{u,\eps}$ with eigenvalues $\leq \lambda_0 \eps$, and set:
	\[ W_\Gamma := \lspan \{ w_1^\parallel, \ldots, w_{I_\eps}^\parallel \} \subset C^\infty(\Gamma). \]
	If $I_\Sigma \geq I_\eps$ failed, there would exist $w \in \lspan\{ w_1, \ldots, w_{I_\eps} \} \setminus \{0\}$ with $w^\parallel \in \ker \cI_\Gamma$. By \eqref{eq:index.lower.positive.gap.u},
	\[ \eps \lambda_0 \int_M \omega^2 \, d\mu_g \geq \cQ_{u,\eps}(w, w) \geq \cQ_{u,\eps}^\cU(w, w) \geq \eps (\lambda_0 + \gamma_3) \int_\cU w^2 \, d\mu_g, \]
	which implies that $w \equiv 0$ on $M$ by \eqref{eq:index.lower.localization}, a contradiction.
\end{proof}

\begin{proof}[Proof of Lemma \ref{lemm:index.lower.wpar}]
	We have:
	\begin{align} \label{eq:index.lower.wpar.full}
		& \cQ_{u,\eps}^\cU(f (\overline \mbh_\eps' + \eps \mfh \overline \mbi_\eps'), g (\overline \mbh_\eps' + \eps \mfh \overline \mbi_\eps')) \nonumber \\
		& \qquad = \int_{-\eta}^\eta \int_{\Gamma} \Big[ -\eps f (\overline \mbh_\eps' + \eps \mfh \overline \mbi_\eps') \cdot (\Delta_{g_z} + H_z \partial_z + \partial_z^2)(g (\overline \mbh_\eps' + \eps \mfh \overline \mbi_\eps')) \nonumber \\
		& \qquad \qquad \qquad + \eps^{-1}  W''(u) fg (\overline \mbh_\eps' + \eps \mfh \overline \mbi_\eps)^2 \Big] \, d\mu_{g_z} \, dz \nonumber \\
		& \qquad = \int_{-\eta}^\eta \int_\Gamma \Big[ \eps \nabla_{g_z} (f (\overline \mbh_\eps' + \eps \mfh \overline \mbi_\eps')) \cdot \nabla_{g_z} (g (\overline \mbh_\eps' + \eps \mfh \overline \mbi_\eps')) \nonumber \\
		& \qquad \qquad \qquad - H_z fg (\overline \mbh_\eps'' + \eps \mfh \overline \mbi_\eps'' + \eps^2 (\partial_z \mfh) \overline \mbi_\eps') (\overline \mbh_\eps' + \eps \mfh \overline \mbi_\eps') \nonumber \\
		& \qquad \qquad \qquad + \eps^{-1} fg (W''(u) (\overline \mbh_\eps' + \eps \mfh \overline \mbi_\eps') - \overline \mbh_\eps''' - \eps \mfh \overline \mbi_\eps''' - 2 \eps^2 (\partial_z \mfh) \overline \mbi_\eps'' - \eps^3 (\partial_z^2 \mfh) \overline \mbi_\eps') \nonumber \\
		& \qquad \qquad \qquad \qquad \qquad \cdot (\overline \mbh_\eps' + \eps \mfh \overline \mbi_\eps') \Big] \, d\mu_{g_z} \, dz.
	\end{align}
	We compute/estimate the terms in \eqref{eq:index.lower.wpar.full} one by one. We will repeatedly use Cauchy--Schwarz, \eqref{eq:mbh.cutoff}, \eqref{eq:mbh.times.polynomial}, \eqref{eq:mult.one.h.phi.horizontal.estimates}, \eqref{eq:mbi.times.polynomial}, \eqref{eq:mbi.cutoff}, $d\mu_{g_z} = (1+ H_\Gamma z - \tfrac12 (|\sff_\Gamma|^2 + \ricc(\bm{n}_\Gamma, \bm{n}_\Gamma) z^2 + O(1)z^3)d\mu_\Gamma$, and $H_z = H_\Gamma - (|\sff_\Gamma|^2 + \ricc(\bm{n}_\Gamma, \bm{n}_\Gamma))z + O(z^2)$, which follows from the Riccati equation. We thus have:
	\begin{align} \label{eq:index.lower.wpar.full.1}
		& \int_{-\eta}^\eta \int_\Gamma \eps \nabla_{g_z} (f (\overline \mbh_\eps' + \eps \mfh \overline \mbi_\eps')) \cdot \nabla_{g_z} (g (\overline \mbh_\eps' + \eps \mfh \overline \mbi_\eps')) \, d\mu_{g_z} \, dz \nonumber \\
		& \qquad = \int_{-\eta}^\eta \int_\Gamma \eps \big( (\nabla_\Gamma f)(1 + O(1) z)(\overline \mbh_\eps' + \eps \mfh \overline \mbi_\eps') \nonumber \\
		& \qquad \qquad \qquad \qquad \qquad - \eps^{-1} f (\overline \mbh_\eps'' \nabla_{g_z} h + \eps \mfh \overline \mbi_\eps'' \nabla_{g_z} h - \eps^2 (\nabla_{g_z} \mfh) \overline \mbi_\eps') \big) \nonumber \\
		& \qquad \qquad \qquad \cdot \big( (\nabla_\Gamma g)(1 + O(1) z)(\overline \mbh_\eps' + \eps \mfh \overline \mbi_\eps') \nonumber \\
		& \qquad \qquad \qquad \qquad \qquad - \eps^{-1} g (\overline \mbh_\eps'' \nabla_{g_z} h + \eps \mfh \overline \mbi_\eps'' \nabla_{g_z} h - \eps^2 (\nabla_{g_z} \mfh) \overline \mbi_\eps') \big)^2 \, (1 + O(1)z) d\mu_\Gamma \, dz \nonumber \\
		& \qquad = \eps^2 \energyunit \int_\Gamma \nabla_\Gamma f \cdot \nabla_\Gamma g \, d\mu_\Gamma + o(\eps^2) \int_\Gamma \big[ |\nabla_\Gamma f|^2 + |\nabla_\Gamma g|^2 + f^2 + g^2 \big] \, d\mu_\Gamma.
	\end{align}
	Next, using \eqref{eq:mult.one.h.phi.estimates}, and the  integral identities
	\[ \int_\RR \mbh'' \mbh' \, dz = 0, \text{ and } \int_\RR z \mbh'' \mbh' \, dz = - \tfrac12 \int_{\RR} (\mbh')^2 \, dz = - \tfrac12 \energyunit, \]
	and \eqref{eq:mult.one.h.phi.horizontal.estimates}, we have:
	\begin{align} \label{eq:index.lower.wpar.full.2.1}
		& - \int_{-\eta}^\eta \int_\Gamma H_z fg \overline \mbh_\eps'' \overline \mbh_\eps' \, d\mu_{g_z} \, dz \nonumber \\
		& \qquad = - \int_{-\eta}^\eta \int_\Gamma (H_\Gamma - (|\sff_\Gamma|^2 + \ricc(\bm{n}_\Gamma, \bm{n}_\Gamma))z + O(1) z^2) fg \overline \mbh_\eps'' \overline \mbh_\eps' \, (1 + H_\Gamma z + O(1) z^2) \, d\mu_\Gamma \, dz \nonumber \\
		& \qquad = - \int_\Gamma H_\Gamma fg  \Big[ \int_{-\eta}^\eta \overline \mbh_\eps'' \overline \mbh_\eps' \, dz \Big] \, d\mu_\Gamma + \int_\Gamma (|\sff_\Gamma|^2 + \ricc(\bm{n}_\Gamma, \bm{n}_\Gamma) - H_\Gamma^2) fg \Big[ \int_{-\eta}^\eta z \overline \mbh_\eps'' \overline \mbh_\eps' \, dz \Big] \, d\mu_\Gamma \nonumber \\
		& \qquad \qquad + o(\eps^2) \int_\Gamma |fg| \, d\mu_\Gamma \nonumber \\
		& \qquad = - \tfrac12 \energyunit \eps^2  \int_\Gamma (|\sff_\Gamma|^2 + \ricc(\bm{n}_\Gamma, \bm{n}_\Gamma) - H_\Gamma^2) fg \, d\mu_\Gamma + o(\eps^2) \int_\Gamma \big[ f^2 + g^2 \big] \, d\mu_\Gamma.
	\end{align}
	Next, using \eqref{eq:mult.one.h.phi.estimates}, 
	\eqref{eq:mult.one.mean.curv.estimates}, \eqref{eq:mult.one.h.phi.horizontal.estimates}:
	\begin{align} \label{eq:index.lower.wpar.full.2.2}
		& - \int_{-\eta}^\eta \int_\Gamma H_z fg \cdot \eps \mfh \overline \mbi_\eps'' \cdot \overline \mbh_\eps' \, d\mu_{g_z} \, dz \nonumber \\
		& \qquad = - \int_{-\eta}^\eta \int_\Gamma (H_\Gamma + O(1) z) fg \cdot \eps \mfh \overline \mbi_\eps'' \cdot \overline \mbh_\eps' \, (1 + O(1) z) d\mu_\Gamma \, dz \nonumber \\
		& \qquad = \tfrac12 \energyunit \eps^2 \Big[ \int_{\RR} \mbh'' \mbi' \, dz \Big] \int_\Gamma H_\Gamma^2 fg \, d\mu_\Gamma + o(\eps^2) \int_\Gamma \big[ f^2 + g^2 \big] \, d\mu_\Gamma,
	\end{align}
	and
	\begin{align} \label{eq:index.lower.wpar.full.2.3}
		& - \int_{-\eta}^\eta \int_\Gamma H_z fg \cdot \overline \mbh_\eps'' \cdot \eps \mfh \overline \mbi_\eps' \, d\mu_{g_z} \, dz \nonumber \\
		& \qquad = - \int_{-\eta}^\eta \int_\Gamma (H_\Gamma + O(1) z) fg \cdot \overline \mbh_\eps'' \cdot \eps \mfh \overline \mbi_\eps' \, (1 + O(1) z) d\mu_\Gamma \, dz \nonumber \\
		& \qquad = - \tfrac12 \energyunit \eps^2  \Big[ \int_{\RR} \mbh'' \mbi' \, dz \Big] \int_\Gamma H_\Gamma^2 fg \, d\mu_\Gamma + o(\eps^2) \int_\Gamma \big[ f^2 + g^2 \big] \, d\mu_\Gamma,
	\end{align}
	and
	\begin{align} \label{eq:index.lower.wpar.full.2.4}
		& - \int_{-\eta}^\eta \int_\Gamma H_z fg \cdot \big[ \eps \mfh \overline \mbi_\eps'' \cdot \eps \mfh \overline \mbi_\eps' + \eps^2 (\partial_z \mfh) \overline \mbi_\eps' \cdot \overline \mbh_\eps' + \eps^2 (\partial_z \mfh) \overline \mbi_\eps' \cdot \eps \mfh \overline \mbi_\eps' \big] \, d\mu_{g_z} \, dz \nonumber \\
		& \qquad = o(\eps^2) \int_\Gamma fg \, d\mu_\Gamma.
	\end{align}
	Next, using \eqref{eq:mult.one.h.phi.estimates}, \eqref{eq:mult.one.phihat.estimate},   \eqref{eq:mult.one.phitilde.estimate}, the fact that
	\[ \mbi''' - W''(\mbh) \mbi' = W'''(\mbh) \mbh' \mbi - 2 \energyunit^{-1} \mbh'', \]
	and the decomposition $u = \overline \mbh_\eps + \eps \mfh \overline \mbi_\eps + \eps^2 a_\mbj \mbj_\eps + \eps^2 a_\mbk \mbk_\eps + \eps^2 a_\mbl \mbl_\eps + \tilde \phi$, with
	\[ a_{\mbj} = |\sff_\Sigma|^2 + \ricc(\bm{n}_\Sigma, \bm{n}_\Sigma) + 2 \energyunit^{-1} (\partial_z \mfh)(\cdot, 0), \]
	\[ a_{\mbk} = - \mfh(\cdot, 0) H_\Sigma - 2 (\partial_z \mfh)(\cdot, 0), \; a_{\mbl} = \tfrac12 \mfh^2, \]
	we have:
	\begin{align} \label{eq:index.lower.wpar.full.3}
		& \int_{-\eta}^\eta \int_\Gamma \eps^{-1} fg \Big[ W''(u) (\overline \mbh_\eps' + \eps \mfh \overline \mbi_\eps') - \overline \mbh_\eps''' - \eps \mfh \overline \mbi_\eps''' - 2 \eps^2 (\partial_z \mfh) \overline \mbi_\eps'' - \eps^3 (\partial_z^2 \mfh) \overline \mbi_\eps') \Big] (\overline \mbh_\eps' + \eps \mfh \overline \mbi_\eps') \, d\mu_{g_z} \, dz \nonumber \\
		& \qquad = \int_{-\eta}^\eta \int_\Gamma \eps^{-1} fg \Big[ W''(u) (\overline \mbh_\eps' + \eps \mfh \overline \mbi_\eps') - W''(\overline \mbh_\eps)(\overline \mbh_\eps' + \eps \mfh \overline \mbi_\eps') - W'''(\overline \mbh_\eps) \overline \mbh_\eps' \cdot \eps \mfh \overline \mbi_\eps \nonumber \\
		& \qquad \qquad \qquad \qquad \qquad + 2 \eps \energyunit^{-1} \mfh \overline \mbh_\eps'' - 2 \eps^2 (\partial_z \mfh) \overline \mbi_\eps'') \Big] (\overline \mbh_\eps' + \eps \mfh \overline \mbi_\eps') \, d\mu_{g_z} \, dz \nonumber \\
		& \qquad \qquad + o(\eps^2) \int_\Gamma \big[ f^2 + g^2 \big] \, d\mu_\Gamma \nonumber \\
		& \qquad = \eps^{-1} \int_{-\eta}^\eta \int_\Gamma fg \Big[ W''(u) - W''(\overline \mbh_\eps) - W'''(\overline \mbh_\eps) \cdot \eps \mfh \overline \mbi_\eps \Big] (\overline \mbh_\eps' + \eps \mfh_\eps')^2 \, d\mu_{g_z} \, dz \nonumber \\
		& \qquad \qquad + \int_{-\eta}^\eta \int_\Gamma fg \Big[ W'''(\overline \mbh_\eps) \cdot \eps \mfh^2 \overline \mbi_\eps'  \overline \mbi_\eps + 2 \energyunit^{-1} \mfh \overline \mbh_\eps'' - 2 \eps (\partial_z \mfh) \overline \mbi_\eps'' \Big]  (\overline \mbh_\eps' + \eps \mfh \overline \mbi_\eps') \, d\mu_{g_z} \, dz \nonumber \\
		& \qquad \qquad + o(\eps^2) \int_\Gamma \big[ f^2 + g^2 \big] \, d\mu_\Gamma \nonumber \\
		& \qquad = \eps \int_{-\eta}^\eta \int_\Gamma fg W'''(\overline \mbh_\eps) (a_{\mbj} \mbj_\eps + a_{\mbk} \mbk_\eps + a_{\mbl} \mbl_\eps) (\overline \mbh_\eps' + \eps \mfh \overline \mbi_\eps')^2 \, d\mu_{g_z} \, dz \nonumber \\
		& \qquad \qquad + \tfrac12 \eps \int_{-\eta}^\eta \int_{\Gamma} fg W''''(\overline \mbh_\eps) \mfh^2 \overline \mbi_\eps^2 (\overline \mbh_\eps' + \eps \mfh \overline \mbi_\eps')^2 \, d\mu_{g_z} \, dz \nonumber \\
		& \qquad \qquad + \eps \int_{-\eta}^\eta \int_\Gamma fg W'''(\overline \mbh_\eps) \mfh^2 \overline \mbi_\eps'  \overline \mbi_\eps (\overline \mbh_\eps' + \eps \mfh \overline \mbi_\eps') \, d\mu_{g_z} \, dz \nonumber \\
		& \qquad \qquad + 2 \energyunit^{-1} \int_{-\eta}^\eta \int_\Gamma fg \mfh \overline \mbh_\eps'' (\overline \mbh_\eps' + \eps \mfh \overline \mbi_\eps') \, d\mu_{g_z} \, dz \nonumber \\
		& \qquad \qquad - 2 \eps \int_{-\eta}^\eta \int_\Gamma fg (\partial_z \mfh) \overline \mbi_\eps'' (\overline \mbh_\eps' + \eps \mfh \overline \mbi_\eps') \, d\mu_{g_z} \, dz \nonumber \\
		& \qquad \qquad + o(\eps^2) \int_\Gamma \big[ f^2 + g^2 \big] \, d\mu_\Gamma \nonumber \\
		& \qquad = \eps \int_{-\eta}^\eta \int_{\Gamma} fg W'''(\overline \mbh_\eps) (a_{\mbj} \mbj_\eps + a_{\mbk} \mbk_\eps + a_{\mbl} \mbl_\eps) (\overline \mbh_\eps')^2 \, d\mu_{g_z} \, dz \nonumber \\
		& \qquad \qquad + \tfrac12 \eps \int_{-\eta}^\eta \int_{\Gamma} fg W''''(\overline \mbh_\eps) \mfh(\cdot, 0)^2 \overline \mbi_\eps^2 (\overline \mbh_\eps')^2 \, d\mu_{g_z} \, dz \nonumber \\
		& \qquad \qquad + \eps \int_{-\eta}^\eta \int_{\Gamma} fg W'''(\overline \mbh_\eps) \mfh(\cdot, 0)^2 \overline \mbi_\eps' \overline \mbi_\eps \overline \mbh_\eps' \, d\mu_{g_z} \, dz \nonumber \\
		& \qquad \qquad + 2 \energyunit^{-1} \int_{-\eta}^\eta \int_{\Gamma} fg \mfh \overline \mbh_\eps'' (\overline \mbh_\eps' + \eps \mfh \overline \mbi_\eps') \, d\mu_{g_z} \, dz \nonumber \\
		& \qquad \qquad - 2 \eps \int_{-\eta}^\eta \int_{\Gamma} fg (\partial_z \mfh)(\cdot, 0) \overline \mbi_\eps'' \overline \mbh_\eps' \, d\mu_{g_z} \, dz \nonumber \\
		& \qquad \qquad + o(\eps^2) \int_{\Gamma} \big[ f^2 + g^2 \big] \, d\mu_\Gamma.
	\end{align}
	We estimate the terms of \eqref{eq:index.lower.wpar.full.3} individually, leaving the highest order term for last. We repeatedly use the fact that polynomially growing functions $u : \RR \to \RR$ satisfy
	\[ \int_{\RR} W'''(\mbh) \cdot u \cdot (\mbh')^2 \, dz = \int_{\RR} \mbh'' \cdot (u'' - W''(\mbh) u) \, dz, \]
	which is easily checked by integration by parts. Thus, we have
	\[ \int_\RR W'''(\mbh) \cdot \mbj \cdot (\mbh')^2 \, dz = - \tfrac12 \energyunit, \]
	which implies, together with $\Gamma \to \Sigma$ in $C^2$ and $d\mu_{g_z} = (1 + O(1)z) d\mu_\Gamma$, that
	\begin{align} \label{eq:index.lower.wpar.full.3.1}
		& \eps \int_{-\eta}^\eta \int_\Gamma fg W'''(\overline \mbh_\eps) a_{\mbj} \mbj_\eps (\overline \mbh_\eps')^2 \, d\mu_{g_z} \, dz \nonumber \\
		& \qquad = - \tfrac12 \energyunit \eps^2 \int_\Gamma (|\sff_\Gamma|^2 + \ricc(\bm{n}_\Gamma, \bm{n}_\Gamma) + 2 \energyunit^{-1} (\partial_z \mfh)(\cdot, 0)) fg \, d\mu_\Gamma + o(\eps^2) \int_\Gamma \big[ f^2 + g^2 \big] \, d\mu_\Gamma. 
	\end{align}
	Next, we have:
	\[ \int_{\RR} W'''(\mbh) \cdot \mbk \cdot (\mbh')^2 \, dz = \int_\RR \mbh'' \mbi', \]
	which implies, together with \eqref{eq:mult.one.mean.curv.estimates} and $d\mu_{g_z} = (1 + O(1)z) d\mu_\Gamma$, that
	\begin{align} \label{eq:index.lower.wpar.full.3.2}
		& \eps \int_{-\eta}^\eta \int_\Gamma fg W'''(\overline \mbh_\eps) a_{\mbk} \mbk_\eps  (\overline \mbh_\eps')^2 \, d\mu_{g_z} \, dz \nonumber \\
		& \qquad = - \tfrac12 \energyunit \eps^2 \Big[ \int_{\RR} \mbh'' \mbi' \, dz \Big] \int_{\Gamma} (H_\Gamma^2 + 4 \energyunit^{-1} (\partial_z \mfh)(\cdot, 0)) fg \, d\mu_\Gamma + o(\eps^2) \int_{\Gamma} \big[ f^2 + g^2 \big] \, d\mu_\Gamma.
	\end{align}
	Next, we have
	\[ \int_{\RR} W'''(\mbh) \cdot \mbl \cdot (\mbh')^2 \, dz = \int_{\RR} \mbh'' W'''(\mbh) \mbi^2 \, dz, \]
	which implies, together with \eqref{eq:mult.one.mean.curv.estimates}, $d\mu_{g_z} = (1 + O(1)z) d\mu_\Gamma$, and $\Gamma \to \Sigma$ in $C^2$, that
	\begin{align} \label{eq:index.lower.wpar.full.3.3}
		& \eps \int_{-\eta}^\eta \int_\Gamma fg W'''(\overline \mbh_\eps) a_{\mbl} \mbl_\eps (\overline \mbh_\eps')^2 \, d\mu_{g_z} \, dz \nonumber \\
		& \qquad = \tfrac18 \energyunit^2 \eps^2 \Big[ \int_{\RR} W'''(\mbh) \mbi^2 \mbh'' \, dz \Big] \int_{\Gamma} H_\Gamma^2 fg \, d\mu_\Gamma + o(\eps^2) \int_{\Gamma} \big[ f^2 + g^2 \big] \, d\mu_\Gamma.
	\end{align}
	Next, we have by \eqref{eq:mult.one.mean.curv.estimates} and $d\mu_{g_z} = (1 + O(1)z) d\mu_\Gamma$:
	\begin{align} \label{eq:index.lower.wpar.full.3.4}
		& \tfrac12 \eps \int_{-\eta}^\eta \int_\Gamma fg W''''(\overline \mbh_\eps) \mfh(\cdot, 0)^2 \mbi_\eps^2 (\overline \mbh_\eps')^2 \, d\mu_{g_z} \, dz \nonumber \\
		& \qquad = \tfrac18 \energyunit^2 \eps^2 \Big[ \int_{\RR} W''''(\mbh) \mbi^2 (\mbh')^2 \, dz \Big] \int_\Gamma H_\Gamma^2 fg \, d\mu_\Gamma + o(\eps^2) \int_\Gamma \big[ f^2 + g^2 \big] \, d\mu_\Gamma.
	\end{align}
	Next, we have by \eqref{eq:mult.one.mean.curv.estimates} and $d\mu_{g_z} = (1 + O(1)z) d\mu_\Gamma$:
	\begin{align} \label{eq:index.lower.wpar.full.3.5}
		& \eps \int_{-\eta}^\eta \int_{\Gamma} fg W'''(\overline \mbh_\eps) \mfh(\cdot, 0)^2 \overline \mbi_\eps' \overline \mbi_\eps \overline \mbh_\eps' \, d\mu_{g_z} \, dz	\nonumber \\
		& \qquad = \tfrac14 \energyunit^2 \eps^2 \Big[ \int_{\RR} W'''(\mbh) \mbi' \mbi \mbh' \, dz \Big] \int_{\Gamma} H_\Gamma^2 fg \, d\mu_\Gamma + o(\eps^2) \int_\Gamma \big[ f^2 + g^2 \big] \, d\mu_\Gamma.
	\end{align}
	Next, we have by $d\mu_{g_z} = (1 + O(1)z) d\mu_\Gamma$:
	\begin{align} \label{eq:index.lower.wpar.full.3.6}
		& - 2 \eps \int_{-\eta}^\eta \int_{\Gamma} fg (\partial_z \mfh)(\cdot, 0) \overline \mbi_\eps'' \overline \mbh_\eps' \, d\mu_{g_z} \, dz \nonumber \\
		& \qquad = 2 \eps^2 \Big[ \int_\RR \mbh'' \mbi' \, dz \Big] \int_{\Gamma} (\partial_z \mfh)(\cdot, 0) fg \, d\mu_\Gamma + o(\eps^2) \int_\Gamma \big[ f^2 + g^2 \big] \, d\mu_\Gamma.
	\end{align}
	Finally, using
	\[ \int_{\RR} \mbh'' \mbh' \, dz = 0, \; \int_{\RR} z \mbh'' \mbh' \, dz = - \tfrac12 \energyunit, \]
	and \eqref{eq:mult.one.mean.curv.estimates}, and $d\mu_{g_z} = (1 + H_\Gamma z + O(z^2)) d\mu_\Gamma$, we have
	\begin{align} \label{eq:index.lower.wpar.full.3.7}
		& 2 \energyunit^{-1} \int_{-\eta}^\eta \int_{\Gamma} fg \mfh \overline \mbh_\eps'' (\overline \mbh_\eps' + \eps \mfh \overline \mbi_\eps') \, d\mu_{g_z} \, dz \nonumber \\
		& \qquad = 2 \energyunit^{-1} \int_{-\eta}^\eta \int_\Gamma fg (\mfh(\cdot, 0) + (\partial_z \mfh)(\cdot, 0) z + O(1) z^2) \overline \mbh_\eps'' \overline \mbh_\eps' (1 + H_\Gamma z + O(1) z^2) \, d\mu_\Gamma \, dz \nonumber \\
		& \qquad \qquad + 2 \energyunit^{-1} \eps \int_{-\eta}^\eta \int_\Gamma fg \mfh(\cdot, 0)^2 \overline \mbh_\eps'' \overline \mbi_\eps' \, d\mu_{g_z} \, dz \nonumber \\
		& \qquad = - \eps^2 \int_\Gamma (\mfh(\cdot, 0) H_\Gamma + (\partial_z \mfh)(\cdot, 0)) fg \, d\mu_\Gamma \nonumber \\
		& \qquad \qquad + 2 \energyunit^{-1} \eps^2 \Big[ \int_{\RR} \mbh'' \mbi' \, dz \Big] \int_{\Gamma} fg \mfh(\cdot, 0)^2 \, d\mu_{\Gamma} + o(\eps^2) \int_{\Gamma} \big[ f^2 + g^2 \big] \,  d\mu_\Gamma \nonumber \\
		& \qquad = - \tfrac12 \energyunit \eps^2 \int_\Gamma (H_\Gamma^2 + 2 \energyunit^{-1} (\partial_z \mfh)(\cdot, 0)) fg \, d\mu_\Gamma  \nonumber \\
		& \qquad \qquad + \tfrac12 \energyunit \eps^2 \Big[ \int_{\RR} \mbh'' \mbi' \, dz \Big] \int_{\Gamma} H_\Gamma^2 fg \, d\mu_\Gamma + o(\eps^2) \int_{\Gamma} \big[ f^2 + g^2 \big] \, d\mu_\Gamma.
	\end{align}
	We now collect terms. Up to error terms, the integrands that show up are:
	\[ \nabla_\Gamma f \cdot \nabla_\Gamma g, \; (|\sff_\Gamma|^2 + \ricc(\bm{n}_\Gamma, \bm{n}_\Gamma)) fg, \; (\partial_z \mfh)(\cdot, 0)^2 fg,  \text{ and } H_\Gamma^2 fg. \]
	Among them, $\nabla_\Gamma f \cdot \nabla_\Gamma g$ only appears in \eqref{eq:index.lower.wpar.full.1} with a coefficient of $\energyunit$, contributing
	\[ \energyunit \eps^2 \int_\Gamma \nabla_\Gamma f \cdot \nabla_\Gamma g \, d\mu_\Gamma \]
	to \eqref{eq:index.lower.wpar.full}. Next, $(|\sff_\Gamma|^2 + \ricc(\bm{n}_\Gamma, \bm{n}_\Gamma))fg$ only appears in \eqref{eq:index.lower.wpar.full.2.1}, \eqref{eq:index.lower.wpar.full.3.1}, with a total coefficient of $-\tfrac12 \energyunit - \tfrac12 \energyunit = -\energyunit$, contributing 
	\[ - \energyunit \int_\Gamma (|\sff_\Gamma|^2 + \ricc(\bm{n}_\Gamma, \bm{n}_\Gamma))fg \, d\mu_\Gamma \]
	to \eqref{eq:index.lower.wpar.full}. Next, $(\partial_z \mfh)(\cdot, 0) fg$ only appears in \eqref{eq:index.lower.wpar.full.3.1}, \eqref{eq:index.lower.wpar.full.3.2}, \eqref{eq:index.lower.wpar.full.3.6}, \eqref{eq:index.lower.wpar.full.3.7}, with a total coefficient of $-1 - 2 \Big[ \int_{\RR} \mbh'' \mbi' \, dz \Big] + 2 \Big[ \int_{\RR} \mbh'' \mbi \, dz \Big] - 1 = -2$, contributing
	\[ - 2 \int_\Gamma (\partial_z \mfh)(\cdot, 0) fg \, d\mu_\Gamma \]
	to \eqref{eq:index.lower.wpar.full}. Finally, $H_\Gamma^2 fg$ only appears in \eqref{eq:index.lower.wpar.full.2.1}, \eqref{eq:index.lower.wpar.full.2.2}, \eqref{eq:index.lower.wpar.full.2.3}, \eqref{eq:index.lower.wpar.full.3.2}, \eqref{eq:index.lower.wpar.full.3.3}, \eqref{eq:index.lower.wpar.full.3.4}, \eqref{eq:index.lower.wpar.full.3.5}, \eqref{eq:index.lower.wpar.full.3.7}, with a total coefficient of
	\begin{align*}
		& \tfrac12 \energyunit + \tfrac12 \energyunit \Big[ \int_{\RR} \mbh'' \mbi' \, dz \Big] - \tfrac12 \energyunit \Big[ \int_{\RR} \mbh'' \mbi' \, dz \Big] - \tfrac12 \energyunit \Big[ \int_{\RR} \mbh'' \mbi' \, dz \Big] \\
		& \qquad + \tfrac18 \energyunit^2 \Big[ \int_{\RR} W'''(\mbh) \mbi^2 \mbh'' \, dz \Big] + \tfrac18 \energyunit^2 \Big[ \int_{\RR} W''''(\mbh) \mbi^2 (\mbh')^2 \, dz \Big] + \tfrac14 \energyunit^2 \int_{\RR} W'''(\mbh) \mbi' \mbi \mbh' \, dz \Big] \\
		& \qquad - \tfrac12 \energyunit + \tfrac12 \energyunit \Big[ \int_{\RR} \mbh'' \mbi' \, dz \Big] \\
		& = \tfrac18 \energyunit^2 \int_{\RR} \big[ W'''(\mbh) \mbi^2 \mbh' \big]' \, dz = 0,
	\end{align*}
	thus not contributing to \eqref{eq:index.lower.wpar.full}. The lemma follows.
\end{proof}

\begin{proof}[Proof of Lemma \ref{lemm:index.lower.cross}]
	We have:
	\begin{align} \label{eq:index.lower.cross.full}
		& \cQ_{u,\eps}^\cU(f(\overline \mbh_\eps' + \eps \mfh \overline \mbi_\eps'), \omega) \nonumber \\
		& \qquad = \int_{-\eta}^\eta \int_\Gamma \Big[ -\eps \omega \cdot (\Delta_{g_z} + H_z \partial_z + \partial_z^2)(f(\overline \mbh_\eps' + \eps \mfh \overline \mbi_\eps')) \nonumber \\
		& \qquad \qquad \qquad + \eps^{-1} W''(u) \omega f (\overline \mbh_\eps' + \eps \mfh \overline \mbi_\eps') \Big] \, d\mu_{g_z} \, dz \nonumber \\
		& \qquad = \int_{-\eta}^\eta \int_\Gamma \Big[ \eps \nabla_{g_z} \omega \cdot \nabla_{g_z} (f(\overline \mbh_\eps' + \eps \mfh \overline \mbi_\eps')) - H_z \omega f (\overline \mbh_\eps'' + \eps \mfh \overline \mbi_\eps'' + \eps (\partial_z \mfh) \overline \mbi_\eps') \nonumber \\
		& \qquad \qquad \qquad + \eps^{-1} \omega f \big( W''(u)(\overline \mbh_\eps' + \eps \mfh \overline \mbi_\eps') - \overline \mbh_\eps''' - \eps \mfh \overline \mbi_\eps''' - 2 \eps^2 (\partial_z \mfh) \overline \mbi_\eps'' - \eps^3 (\partial_z^2 \mfh) \overline \mbi_\eps') \big) \Big] \, d\mu_{g_z} \, dz.
	\end{align}
	We estimate the terms in \eqref{eq:index.lower.cross.full} one by one. We have, by \eqref{eq:mult.one.h.phi.horizontal.estimates}:
	\begin{align} \label{eq:index.lower.cross.full.1.intermediate}
		& \int_{-\eta}^\eta \int_\Gamma \eps \nabla_{g_z} \omega \cdot \nabla_{g_z} (f(\overline \mbh_\eps' + \eps \mfh \overline \mbi_\eps')) \, d\mu_{g_z} \nonumber \\
		& \qquad = \int_{-\eta}^\eta \int_\Gamma \eps (1 + O(1)z) \nabla_\Gamma \omega \cdot \big[ (1 + O(1) z) (\nabla_\Gamma f) (\overline \mbh_\eps' + \eps \mfh \overline \mbi_\eps') \nonumber \\
		& \qquad \qquad \qquad - \eps^{-1} f (\overline \mbh_\eps'' \nabla_{g_z} h + \eps \mfh \overline \mbi_\eps'' \nabla_{g_z} h - \eps^2 (\nabla_{g_z} \mfh) \overline \mbi_\eps') \big] \, (1 + O(1) z) d\mu_\Gamma \, dz \nonumber \\
		& \qquad = \int_{-\eta}^\eta \int_\Gamma \eps (\nabla_\Gamma \omega \cdot \nabla_\Gamma f) \overline \mbh_\eps' \, d\mu_\Gamma \, dz \nonumber \\
		& \qquad \qquad + o(\eps^2) \int_\Gamma \big[ |\nabla_\Gamma f|^2 + f^2 \big] \, d\mu_\Gamma + o(1) \int_{\cU} \big[ \eps |\nabla \omega|^2 + \eps^{-1} \omega^2 \big] \, d\mu_g.
	\end{align}
	In the last step, we used Cauchy--Schwarz. Note that $\nabla_\Gamma w^\perp \cdot \nabla_\Gamma w^\parallel = g^{ij}_\Gamma \partial_{y_i} w^\parallel \partial_{y_j} w^\perp$, whose two first factors are independent of $z$, and that, since $\omega$ satisfies \eqref{eq:index.lower.wperp} for all $y$:
	\[ \int_{-\eta}^{\eta} (\partial_{y_i} \omega) \overline \mbh_\eps' \, dz	 = \eps^{-1} \int_{-\eta}^\eta (\partial_{y_i} h) \omega \overline \mbh_\eps'' \, dz. \]
	Using this to estimate the first term in \eqref{eq:index.lower.cross.full.1.intermediate} via  \eqref{eq:mult.one.h.phi.horizontal.estimates} and Cauchy--Schwarz, we deduce:
	\begin{align} \label{eq:index.lower.cross.full.1}
		& \int_{-\eta}^\eta \int_\Gamma \eps \nabla_{g_z} \omega \cdot \nabla_{g_z} (f(\overline \mbh_\eps' + \eps \mfh \overline \mbi_\eps')) \, d\mu_{g_z} \nonumber \\
		& \qquad = o(\eps^2) \int_\Gamma \big[ |\nabla_\Gamma f|^2 + f^2 \big] \, d\mu_\Gamma + o(1) \int_{\cU} \big[ \eps |\nabla \omega|^2 + \eps^{-1} \omega^2 \big] \, d\mu_g.
	\end{align}
	By Cauchy--Schwarz again, the boundedness of $H_z$, and exponential decay of $\overline{\mbi}_\eps'$, $\overline{\mbi}_\eps''$:
	\begin{align} \label{eq:index.lower.cross.full.2}
		& \int_{-\eta}^\eta \int_\Gamma H_z \omega f (\eps \mfh \overline{\mbi}_\eps'' + \eps (\partial_z \mfh) \overline{\mbi}_\eps') \, d\mu_{g_z} \, dz \nonumber \\
		& \qquad = o(\eps^2) \int_{\Gamma} f^2 \, d\mu_\Gamma + o(1) \int_{\cU} \eps^{-1} \omega^2 \, d\mu_g.
	\end{align}
	Likewise:
	\begin{align} \label{eq:index.lower.cross.full.3}
		& \int_{-\eta}^\eta \int_\Gamma \eps^{-1} \omega f (2 \eps^2 (\partial_z \mfh) \overline{\mbi}_\eps'' + \eps^3 (\partial_z^2 \mfh) \overline{\mbi}_\eps') \, d\mu_{g_z} \, dz \nonumber \\
		& \qquad = o(\eps^2) \int_{\Gamma} f^2 \, d\mu_\Gamma + o(1) \int_{\cU} \eps^{-1} \omega^2 \, d\mu_g.
	\end{align}
	We are left trying to estimate
	\begin{align} \label{eq:index.lower.cross.full.4}
		& \int_{-\eta}^\eta \int_\Gamma \eps^{-1} \omega f \Big[ - \eps H_z \overline{\mbh}_\eps'' + W''(u)(\overline{\mbh}_\eps' + \eps \mfh \overline{\mbi}_\eps') - \overline{\mbh}_\eps''' - \eps \mfh \overline{\mbi}_\eps''' \Big] \, d\mu_{g_z} \, dz \nonumber \\
		& \qquad = \int_{-\eta}^\eta \int_\Gamma \eps^{-1} \omega f \Big[ - \eps H_z \overline{\mbh}_\eps'' + W''(\overline{\mbh}_\eps)(\overline{\mbh}_\eps' + \eps \mfh \overline{\mbi}_\eps') + W'''(\overline{\mbh}_\eps)(\eps \mfh \overline{\mbi}_\eps + \hat \phi) \overline{\mbh}_\eps' \nonumber \\
		& \qquad \qquad \qquad \qquad \qquad - W''(\overline{\mbh}_\eps) \overline{\mbh}_\eps' - \eps \mfh W''(\overline{\mbh}_\eps) \overline{\mbi}_\eps' - \eps \mfh W'''(\overline{\mbh}_\eps) \overline{\mbh}_\eps' \overline{\mbi}_\eps + 2 \energyunit^{-1} \eps \mfh \overline{\mbh}_\eps'' \nonumber \\
		& \qquad \qquad \qquad \qquad \qquad + O(\eps^2) (\overline{\mbh}_\eps' + |\overline{\mbi}_\eps'|) + O(\eps^3) \Big] \, d\mu_{g_z} \, dz \nonumber \\
		& \qquad = \int_{-\eta}^\eta \int_\Gamma \Big[ O(\eps)(\overline{\mbh}_\eps' + |\overline{\mbi}_\eps'|) + O(\eps^2) \Big] \omega f \, d\mu_{g_z} \, dz \nonumber \\
		& \qquad = o(\eps^2) \int_{\Gamma} f^2 \, d\mu_\Gamma + o(1) \int_{\cU} \eps^{-1} \omega^2 \, d\mu_g.
	\end{align}
	Above, we used $u = \overline{\mbh}_\eps + \eps \mfh \overline{\mbi}_\eps + \hat \phi$ to expand $W''(u)$, \eqref{eq:mult.one.phihat.estimate} to estimate $\hat \phi$; we expanded $H_z = H_\Gamma + O(1) z$ and used \eqref{eq:mult.one.mean.curv.estimates.more.refined} to bound $H_\Gamma - 2 \energyunit^{-1} \mfh$; and, in the last step we used Cauchy--Schwarz. The lemma follows by combining 
	\eqref{eq:index.lower.cross.full}, \eqref{eq:index.lower.cross.full.1.intermediate}, \eqref{eq:index.lower.cross.full.1}, \eqref{eq:index.lower.cross.full.2}, \eqref{eq:index.lower.cross.full.3}, \eqref{eq:index.lower.cross.full.4}.
\end{proof}

\begin{proof}[Proof of Lemma \ref{lemm:index.lower.wperp}]
	This is the same as in \cite[Lemma 5.8]{ChodoshMantoulidis:multiplicity-one}. It is a consequence of the strict stability of $-\frac{d}{dt^2} + W''(\mbh)$ once we work orthogonally to its kernel using \eqref{eq:index.lower.wperp}.
\end{proof}

\section{Proof of Theorem \ref{theo:higher.dim}}

\begin{proof}[Proof of (a)]
	This is a consequence of Theorem \ref{theo:index.upper}.
\end{proof}

\begin{proof}[Proof of (b)]
	If Section \ref{sec:mult.one} applies, then the $C^{2,\alpha}$ convergence follows from  \eqref{eq:mult.one.mean.curv.estimates}, Remark \ref{rema:mult.one.mean.curv.estimates.tau}, and Schauder theory. To that end, it suffices to arrange \eqref{eq:mult.one.assumptions.i}, \eqref{eq:mult.one.assumptions.ii}, \eqref{eq:mult.one.assumptions.iii}. This is done as in \cite[Theorem 3.4]{ChodoshMantoulidis:multiplicity-one}, provided we can arrange for \eqref{eq:mult.one.assumptions.ii} (this is where $n=3$ and stability were used in \cite{ChodoshMantoulidis:multiplicity-one}). If \eqref{eq:mult.one.assumptions.ii} failed for $\eps \to 0$, we could take a sequence of counterexamples $(u_i, \eps_i, \mfh_i)$ satisfying \eqref{eq:mult.one.assumptions.i}, with $\eps_i \to 0$ and $\eps_i |\nabla u_i(p_i)| \to 0$ for some $p_i \in \{ |u_i| < 1-\beta_0 \}$. Passing to a subsequence, $u_i(\eps_i(\cdot - p_i))$ would converge to a solution of $\Delta u = W'(u)$ on $\RR^n$ with $\nabla u(\bm{0}) = \bm{0}$. This solution would also have to have density $1$ at infinity, by virtue of monotonicity. Thus, by \cite{Wang:Allard}, it would have to be a rotation of the heteroclinic solution, which has a nonzero gradient, a contradiction.
\end{proof}

\begin{proof}[Proof of (c)]
	If $\Theta(V, \cdot) \equiv 1$ on $\support \Vert V \Vert$, then $\support \Vert V \Vert$ is smooth by \cite{HutchinsonTonegawa00} and Allard's theorem \cite{Simon83}. Therefore, Section \ref{sec:index.lower} applies and the result follows by Theorem \ref{theo:index.lower}.
\end{proof}

\section{Open questions} \label{sec:open}

Some interesting directions in the variational study of multiplicity-one solutions of \eqref{eq:ac.pde.h} that merit further investigation:

\begin{enumerate}
	\item \textbf{Self-tangencies}. What can be said about the index of $V$ without treating self-tangencies along smooth pieces as parts of the ``fixed'' singular set? Can one devise settings in which self-tangencies do not occur? (cf. \cite{White:generic-transversality}.)
	\item \textbf{Isoperimetric variational problem}. The index and nullities considered in this paper are the variational quantities that one can control through a min-max construction of critical points that fixes $\mfh$. See Remark \ref{rema:isoperimetric.index.h}. However, one may instead wish to fix the enclosed volumes, thus giving up exact control of $\mfh$. See \cite{PacardRitore03, BenciNardullOsorioPiccione}. This alternative setting can be referred to as the \textit{isoperimetric} (i.e., fixed volume) setting. The regularity and asymptotics from Section \ref{sec:mult.one} can apply to the isoperimetric setting too. However, one needs to modify Theorems \ref{theo:index.upper}, \ref{theo:index.lower} to fit into the isoperimetric setting. Modifications of both theorems include subtle points. 
	\item \textbf{Uniqueness}. When $\mfh \equiv 0$, it was shown in \cite{GuaracoMarquesNeves} that multiplicity-one critical points $(u, \eps, 0)$ near nondegenerate minimal surfaces coincide with those constructed by Pacard \cite{Pacard12} and, a posteriori, must also coincide with those in the earlier work of Pacard--Ritore \cite{PacardRitore03}. The proof used the sharp asymptotics derived by Wang--Wei (\cite{WangWei}). Given the sharp asymptotics for the general $\mfh$ setting now obtained in Section \ref{sec:mult.one}, one should be able to prove a corresponding uniqueness theorem. 
\end{enumerate}

\appendix

\section{Derivation of \eqref{eq:mult.one.h.pde} and  \eqref{eq:mult.one.mean.curv.estimates.more.refined}}

\label{app:mult.one.h.pde}

In what follows, \eqref{eq:mult.one.sff.bounds} gets used repeatedly though implicitly when obtaining $O_{1,0,\alpha,\eps}$ bounds.

We project \eqref{eq:mult.one.phi.pde} onto $\Gamma$ by fixing $y \in B_{19}^\Gamma$, dotting with $\overline \mbh_\eps'(y, z)$ and integrating over $z$. We start with the left hand side. We differentiate $\phi \perp \overline \mbh_\eps'$ along $y$ and use \eqref{eq:mult.one.ddt.connection} to get:
\begin{align} \label{eq:h.equation.lhs.1}
	\int_\RR \eps^2 (\Delta_{g_z} \phi) \overline \mbh_\eps' \, dz
		& = \int_\RR \eps^2 (\Delta_\Gamma \phi) \overline \mbh_\eps' \, dz + \int_\RR  \eps^2 (\Delta_{g_z} \phi - \Delta_\Gamma \phi) \overline \mbh_\eps' \, dz  \nonumber \\
		& = - \int_\RR \eps^2 \phi (\Delta_\Gamma \overline \mbh_\eps') \, dz - \int_\RR \eps^2 (\nabla_\Gamma \phi) \cdot (\nabla_\Gamma \overline \mbh_\eps') \, dz \nonumber \\
		& \qquad + \int_\RR  \eps^2 (\Delta_{g_z} \phi - \Delta_\Gamma \phi) \overline \mbh_\eps' \, dz \nonumber \\
		& = \int_\RR \phi (\eps (\Delta_\Gamma h) \overline \mbh_\eps'' - |\nabla_\Gamma h|^2 \overline \mbh_\eps''') \, dz + \int_\RR \eps (\nabla_\Gamma \phi) \cdot (\nabla_\Gamma h) \overline \mbh_\eps'' \, dz  \nonumber \\
		& \qquad + \int_\RR  \eps^2 (\Delta_{g_z} \phi - \Delta_\Gamma \phi) \overline \mbh_\eps' \, dz \nonumber \\
		& = \eps (\Delta_\Gamma h) \int_\RR \phi \overline \mbh_\eps'' \, dz +  \eps \cdot  O_{1,0,\alpha,\eps}(\phi) (O_{1,0,\alpha,\eps}(\nabla_\Gamma h))^2 \nonumber \\
		& \qquad + \eps \cdot O_{1,0,\alpha,\eps}(\eps \nabla_\Gamma \phi)  \cdot O_{1,0,\alpha,\eps}(\nabla_\Gamma h) \nonumber \\
		& \qquad + \eps^2 \cdot O_{1,0,\alpha,\eps}(\eps^2 \nabla^2_\Gamma \phi, \eps \nabla_\Gamma \phi).
\end{align}
Next, integrating by parts yields and using $\phi \perp \overline \mbh_\eps'$ again:
\begin{align} \label{eq:h.equation.lhs.2}
	\int_\RR \eps^2 H_z (\partial_z \phi) \overline \mbh_\eps' \, dz
		& = - \int_\RR \eps H_z \phi \overline \mbh_\eps'' \, dz - \int_\RR \eps^2 (\partial_z H_z) \phi \overline \mbh_\eps' \, dz \nonumber \\
		& = - \int_\RR \eps (H_\Gamma \phi + O(1) z) \overline \mbh_\eps'' \, dz - \int_\RR \eps^2 (\partial_z H_z) \phi \overline \mbh_\eps' \, dz \nonumber \\
		& = - \eps H_\Gamma \int_\RR \phi \overline \mbh_\eps'' \, dz + \eps^3 \cdot O_{1,0,\alpha,\eps}(\phi).
\end{align}
Next, integrating by parts twice yields:
\begin{equation} \label{eq:h.equation.lhs.3}
	\int_\RR \big[ \eps^2 (\partial_z^2 \phi) - W''(\overline \mbh_\eps) \phi \big] \overline \mbh_\eps' \, dz = \eps^3 \cdot O_{1,0,\alpha,\eps}(\phi).
\end{equation}
We move on to the right hand side of \eqref{eq:mult.one.phi.pde}. We have:
\begin{align} \label{eq:h.equation.rhs.1}
	\int_\RR \eps \mfh \overline \mbh_\eps' \, dz 
		& = \int_\RR \eps (\mfh(\cdot, 0) + (\partial_z \mfh)(\cdot, 0) z + O_{C^{1,\alpha}_\eps}(1) z^2) \overline \mbh_\eps' \, dz \nonumber \\
		& = 2 \eps^2 \mfh(\cdot, 0) + 2 \eps^2 (\partial_z \mfh(\cdot, 0)) h + O_{1,0,\alpha,\eps}(\eps^4).
\end{align}
Next:
\begin{equation} \label{eq:h.equation.rhs.2}
	\int_\RR \eps (H_\Gamma - \Delta_\Gamma h) (\overline \mbh_\eps')^2 \, dz = \eps^2 \energyunit (H_\Gamma - \Delta_\Gamma h).
\end{equation}
Next:
\begin{equation} \label{eq:h.equation.rhs.3}
	\int_\RR \eps (|\sff_\Gamma|^2 + \ricc(\bm{n}_\Gamma, \bm{n}_\Gamma)) z \overline \mbh_\eps' \, dz = 2 \eps^2 (|\sff_\Gamma|^2 + \ricc(\bm{n}_\Gamma, \bm{n}_\Gamma)) h.
\end{equation}
For now, we estimate:
\begin{equation} \label{eq:h.equation.rhs.4}
	\int_\RR \tfrac12 W'''(\overline \mbh_\eps) \phi^2 \overline \mbh_\eps' \, dz = \eps \cdot (O_{1,0,\alpha,\eps}(\phi))^2,
\end{equation}
though we will refine this estimate later once we get a more precise form of $\phi$. Finally:
\begin{align} \label{eq:h.equation.rhs.5}
	& \int_\RR \big[ (O_{1,0,\alpha,\eps}(\phi))^3 + O_{1,0,\alpha,\eps}(\eps \nabla_\Gamma^2 h, \nabla_\Gamma h) z \overline \mbh_\eps' + (O_{1,0,\alpha,\eps}(\nabla_\Gamma h))^2 \overline \mbh_\eps'' + O_{1,0,\alpha,\eps}(\eps^3) \big] \overline \mbh_\eps' \, dz \nonumber \\
	& \qquad = \eps \cdot (O_{1,0,\alpha,\eps}(\phi))^3 + \eps^2 \cdot O_{1,0,\alpha,\eps}(\eps \nabla_\Gamma^2 h, \nabla_\Gamma h) + \eps \cdot (O_{1,0,\alpha,\eps}(\nabla_\Gamma h))|^2 + O_{1,0,\alpha,\eps}(\eps^4).
\end{align}
At this point, \eqref{eq:mult.one.h.pde} follows from combining \eqref{eq:h.equation.lhs.1}, \eqref{eq:h.equation.lhs.2}, \eqref{eq:h.equation.lhs.3}, \eqref{eq:h.equation.rhs.1}, \eqref{eq:h.equation.rhs.2}, \eqref{eq:h.equation.rhs.3}, \eqref{eq:h.equation.rhs.4}, \eqref{eq:h.equation.rhs.5}, and finally estimating $h$ by $\phi$ as in \cite[Lemma 9.6]{WangWei}.

Finally, let us assume we have a more refined ansatz for $\phi$, namely:
\[ \phi = \hat \phi + \eps \mfh \overline \mbi_\eps \]
where $\hat \phi = O_{1,0,\alpha,\eps}(\eps^2)$. Then, we can replace \eqref{eq:h.equation.rhs.4} by
\begin{align} \label{eq:h.equation.rhs.4.refined} 
	& \int_\RR \tfrac12 W'''(\overline \mbh_\eps) \phi^2 \overline \mbh_\eps' \, dz \nonumber \\
	& \qquad = \eps^2 \int_\RR \tfrac12 W'''(\overline \mbh_\eps) \mfh^2 \overline \mbi_\eps^2 \overline \mbh_\eps' \, dz + O_{1,0,\alpha,\eps}(\eps^4) \nonumber \\
	& \qquad = \eps^2 \int_\RR \tfrac12 W'''(\overline \mbh_\eps) (\mfh(\cdot, 0) + O_{1,0,\alpha,\eps}(1) z)^2 \overline \mbi_\eps^2 \overline \mbh_\eps' \, dz + O_{C^{0,\alpha}_\eps}(\eps^4) \nonumber \\
	& \qquad = \eps^2 \mfh(\cdot, 0)^2 \int_\RR \tfrac12 W'''(\overline \mbh_\eps) \overline \mbi_\eps^2 \overline \mbh_\eps' \, dz + O_{1,0,\alpha,\eps}(\eps^4) = O_{1,0,\alpha,\eps}(\eps^4),
\end{align}
where in the last step we've used \eqref{eq:mult.one.h.phi.estimates} and the fact that, by parity,
\[ \int_\RR W'''(\mbh) \mbi^2 \mbh' \, dz = 0. \]
Now, \eqref{eq:mult.one.mean.curv.estimates.more.refined} follows from the same equations, with  \eqref{eq:h.equation.rhs.4.refined} replacing \eqref{eq:h.equation.rhs.4}.

\section{Derivation of \eqref{eq:mult.one.h.phi.estimates}, \eqref{eq:mult.one.h.phi.horizontal.estimates}, \eqref{eq:mult.one.phihat.estimate}} 

\label{app:estimate.orthogonal.to.h}

This section is meant to simplify and condense the exposition in \cite[Sections 11-13]{WangWei} by exploiting the multiplicity-one setting. It is borrowed from collaborative notes written with O. Chodosh. In this appendix we will assume, without loss of generality, that $W''(\pm 1) = 2$.

\begin{lemm}\label{lemm:1d-bernstein-linearized-Hprime}
Consider $w \in C^{2}(\RR^{n})$ and $f \in C^{0}(\RR^{n-1})$ so that, for $(y, z) \in \RR^{n-1} \times \RR = \RR^n$,
\[
\Delta_{\RR^{n-1}} w(y,z) + \partial_{z}^{2} w(y,z) - W''(\mbh(z)) w(y,z) = f(y)\mbh'(z).
\]
Then, there is some $c \in C^{2}(\RR^{n-1})$ so that $w=c(y)\mbh'(z)$. 
\end{lemm}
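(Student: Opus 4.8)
The plan is to exploit the strict stability of the one-dimensional linearized operator $L_0 := \tfrac{d^2}{dt^2} - W''(\mbh)$ away from its one-dimensional kernel. Recall that $L_0 \mbh' = 0$ by differentiating \eqref{eq:heteroclinic}, that $\mbh'$ is positive and decays exponentially, and that $L_0$ has no spectrum in a neighborhood $(-\infty, \gamma_0)$ of $0$ except for the simple eigenvalue $0$ with eigenfunction $\mbh'$; this is the classical nondegeneracy of the heteroclinic (see, e.g., the use of it in \cite{WangWei}). The strategy is: for each fixed $y$, decompose $w(y, \cdot)$ into its $\mbh'$-component and a component orthogonal to $\mbh'$ in $L^2(\RR_z)$, show the orthogonal component vanishes, and then read off the regularity of the $\mbh'$-coefficient $c(y)$ from the equation itself.

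First I would set, for each $y$,
\[
c(y) := \frac{\int_\RR w(y,z)\, \mbh'(z)\, dz}{\int_\RR (\mbh'(z))^2 \, dz}, \qquad v(y,z) := w(y,z) - c(y)\, \mbh'(z),
\]
assuming for now that these integrals converge (this needs the growth hypothesis; see the obstacle paragraph). Then $\int_\RR v(y,z)\mbh'(z)\,dz = 0$ for every $y$. Since $L_0 \mbh' = 0$, subtracting $c(y)\mbh'(z)$ from the PDE and noting that $\Delta_{\RR^{n-1}}(c(y)\mbh'(z)) = (\Delta_{\RR^{n-1}} c(y))\mbh'(z)$, we obtain
\[
\Delta_{\RR^{n-1}} v + \partial_z^2 v - W''(\mbh) v = \big(f(y) - \Delta_{\RR^{n-1}} c(y)\big)\,\mbh'(z).
\]
But the right-hand side is a multiple of $\mbh'$, while $v(y,\cdot) \perp \mbh'$ for every $y$; pairing the equation in $z$ against $\mbh'(z)$ and integrating by parts (the boundary terms vanishing by decay) kills the right-hand side entirely, so in fact $v$ solves the homogeneous equation $\Delta_{\RR^{n-1}} v + \partial_z^2 v - W''(\mbh)v = 0$ with $v(y,\cdot)\perp \mbh'$ pointwise in $y$. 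Now I would run a Liouville/Bernstein-type argument: testing this homogeneous equation against $v$ itself over a large ball and using the spectral gap $\int_\RR ((\partial_z v)^2 + W''(\mbh)v^2)\,dz \geq \gamma_0 \int_\RR v^2\,dz$ valid on the $\mbh'$-orthogonal complement, together with a cutoff in the $y$-variable and the assumed polynomial growth of $w$ (hence of $v$), forces $\int v^2 = 0$, i.e., $v \equiv 0$. This is exactly the kind of argument indexed in \cite{WangWei} (and is the $\RR^n$ model version of the estimates quoted in Appendix \ref{app:estimate.orthogonal.to.h}); a clean alternative is a Fourier transform in $y$, reducing to the ODEs $(\partial_z^2 - |\xi|^2 - W''(\mbh))\hat v(\xi,\cdot) = (\text{multiple of }\mbh')$ with $\hat v(\xi,\cdot)\perp\mbh'$, whose only bounded solution is $\hat v(\xi,\cdot)=0$ since $\partial_z^2 - |\xi|^2 - W''(\mbh)$ is invertible on $\mbh'^\perp$ with no bounded kernel for $|\xi|^2 \geq 0$. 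Either way $w = c(y)\mbh'(z)$.

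It remains to see $c \in C^2(\RR^{n-1})$. Once $w = c(y)\mbh'(z)$ is known, plug back into the original PDE: $(\Delta_{\RR^{n-1}} c(y))\mbh'(z) + c(y)\mbh''(z) - W''(\mbh(z))c(y)\mbh'(z) = f(y)\mbh'(z)$, and since $\mbh'' - W''(\mbh)\mbh' = 0$, this reduces to $(\Delta_{\RR^{n-1}} c(y))\mbh'(z) = f(y)\mbh'(z)$, hence $\Delta_{\RR^{n-1}} c = f$ in the distributional sense; but $c$ was defined by integrating the $C^2$ function $w$ against a fixed exponentially decaying weight, so $c$ is already at least $C^2$ by differentiation under the integral sign (alternatively, $c(y) = w(y,z_0)/\mbh'(z_0)$ for any fixed $z_0$, which is manifestly $C^2$). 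This gives the conclusion, and as a byproduct $c$ solves the linear Poisson equation $\Delta c = f$.

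\textbf{Main obstacle.} The delicate point is the \emph{integrability in $z$} used to define $c(y)$ and $v(y,z)$ and to integrate by parts: the statement only assumes $w \in C^2(\RR^n)$ with no decay or growth control stated explicitly in the displayed hypothesis. In the intended application $w$ will have polynomial growth in $z$ (coming from terms like $z\mbh'$, $z^2\mbh''$, etc., via \eqref{eq:mbh.times.polynomial}), which is exactly what makes $\int w\,\mbh'$ converge and the boundary terms vanish; I would either carry such a growth hypothesis explicitly (mirroring how the paper invokes \eqref{eq:mbh.times.polynomial} and \eqref{eq:mbi.times.polynomial} elsewhere) or, if truly only $C^2$ with no growth bound is assumed, replace the pointwise-in-$y$ $L^2_z$ decomposition by the Fourier-in-$y$ route combined with an ODE uniqueness argument that tolerates the mild growth. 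The rest is routine: the spectral gap for $L_0$ on $\mbh'^\perp$ and the Bernstein cutoff are standard and already implicitly used in \cite{WangWei}.
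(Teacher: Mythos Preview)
Your proposal is correct and follows essentially the same route as the paper: decompose $w$ into its $\mbh'$-component $c(y)\mbh'(z)$ plus an $L^2_z$-orthogonal remainder, project the equation onto $\mbh'$ to obtain $\Delta_{\RR^{n-1}} c = f$ and a homogeneous equation for the remainder, then kill the remainder by a Liouville-type argument (the paper defers this last step to \cite[Lemma~3.7]{Pacard12}, which is the same spectral-gap-plus-cutoff argument you sketch). Your obstacle is well spotted---the statement as written omits any growth hypothesis on $w$---but in the only application (the blow-up limit $\hat\psi$ in Lemma~\ref{lemm:Hprime-perp-estimate}) one has $\hat\psi \in L^\infty(\RR^n)$, so the integrals defining $c$ converge and the boundary terms vanish as needed.
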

\begin{proof}
We mimic \cite[Lemma 3.7]{Pacard12}. Write
\[
w(y,z) = c(y) \mbh'(z) + \bar w(y,z)
\]
where $\int_{-\infty}^{\infty} w(z,y) \mbh'(z) dz = 0$ for all $y\in \mathbb{R}^{n-1}$. We thus find that
\[
\mbh'(z) \Delta_{\RR^{n-1}} c(y) + (\partial^{2}_{z} \bar w(y,z) - W''(\mbh(z)) \bar w(y,z) + \Delta_{\RR^{n-1}} \bar w(y,z)) = f(y)\mbh'(z). 
\]
Multiplying by $\mbh'(z)$ and integrating, we find that $\Delta_{\RR^{n-1}} c(y) = f(y)$,
 and so 
\[
\partial^{2}_{z} \bar w(y,z) - W''(\mbh(z)) \bar w(y,z) + \Delta_{\RR^{n-1}} \bar w(y,z) = 0. 
\]
At this point, the proof that $\bar w = 0$ is identical to \cite[Lemma 3.7]{Pacard12}.
\end{proof}

\begin{lemm}\label{lemm:Hprime-perp-estimate}
Fix $\sigma\in(0,1)$. Then, we can choose $L>0$ and $C>0$ depending on $\sigma$, and $K>0$ sufficiently large depending only on $W$ with the following property. Suppose that 
\begin{equation}\label{eq:C0-est-eqn-general-form}
\eps^{2} (\Delta_{\Gamma} \psi + \partial^{2}_{z}\psi) - W''(\overline \mbh_\eps) \psi = \eps f_{1}(y) \overline \mbh_{\eps}'(y,z) + f_{2}(y,z) + \eps D_{i} f_{3}^{(i)}(y,z)
\end{equation}
on $B_{r+2L\eps}^{\Gamma} \times \mathbf{I}_\eps$. Then, for $\eps>0$ sufficiently small, either
\[
 \Vert \psi \Vert_{C^{0}(B_{r}^\Gamma \times \mathbf{I}_{\eps})} \leq  2 \mbh'(0) \energyunit^{-1} \sup_{y\in B^{\Gamma}_{r+2L\eps}} \left| \int_{-\eps K}^{\eps K} \psi(y,z) \overline \mbh'_{\eps}(z-h(y)) \, dz \right|
\]
or
\begin{align*}
	\Vert \psi\Vert_{C^{0}(B_{r}^{\Gamma} \times \mathbf{I}_\eps)} 
	&  \leq \sigma \left( \Vert \psi \Vert_{C^{1}_{\eps}(B_{r+2L\eps}^{\Gamma} \times \mathbf{I}_\eps)} + \Vert f_{1} \Vert_{C^{0,\alpha}_{\eps}(B_{r+2L\eps}^{\Gamma})} \right) \\
	& \qquad + C \left( \Vert f_{2}\Vert_{C^{0}(B_{r+2L\eps}^{\Gamma} \times \mathbf{I}_\eps)}  + \Vert \vec{f}_{3} \Vert_{C^{0,\alpha}_{\eps}(B_{r+2L\eps}^{\Gamma} \times \mathbf{I}_\eps)} + \Vert \psi \Vert_{C^0(B_{r}^{\Gamma} \times \mathbf{J}_{\eps,L})} \right),
\end{align*}
where $\mathbf{J}_{\eps,L}$ denotes the points of $\mathbf{I}_\eps$ that are within $\eps L$ of $\partial \mathbf{I}_\eps$.
\end{lemm}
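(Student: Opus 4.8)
The plan is to prove Lemma~\ref{lemm:Hprime-perp-estimate} by a blow-up/compactness argument, in the same spirit as Wang--Wei \cite[Sections 11--12]{WangWei}. Suppose the conclusion fails for some fixed $\sigma \in (0,1)$: then for every choice of $L$, $C$, $K$ (with $K$ large depending only on $W$) we can find $\eps_j \to 0$ and solutions $\psi_j$, $f_{1,j}$, $f_{2,j}$, $\vec f_{3,j}$ of \eqref{eq:C0-est-eqn-general-form} violating \emph{both} alternatives. Normalize so that $\Vert \psi_j \Vert_{C^0(B_r^\Gamma \times \mathbf{I}_{\eps_j})} = 1$. Failure of the second alternative forces, after rescaling, $\Vert f_{2,j}\Vert_{C^0}$, $\Vert \vec f_{3,j}\Vert_{C^{0,\alpha}_\eps}$, and the boundary-layer norm $\Vert \psi_j\Vert_{C^0(B_r^\Gamma \times \mathbf{J}_{\eps_j,L})}$ all to be $o(1)$, and $\sigma(\Vert\psi_j\Vert_{C^1_\eps} + \Vert f_{1,j}\Vert_{C^{0,\alpha}_\eps}) < 1$, so in particular $\Vert f_{1,j}\Vert_{C^{0,\alpha}_\eps}$ stays bounded; by interior Schauder estimates applied to \eqref{eq:C0-est-eqn-general-form} one also gets uniform $C^{2,\alpha}_\eps$ bounds on $\psi_j$ away from $\partial \mathbf{I}_{\eps_j}$.

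The next step is to pass to the blow-up limit. Rescale the $z$ (and, as needed, $y$) variables by $\eps_j^{-1}$, centering at a point $y_j$ where $|\psi_j|$ is nearly maximized. After passing to a subsequence, $\psi_j$ converges in $C^2_{\loc}$ to a bounded solution $w$ on $\RR^n$ (or a half-space, if the maximizing sequence approaches the lateral boundary $B_r^\Gamma$, but that case is handled by the domain geometry and the parameter $L$) of
\[
\Delta_{\RR^{n-1}} w + \partial_z^2 w - W''(\mbh(z)) w = f(y) \mbh'(z),
\]
where $f = \lim_j f_{1,j}(\eps_j \cdot)$ is the limit of the rescaled $f_{1,j}$; the $f_2$ and $\vec f_3$ terms vanish in the limit because of the $o(1)$ bounds above (the divergence-form term $\eps D_i f_3^{(i)}$ scales favorably). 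By Lemma~\ref{lemm:1d-bernstein-linearized-Hprime}, $w = c(y)\mbh'(z)$ for some $c \in C^2(\RR^{n-1})$. Boundedness of $w$ together with the structure $w = c(y)\mbh'(z)$ and the decay of $\mbh'$ gives $\Vert w\Vert_{C^0} = |c|_{\sup} \, \mbh'(0)$ (the sup of $|\mbh'|$ is $\mbh'(0)$ by the ODE and symmetry of $W$).

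The final step is the contradiction, extracting the first alternative. On one hand $\Vert w \Vert_{C^0} = 1$ by the normalization (the maximum is attained or approached, and the $\mathbf{J}_{\eps,L}$ hypothesis together with the choice of $L$ large prevents the maximum from escaping to $\partial\mathbf{I}_{\eps_j}$). On the other hand, testing \eqref{eq:C0-est-eqn-general-form} against $\overline\mbh_{\eps_j}'$ and integrating in $z$ over $(-\eps_j K, \eps_j K)$ relates $\int \psi_j \overline\mbh_{\eps_j}'\,dz$ to $\eps_j f_{1,j}$ times $\int (\mbh')^2 = \energyunit$, up to errors controlled by $K$ large and the $o(1)$ quantities; in the blow-up limit this says $c(y) = \tfrac12 \energyunit^{-1} \cdot(\text{something})$, or more directly, that $\sup_y |\int_{-\eps_j K}^{\eps_j K}\psi_j \overline\mbh_{\eps_j}' dz|$ is, up to $o(1)$, at least $\energyunit \,\Vert w\Vert_{C^0}/(2\mbh'(0))$. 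This is precisely the negation of the first alternative having failed, a contradiction. I expect the main obstacle to be the bookkeeping that ensures the maximum of $|\psi_j|$ is not lost near the lateral boundary $\partial B_r^\Gamma$ or the top/bottom $\partial\mathbf{I}_{\eps_j}$: this is what forces the introduction of the enlarged domain $B_{r+2L\eps}^\Gamma$, the parameter $L$, the boundary-layer set $\mathbf{J}_{\eps,L}$, and the careful choice of $K$ depending only on $W$ (via the exponential decay rate $\sqrt{W''(\pm1)}$ of $\mbh'$), so that the two alternatives genuinely exhaust all cases.
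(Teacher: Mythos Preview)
Your blow-up strategy matches the paper's, but two points need correction.

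First, the lateral boundary is not handled via half-space limits. The paper instead multiplies $\psi$ by a cutoff $\tilde\chi$ supported in $B^\Gamma_{r+L\eps}$ with $\eps L|\nabla_\Gamma\tilde\chi| + (\eps L)^2|\nabla_\Gamma^2\tilde\chi| = O(1)$; the commutator $[\eps^2\Delta_\Gamma,\tilde\chi]\psi = O(L^{-1})\Vert\psi\Vert_{C^1_\eps}$ gets absorbed into $f_2$, and this is precisely the origin of the $\sigma\Vert\psi\Vert_{C^1_\eps}$ term in the second alternative (take $L$ large depending on $\sigma$). After this reduction $\psi$ is compactly supported in the $y$-variables and there is no lateral boundary to worry about.

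Second, there is a genuine gap: the case where the maximizer $(\bar y_j,\bar z_j)$ satisfies $\eps_j^{-1}|\bar z_j| \to \infty$ is \emph{not} excluded by the $\mathbf{J}_{\eps,L}$ control, because $\mathbf{I}_{\eps_j}$ has width $\sim\eps_j^{1-\delta_*}$ while $\mathbf{J}_{\eps_j,L}$ is only a boundary strip of width $\eps_j L$. In that case the rescaled limit solves $\Delta\hat\psi - W''(\pm 1)\hat\psi = 0$ on $\RR^n$ (the $f_1\mbh'$ right-hand side disappears since $\mbh'(\cdot - \tilde z_j)\to 0$), and since $W''(\pm 1)>0$, a bounded solution attaining an interior maximum must vanish identically, contradicting $\hat\psi(0)=1$. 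In the bounded-$\tilde z_j$ case the contradiction is also more direct than you indicate: from $\hat\psi = c(y)\mbh'(z-\hat z)$ and $\hat\psi(0)=\Vert\hat\psi\Vert_{L^\infty}=1$ one reads off $\hat z=0$, $c(0)=\mbh'(0)^{-1}$, hence $\int_{-K}^K\hat\psi(0,z)\mbh'(z)\,dz = \mbh'(0)^{-1}\energyunit + O(e^{-\sqrt{2}K})$, which for $K$ large (depending only on $W$) contradicts the failure of the first alternative --- no need to test the PDE against $\overline\mbh_\eps'$.
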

\begin{proof}
First, choose $\tilde \chi : B_{r+2L\eps}^{\Gamma}\to[0,1]$ a cutoff function that is $1$ on $B_{r}^{\Gamma}$ and has support in $B_{r+ L\eps}^{\Gamma}$. We can arrange so that $\eps L |\nabla_{\Gamma} \tilde \chi| + \eps^{2}L^{2} |\nabla^{2}_{\Gamma} \tilde \chi|^{2} = O(1)$. Now, by replacing $\psi$ by $\tilde \chi \psi$ and absorbing the resulting error terms into $f_{2}$, it is clear that it suffices to prove that
\begin{align}\label{eq:Hperp-est-cutoff-estimate}
	\Vert \psi\Vert_{C^{0}(B_{r}^{\Gamma} \times \mathbf{I}_\eps)} 
	& \leq \sigma \Vert f_{1} \Vert_{C^{0,\alpha}_{\eps}(B_{r+2L\eps}^{\Gamma} \times \mathbf{I}_\eps)} \nonumber \\
	& \qquad + C \left( \Vert f_{2}\Vert_{C^{0}(B_{r+2L\eps}^{\Gamma} \times \mathbf{I}_\eps)}  + \Vert \vec{f}_{3} \Vert_{C^{0,\alpha}_{\eps}(B_{r+2L\eps}^{\Gamma} \times \mathbf{I}_\eps)} + \Vert \psi \Vert_{C^0(B_r^\Gamma \times \mathbf{J}_{\eps,L})} \right)
\end{align}
assuming that $\psi$ is supported in $B^{\Gamma}_{r+\frac12 L\eps}\times\mathbf{I}_{\eps}$ and satisfies \eqref{eq:C0-est-eqn-general-form} and 
\begin{equation}\label{eq:Hperp-est-psi-Hprime-est}
\sup_{y\in B^{\Gamma}_{r+2L\eps}} \left| \int_{-\eps K}^{\eps K} \psi(y,z) \overline \mbh'_{\eps}(z-h(y)) \, dz \right| < \frac 1 2 \mbh'(0)^{-1} \energyunit  \Vert \psi \Vert_{C^{0}(B^\Gamma_{r} \times \mathbf{I}_{\eps})}.
\end{equation}
Assume, for contradiction, that \eqref{eq:Hperp-est-cutoff-estimate} fails. Then, there are $C,L\to\infty$ as $\eps\to 0$ so that
\begin{align*}
\Vert \psi\Vert_{C^{0}(B_{r}^{\Gamma} \times \mathbf{I}_\eps)} 
& \geq \sigma \Vert f_{1} \Vert_{C^{0,\alpha}_{\eps}(B_{r+2L\eps}^{\Gamma} \times \mathbf{I}_\eps)} \\
& \qquad + C \left(\Vert f_{2}\Vert_{C^{0}(B_{r+2L\eps}^{\Gamma} \times \mathbf{I}_\eps)}  + \Vert \vec{f}_{3} \Vert_{C^{0,\alpha}_{\eps}(B_{r+2L\eps}^{\Gamma} \times \mathbf{I}_\eps)} + \Vert \psi \Vert_{C^0(B_r^\Gamma \times \mathbf{J}_{\eps,L})} \right).
\end{align*}
Choose $\bar x = (\bar y,\bar z) \in \overline{B_{r}^{\Gamma}\times \mathbf{I}_{\eps}}$ attaining $\Vert \psi\Vert_{C^{0}(B_{r}^{\Gamma} \times \mathbf{I}_\eps)}$. Set $\tilde z = \eps^{-1}\bar z$. We first assume that $\tilde z \to \hat z$ as $\eps \to 0$. The case that $\tilde z$ is unbounded as $\eps\to 0$ follows from a similar, but simpler argument, as we describe below. Dividing the equation by $\pm\Vert \psi\Vert_{C^{0}(B_{r}^{\Gamma} \times \mathbf{I}_\eps)}$ and rescaling around $\bar x$ to scale $\eps$ (labeling rescaled quantities with a tilde), we find that $\tilde \psi(0) = 1$, $\Vert \tilde \psi\Vert_{C^{0}(B_{L})} = 1$, 
\[
\Delta_{\tilde \Gamma} \tilde \psi + \partial^{2}_{z}\tilde\psi - W''(\tilde {\overline \mbh}) \tilde \psi =  \tilde f_{1}(y) \tilde{\overline \mbh}'(z-\tilde z-\eps^{-1}\tilde h(y)) + \tilde f_{2}(y,z) +  D_{i} \tilde f_{3}^{(i)}(y,z),
\]
on $B_{L}$, and finally
\[
\Vert \tilde f_{1} \Vert_{C^{0,\alpha}(B_{L})} \leq \sigma^{-1}, \text{ and } \Vert \tilde f_{2}\Vert_{C^{0}(B_{L})}  + \Vert \tilde {f}_{3} \Vert_{C^{0,\alpha}_{\eps}(B_{L})} = o(1).
\]
Hence, $\tilde f_{2}\to 0$ in $C^{0}(B_{L})$ and $\tilde f_{3}^{(i)}\to 0$ in $C^{0,\alpha}(B_{L})$. Moreover, $\tilde f_{1}$ is bounded in $C^{0,\alpha}(B_{L})$. We can thus find $\hat f_{1}\in C^{0,\alpha}(\RR^{n-1})$ so that $\tilde f_{1}\to\hat f_{1}$ in $C^{0,\alpha'}_{\textrm{loc}}(\RR^{n-1})$ for $\alpha' < \alpha$. 

Similarly, by $C^{1,\alpha}$-Schauder estimates  we see that $\tilde \psi$ is uniformly bounded in $C^{1,\alpha}$ on compact subsets of $\RR^{n}$. Thus, there is $\hat \psi\in C^{1,\alpha}_{\textrm{loc}}(\RR^{n})\cap L^{\infty}(\RR^{n})$ so that $\tilde\psi \to\hat \psi$ in $C^{1,\alpha'}_{\textrm{loc}}(\RR^{n})$. Integrating by parts against a test function, we see that $\hat \psi$ weakly solves
\[
\Delta_{\RR^{n-1}} \hat \psi + \partial^{2}_{z} \hat \psi - W''(\mbh(z-\tilde z)) \hat \psi =  \hat f_{1}(y) \mbh'(z-\tilde z). 
\]
Schauder theory implies that $\hat \psi \in C^{2,\alpha}(\RR^{n})$. By Lemma \ref{lemm:1d-bernstein-linearized-Hprime}, we have that $\hat \psi = c(y) \mbh'(z-\hat z)$. Because $\hat \psi(0) = 1 = \Vert \hat\psi\Vert_{L^{\infty}(\RR^{n})}$, we see that $\hat z =0$ and $c(0) = \mbh'(0)^{-1}$. Thus, we see that
\[
\int_{-K}^{K}\hat \psi(0,z) \mbh'(z) dz = \mbh'(0)^{-1} \energyunit + O(e^{-\sqrt{2}K})
\]
Returning to $\psi$, we thus find that
\[
\sup_{y\in B^{\Gamma}_{r+2L\eps}} \left| \int_{-\eps K}^{\eps K} \psi(y,z) \overline \mbh'_{\eps}(z-h(y)) dz \right| \geq \left(\mbh'(0)^{-1} \energyunit + O(e^{-\sqrt{2}K}) + o(1) \right)\Vert \psi \Vert_{C^{0}(B_{r}^\Gamma \times \mathbf{I}_{\eps})}
\]
as $\eps \to 0$. Taking $K$ sufficiently large this contradicts \eqref{eq:Hperp-est-psi-Hprime-est} for $\eps$ sufficiently small.

Finally, if the case that $\tilde z\to\infty$, then repeating the same rescaling as above (but using $\mbh(t)\to \pm 1$ as $t\to\pm\infty$), we find $\hat \psi \in C^{2,\alpha}_{\textrm{loc}}(\RR^{n})\cap L^{\infty}(\RR^{n})$, with $\hat \psi(0) = 1$ and so that
\[
\Delta_{\RR^{n}}\hat \psi - W''(\pm 1) \hat \psi = 0.
\]
Because $\hat \psi$ attains its maximum at $0$, we see that $\hat \psi \equiv 0$, a contradiction.
\end{proof}

We note how the first alternative of Lemma \ref{lemm:Hprime-perp-estimate} can never apply to $\phi$, provided $K$ is chosen sufficiently large. Indeed, it follows from \eqref{eq:mult.one.phi.orthogonal} that
\begin{equation} \label{eq:mult.one.phi.first.alternative}
	\left| \int_{-\eps K}^{\eps K} \phi(y, z) \overline \mbh'_\eps(y, z) \, dz \right| = \left| \int_{\mathbf{I}_{\eps} \setminus [-\eps K, \eps K]} \phi(y, z) \overline \mbh'_\eps(y, z) \, dz \right| \leq C e^{-\sqrt{2}K} \Vert \phi(y,  \cdot) \Vert_{C^0(\mathbf{I}_\eps)}.
\end{equation}
Therefore, for sufficiently large (but fixed) choices of $K$, the second alternative of Lemma \ref{lemm:Hprime-perp-estimate} must always hold when $\psi = \phi$.

Let us use this fact to prove \eqref{eq:mult.one.h.phi.estimates}. We first note that \eqref{eq:mult.one.phi.pde} and \eqref{eq:mbh.times.polynomial} imply
\begin{align} \label{eq:mult.one.phi.pde.Oeps}
	\eps^2 \Delta \phi - W''(\overline \mbh_\eps) \phi 
		& = - \eps(H_\Gamma - \Delta_\Gamma h) \overline{\mbh}_\eps' + O_{1,0,\alpha,\eps}(\eps \mfh, \eps^2) + (O_{1,0,\alpha,\eps}(\phi))^2  \nonumber \\
		& \qquad + (O_{1,0,\alpha,\eps}(\eps \nabla_\Gamma^2 h,   \nabla_\Gamma h))^2 \nonumber \\
		& = - \eps(H_\Gamma - \Delta_\Gamma h) \overline{\mbh}_\eps' + O_{1,0,\alpha,\eps}(\eps) + (O_{1,0,\alpha,\eps}(\eps^2 \nabla^2_\Gamma \phi, \eps \nabla_\Gamma \phi, \phi))^2,
\end{align}
where the second equation follows from the first from our bounds on the prescribed function $\mfh$ our ability to control the height adjustment $h$ in terms of $\phi$ (\cite[Lemma 9.6]{WangWei}).

Fix $\sigma \in (0, 1)$. We apply Lemma \ref{lemm:Hprime-perp-estimate} in $B_{19}^\Gamma \times \mathbf{I}_\eps$ to get a $C^0$ estimate on $\phi$ in $B_{19 - 2 \eps L}^\Gamma \times \mathbf{I}_\eps$ (using \eqref{eq:mult.one.connection.estimates} to treat $\eps^2 (\Delta - \Delta_\Gamma - \partial_z^2) \phi$ as a right hand side term), which can be enlarged to a $C^0$ estimate on $B_{19 - 2 \eps L}^\Gamma \times (-1, 1)$ with at most an $O(\eps)$ error using the decay of $\phi$ off $\Gamma$. Then use Schauder theory on \eqref{eq:mult.one.phi.pde},  \eqref{eq:mult.one.h.pde} and again \cite[Lemma 9.6]{WangWei}, and absorbing the terms that are quadratic in $\phi$ we get:
\begin{multline} \label{eq:mult.one.phi.c2alpha.Oeps.intermediate}
	\Vert \phi \Vert_{C^{2,\alpha}_\eps(B_{19 - 4 \eps L}^\Gamma \times (-1,1))} + \Vert H_\Gamma - \Delta_\Gamma h \Vert_{C^{0,\alpha}_\eps(B^\Gamma_{19-4\eps L})} \\
	\leq \sigma (\Vert \phi \Vert_{C^{2,\alpha}_\eps(B_{19}^\Gamma \times (-1,1))} + \Vert H_\Gamma - \Delta_\Gamma h \Vert_{C^{0,\alpha}_\eps(B^\Gamma_{19})}) + C' \eps,
\end{multline}
for a fixed $C' > 0$. Iterating this procedure on $B_{19 - 4k \eps L}^\Gamma \times \mathbf{I}_\eps$ for $k = 1, \ldots, M |\log \eps|$, where $M$ depends on $\sigma \in (0, 1)$ but not $\eps$, yields the $\phi$ estimate in \eqref{eq:mult.one.h.phi.estimates} and thus also \eqref{eq:mult.one.mean.curv.estimates}.

We move on to verifying   \eqref{eq:mult.one.h.phi.horizontal.estimates}. Differentiating \eqref{eq:mult.one.phi.pde} in the directions parallel to $\Gamma$ (i.e., in $y_i$ in Fermi coordinates) we see similarly to \eqref{eq:mult.one.phi.pde.Oeps} that:
\begin{equation} \label{eq:mult.one.phi.horizontal.pde.Oeps2}
	\eps^2 \Delta (\eps \partial_{y_i} \phi) - W''(\overline \mbh_\eps) (\eps \partial_{y_i} \phi) = - \eps (\eps \partial_{y_i} (H_\Gamma - \Delta_\Gamma h)) \overline{\mbh}_\eps' + \cR \\
\end{equation}
where the error term can be estimated (using \eqref{eq:mult.one.h.phi.estimates}) by:
\[\Vert \cR \Vert_{C^{0,\alpha}_\eps} \leq C \eps^2 + C (\eps^2 \Vert \nabla^2_\Gamma \eps \partial_{y_i} \phi \Vert_{C^{0,\alpha}_\eps} + \eps \Vert \nabla_\Gamma \eps \partial_{y_i} \phi \Vert_{C^{0,\alpha}_\eps} + \Vert \eps \partial_{y_i} \phi \Vert_{C^{0,\alpha}_\eps})^2 
\]
Next, one differentiates \eqref{eq:mult.one.phi.orthogonal} in the horizontal directions to show, similarly as in \eqref{eq:mult.one.phi.first.alternative} but also estimating the error term $\langle \phi, \partial_{y_i} \overline \mbh_\eps' \rangle_{L^2}$, that
\begin{equation} \label{eq:mult.one.phi.horizontal.first.alternative}
	\left| \int_{-\eps K}^{\eps K} \eps (\partial_{y_i} \phi)(y, z) \overline \mbh_\eps'(y, z) \, dz \right| \leq C e^{-\sqrt{2} K} \Vert \eps (\partial_{y_i} \phi)(y, \cdot) \Vert_{C^0(\mathbf{I}_\eps)} + C \eps^3.
\end{equation}
Lemma \ref{lemm:Hprime-perp-estimate}'s first alternative can only hold for $\psi = \eps \partial_{y_i} \phi$, then, in case $\Vert \eps \partial_{y_i} \phi \Vert = O(\eps^3)$ (which is smaller than the worse upper bound we wish to prove, and thus does not break the applicability of our previous strategy).  Arguing as above, using \eqref{eq:mult.one.phi.horizontal.pde.Oeps2} 
instead of \eqref{eq:mult.one.phi.pde.Oeps} 
yields \eqref{eq:mult.one.h.phi.horizontal.estimates}.

Finally, we establish \eqref{eq:mult.one.phihat.estimate}. Recall that, by \eqref{eq:mult.one.mean.curv.estimates} and \eqref{eq:mult.one.phihat.pde}, $\hat \phi = \phi - \eps \mfh \mbi$ satisfies:
\begin{equation} \label{eq:mult.one.phihat.pde.Oeps2}
	\eps^2 \Delta \hat \phi - W''(\overline \mbh_\eps) \hat \phi = O_{C^{0,\alpha}_\eps}(\eps^2),
\end{equation}
The function $\hat \phi$ satisfies an estimate similar to \eqref{eq:mult.one.phi.horizontal.first.alternative}, namely:
\begin{equation} \label{eq:mult.one.phihat.first.alternative}
	\left| \int_{-\eps K}^{\eps K} \hat \phi(y, z) \overline \mbh_\eps'(y, z) \, dz \right| \leq C e^{-\sqrt{2} K} \Vert \hat \phi(y, \cdot) \Vert_{C^0(\mathbf{I}_\eps)} + C \eps^3.
\end{equation}
Thus, as before, Lemma \ref{lemm:Hprime-perp-estimate}'s first alternative can only hold for $\psi = \hat \phi$, then, in case $\Vert \hat \phi \Vert = O(\eps^3)$ (which is smaller than the worse upper bound we wish to prove, and thus does not break the applicability of our previous strategy). The rest of the argument goes through as before, applying \eqref{eq:mult.one.phihat.pde.Oeps2} and  \eqref{eq:mult.one.phihat.first.alternative} instead of \eqref{eq:mult.one.phi.horizontal.pde.Oeps2} and \eqref{eq:mult.one.phi.horizontal.first.alternative}.

\bibliographystyle{plain}
\bibliography{main}

\end{document}